\documentclass[12pt]{article}
\usepackage{latexsym,etex}
\usepackage{comment}
\usepackage{amsmath,amsthm,amsfonts,amssymb,graphicx,epsfig,latexsym,color,amsbsy}
\usepackage{pictex,dcpic}
\usepackage{epsf,enumerate}
\usepackage[title]{appendix}
\usepackage[margin=3.3cm]{geometry}

  \usepackage{tikz}
  \usepackage{tkz-euclide}

\def\asdim{\operatorname{asdim}}
\def\capdim{\operatorname{capdim}}

\newtheorem{thm}{Theorem}[section]
\newtheorem{lemma}[thm]{Lemma}
\newtheorem{cor}[thm]{Corollary}
\newtheorem{prop}[thm]{Proposition}
\newtheorem*{main}{Main Theorem}{}
{}

\numberwithin{figure}{section}

\theoremstyle{remark}
\newtheorem{example}[thm]{Example}

\newtheorem{definition}[thm]{Definition}
\newtheorem{remark}[thm]{Remark}

\newtheorem*{definition*}{Definition}
\newtheorem*{remark*}{Remark}

\bibliographystyle{alpha}

\def\R{{\mathbb R}}

\def\Z{{\mathbb Z}}

\def\S{{\mathcal S}}

\def\A{{\mathcal A}}
\def\S{{\mathcal S}}
\def\B{{\mathcal B}}

\def\int{\operatorname{int}}
\def\vcd{\operatorname{vcd}}

\def\A{{\mathcal A}}

\def\bs{\boldsymbol{\sigma}}

\def\scm{\overset{s}{\rightarrow}}
\def\sscm{\overset{ss}{\rightarrow}}
\def\sfcm{\overset{s}{\twoheadrightarrow}}

\date{July 19, 2018}
\title{On the asymptotic dimension of the curve complex}
\author{Mladen Bestvina and Ken Bromberg\thanks{Both
    authors gratefully acknowledge the support by the National
    Science Foundation under the grant numbers DMS-1308178 and DMS-1207873
    respectively.}}
\begin{document}

\maketitle
\abstract{We give a bound, linear in the complexity of the surface, to
  the asymptotic dimension of the curve complex as well as the
  capacity dimension of the ending lamination space.}
\tableofcontents
\section{Introduction}

Let $\Sigma$ be a closed orientable surface, possibly with punctures.
The curve complex $\mathcal C(\Sigma)$ of $\Sigma$ has played a
fundamental role in recent work on the geometry of mapping class
groups. Its hyperbolicity was established by Masur and Minsky
\cite{MM}, who also introduced many tools used to study its
geometry. In \cite{bf} Bell and Fujiwara used the notion of tight
geodesics of \cite{MM} and a finiteness theorem of Bowditch \cite{bhb}
to prove that $\mathcal C(\Sigma)$ has finite asymptotic
dimension. This fact was then used in \cite{bbf} to show that mapping
class groups have finite asymptotic dimension.

Recall that a metric space $X$ has asymptotic dimension $\leq n$
provided for every $R>0$ there exists a cover of $X$ by uniformly
bounded sets so that every metric $R$-ball in $X$ intersects at most
$n+1$ elements of the cover.

Bowditch's finiteness theorem was nonconstructive and as a result Bell
and Fujiwara were not able to derive any explicit upper bounds on the
asymptotic dimension of $\mathcal C(\Sigma)$. More recently, Richard
Webb \cite{webb} gave a constructive proof of Bowditch's theorem and
gave an explicit upper bound, exponential in the complexity of the
surface, on the asymptotic dimension of $\mathcal C(\Sigma)$.

Asymptotic dimension of any visual $\delta$-hyperbolic space $X$ is closely
related to the topology of its Gromov boundary $\partial X$.  Buyalo
\cite{buyalo} introduced the notion of the {\it capacity dimension} of a
metric space and showed that $\asdim X\leq \capdim \partial X+1$, where
$\partial X$ is equipped with a visual metric. (In the context of this
paper, capacity dimension is the same as the Assouad-Nagata
dimension). Subsequently,
Buyalo-Lebedeva \cite{buyalo-lebedeva} showed that when $X$ is a
hyperbolic group, then equality holds above, and moreover,
$\capdim\partial X=\dim \partial X$.

Klarreich \cite{Kla} identified the boundary of the curve complex with
the space $\mathcal {EL}$ of ending laminations, which is a
subquotient of the space $\mathcal{PML}$ of projective measured laminations.

In his work on the topology of the ending lamination space, Gabai
\cite{gabai} produced upper bounds on the covering dimension of
$\mathcal {EL}$: $\dim\mathcal EL\leq 4g+p-4$ if $\Sigma$ has genus
$g$ and $p>0$ punctures, and $\dim\mathcal EL\leq 4g-5$ if $\Sigma$ is
closed of genus $g$. We also note that the case of the 5 times
punctured sphere was worked out earlier by Hensel and Przytycki
\cite{hp}.

\begin{main}
$\capdim \mathcal{EL}\leq 4g+p-4$ if $p>0$ and $\capdim \mathcal{EL}\leq
  4g-5$ if $p=0$.
\end{main}

\begin{cor}
$\asdim \mathcal C(\Sigma)\leq 4g+p-3$ if $p>0$ and
  $\asdim\mathcal{C}(\Sigma)\leq 4g-4$ if $p=0$.
\end{cor}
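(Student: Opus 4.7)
The plan is to derive the corollary as a short consequence of the Main Theorem combined with two results already recalled in the introduction: Klarreich's identification of $\partial \mathcal C(\Sigma)$ with $\mathcal{EL}$, and Buyalo's inequality $\asdim X \leq \capdim \partial X + 1$ for visual $\delta$-hyperbolic spaces. Since the Main Theorem yields $\capdim \mathcal{EL} \leq 4g+p-4$ in the punctured case and $\leq 4g-5$ in the closed case, adding $1$ gives exactly the desired numerical bounds.

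More concretely, I would proceed as follows. First I would recall that $\mathcal C(\Sigma)$ is $\delta$-hyperbolic by Masur--Minsky, and verify (or cite) that it is visual from a basepoint --- equivalently, that every point lies within bounded distance of a geodesic ray to some point of the Gromov boundary. This is standard for the curve complex: any vertex can be extended to a quasigeodesic ray limiting to an ending lamination, for instance by iterating a pseudo-Anosov, so the visuality hypothesis of Buyalo's theorem applies. Next I would invoke Klarreich's theorem to identify $\partial \mathcal C(\Sigma)$ with $\mathcal{EL}$ equipped with a visual metric, observing that Buyalo's inequality is an inequality of capacity dimensions and is insensitive to the particular choice of visual metric (they are all quasisymmetrically equivalent, and $\capdim$ is a quasisymmetric invariant). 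Finally I would apply the Main Theorem to $\mathcal{EL}$ and add $1$.

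The potentially delicate point, and the main thing to be careful about, is the matching between the metric structures: the Main Theorem bounds $\capdim$ of $\mathcal{EL}$ in the metric natural to Gabai's and Klarreich's work, and one must confirm that this agrees (up to quasisymmetry) with the visual metric on $\partial \mathcal C(\Sigma)$ used in Buyalo's inequality. Once this compatibility is in place, the corollary is a two-line computation:
\[
\asdim \mathcal C(\Sigma) \;\leq\; \capdim \partial \mathcal C(\Sigma) + 1 \;=\; \capdim \mathcal{EL} + 1,
\]
which gives $4g+p-3$ when $p>0$ and $4g-4$ when $p=0$. I do not expect any substantive obstacle beyond this verification; the corollary really is a formal consequence of the Main Theorem together with the quoted results of Buyalo and Klarreich.
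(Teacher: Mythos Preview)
Your proposal is correct and follows essentially the same approach as the paper: the corollary is derived immediately from the Main Theorem via Buyalo's inequality $\asdim X \leq \capdim \partial X + 1$ together with Klarreich's identification $\partial\mathcal C(\Sigma)\cong\mathcal{EL}$, after noting that the curve complex is visual (the paper observes this holds because the isometry group acts coboundedly and there are biinfinite quasi-geodesics). Your worry about metric compatibility is moot here: the paper proves the Main Theorem directly for the visual metric on $\mathcal{EL}$ coming from a basepoint in $\mathcal C(\Sigma)$, so no separate quasisymmetry argument is needed.
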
 

We note that these numbers are very close to the virtual cohomological
dimension $\vcd MCG(\Sigma)$ of the mapping class group, established
by Harer \cite{harer}: if $p=0$ then $\vcd =4g-5$, if $p>0, g>0$ then
$\vcd = 4g+p-4$ and if $g=0$, $p\geq 3$ then $\vcd=p-3$.

Behrstock, Hagen and Sisto \cite{BHS} used the Main Theorem to
establish a quadratic bound on the asymptotic dimension of mapping
class groups. It is an intriguing question whether asymptotic
dimension for these groups is strictly bigger than the virtual cohomological
dimension. There are groups, see e.g. \cite{sapir}, that have finite
cohomological but infinite asymptotic dimension. However, the authors
are not aware of examples where both are finite but not equal.

Our method is to directly construct required covers of $\mathcal {EL}$
via train track neighborhoods in $\mathcal{PML}$. Exactly such a
strategy was employed by Gabai in proving his upper bounds on covering
dimension but we will need to do extra work to gain more metric
control of the covers.  Roughly speaking, train tracks give a cell
structure on $\mathcal{PML}$ and a cell structure has a natural dual
``handle decomposition'' which gives an open cover of the space of
multiplicity bounded by the dimension of the cell structure. By making
the cell structure finer and showing that the multiplicity of the the
cover does not increase in $\mathcal{EL}$ Gabai obtains his upper
bound. Note that cells of small dimension will not contain ending
laminations which is why in both Gabai's work and ours the dimension
bound is smaller than the dimension of $\mathcal{PML}$.

To bound the capacity dimension one needs to find for any sufficiently
small $\epsilon>0$ covers that have bounded multiplicity and where all
elements have diameter bounded above by $\epsilon$ while the Lebesgue
number is bounded below by a fixed fraction of $\epsilon$. This last
property will not be satisfied by family of covers constructed by
Gabai.

The main motivation for this work is an attempt to find an alternative
proof of the finiteness of asymptotic dimension of the curve complex,
one that would generalize to the hyperbolic $Out(F_n)$-complexes and
provide an approach to proving $\asdim Out(F_n)<\infty$. The notion of
tight geodesics, used in the Bell-Fujiwara argument, does not seem to
carry over to the $Out(F_n)$-complexes, and we hope that the ideas in
this paper will
provide a new blueprint for attacking this question.

For readers familiar with train tracks we give a brief sketch of the
construction of the cover which will highlight the difficulties in our
approach. The set of laminations carried by a train track $\sigma$ is
naturally parameterized by a polyhedron $P(\sigma)$ in $\R^n$. (In
what follows we will blur the distinction between a measured
lamination and a projective measured lamination.) Note that $\sigma$
carries both ending laminations and simple closed curves. We denote
the former as $P_\infty(\sigma)$ and the latter as $S(\sigma)$. A
basepoint $*$ in $\mathcal{C}(\Sigma)$ determines visual metric $\rho$
on $\mathcal{EL}$. To estimate the visual diameter of
$P_\infty(\sigma)$ we take the curve $a \in S(\sigma)$ that is closest
to $*$ in $\mathcal{C}(\Sigma)$ and then the diameter of
$P_\infty(\sigma)$ is coarsely $A^{-d(a,*)}$ for some fixed constant
$A$.

To construct our cover we will repeatedly {\em split} train tracks
along large branches. The process of splitting $\sigma$ gives two
train tracks $\sigma_+$ and $\sigma_-$ such that $P(\sigma_+) \cup
P(\sigma_-) = P(\sigma)$ and $P(\sigma_+) \cap P(\sigma_-) = P(\tau)$
where $\tau = \sigma_+ \cap \sigma_-$ is a train track with $P(\tau)$
a co-dimension one face of both $P(\sigma_+)$ and $P(\sigma_-)$. To
start the construction we take a cell structure on $\mathcal{PML}$
determined by a finite collection of train tracks. If the visual
diameter of any of the top dimensional cells is larger than a fixed $\epsilon>0$
then we split. We continue this process and stop splitting a top
dimensional cell only when its diameter is
$\leq \epsilon$.

At any finite stage of this construction we will obtain a cell
structure on all of $\mathcal{PML}$. In particular every simple closed
curve will be carried on some train track. For example one of the cells
must contain the basepoint $*$ and therefore will have large visual
diameter. It immediately follows that we will need to split infinitely
many times to get a collection of cells that have small visual size.

At the end of the construction we will have a countable collection of
train tracks $\sigma_1, \sigma_2, \dots$ each determining a top
dimensional cell. The collection of these cells is locally finite and
covers all filling laminations.
To complete the proof we
will need to establish the following facts:
\begin{itemize}
\item (Lemma \ref{visual size}) All cells $P_\infty(\sigma_i)$ have visual diameter bounded above by $\epsilon$ and bounded below  by a fixed fraction of $\epsilon$.
\item (Proposition \ref{distance}) The cells of dimension less than $\dim \mathcal{PML}$ obtained
  by intersecting $P(\sigma_i)$ also have the form $P(\sigma)$ and if
  $P_\infty(\sigma)$ is nonempty its visual diameter is also bounded
  below by a fraction of $\epsilon$.
\item (Proposition \ref{p:process}) If $a \in S(\sigma_i)$ and $b \in S(\sigma_j)$ are curves that
  are close in $\mathcal{C}(\Sigma)$ then either
\begin{enumerate}[(i)]
\item both $a$ and $b$ are close to a curve in $S(\tau) =
  S(\sigma_i) \cap S(\sigma_j)$ where $\tau$ is a subtrack of both
  $\sigma_i$ and $\sigma_j$ or

\item both $a$ and $b$ are close to the basepoint $*$ (when compared to
  $\max\{d(*,S(\sigma_i)),d(*,S(\sigma_j))\}$).
\end{enumerate}
\end{itemize}
The key to proving the first bullet is the work of Masur-Minsky on
splitting sequences (see Theorem \ref{splitting}). The second bullet
follows from an adaptation of the work of Hamenst\"adt \cite[Lemma
  5.4]{ursula} (see Propositions \ref{ursula++} and
\ref{ursula+++}). The third bullet is the key technical advance of the
paper and is proved using a version of Sela's shortening argument. (See
Lemma \ref{2 faces new}.)

{\bf Plan of the paper.} In Section 2 we consider a subdivision process
on polyhedral cell structures in abstract. In Section 3 we review
train track theory, and prove our main technical result, Lemma \ref{2
  faces new}. In Section 4 we apply this analysis and show that the
visual size of the cover of $\mathcal {FPML}$ we produce is
controlled. In Section 5 we finish the argument by producing the
required ``handle decomposition'' from our cover and checking that it
satisfies the definition of capacity dimension. Finally, in the
appendix we prove a technical result (Corollary \ref{dense ending
  laminations}) about train tracks that is presumably known to the
experts. It was a surprise to us that there are nonorientable train
tracks that carry only orientable laminations, and large birecurrent
train tracks that do not carry filling laminations. These phenomena
are discussed in the appendix.

{\bf Acknowledgements.} The authors thank the Warwick reading group
for their careful reading of the paper and their many helpful
comments: Federica Fanoni, Nicholas Gale, Francesca
Iezzi, Ronja Kuhne, Beatrice
Pozzetti, Saul Schleimer and Katie Vokes. We also thank the referee for very useful
comments.

\section{Good cell structures}

In this section we consider abstract cell structures obtained by
successively subdividing cells in an initial cell structure. 

\subsection{Polytopes}
A {\it polytope} in a finite dimensional vector space $V\cong\R^n$ is
a finite intersection of closed half-spaces.\footnote{Some authors
  require polytopes to be compact. Our polytopes will be cones on
  compact spaces.} The {\it dimension} of a
polytope $U\subset V$ is the dimension of its affine span. A {\it
  face} of $U$ is the intersection $U\cap H$ for a hyperplane
$H\subset V$ such that $U$ is contained in one of the two closed
half-spaces of $H$. The {\it relative interior} of face is its interior as a subspace of $H$. Faces of a polytope are also polytopes, a polytope
has finitely many faces, and a face of a face is a face. The union of
proper faces of a polytope is its boundary, and the complement of the
boundary is the (relative) interior. See \cite{g} or \cite{ziegler}. 
Our main example of a polytope is the set (a cone) $V(\sigma)$ of measured
laminations carried by a train track $\sigma$ on a surface $\Sigma$. 

\subsection{Cell structures}
\begin{definition}
Let $U\subset V$ be a polytope.
A finite collection
$\mathcal C$ of subsets of $U$ which are also polytopes of various
dimensions, called {\it cells}, is a {\it cell structure on $U$} if:
\begin{enumerate}[(C1)]
\item $\bigcup_{C\in\mathcal C} C=U$,
\item when two cells intersect, their intersection is a union of cells,
\item distinct cells have disjoint relative
  interiors,
\item every face of every cell in $\mathcal C$ is a union of cells.
\end{enumerate}
\end{definition}

\begin{remark} We are really thinking about the
  filtration (into skeleta) $U^0\subset
  U^1\subset\cdots\subset U^n=U$ so that the components of
  $U^i-U^{i-1}$ are open $i$-dimensional convex polytopes whose faces
  are subcomplexes.
\end{remark}

\begin{definition}
A cell structure $\mathcal C$ on an $n$-dimensional polytope $U$ is
{\it good} if
\begin{enumerate}
\item [(C5)]for every $i<n$, every $i$-dimensional cell $C\in\mathcal C$ is
  the intersection of $i$-dimensional faces of $>i$-dimensional cells
  in $\mathcal C$ that contain $C$.
\end{enumerate}
\end{definition}

For example, a convex polygon with subdivided edges is not a good cell
structure since (C5) fails. However, starting with a convex polygon
and subdividing by line segments results in a good cell structure. See
Figure \ref{GCS}.

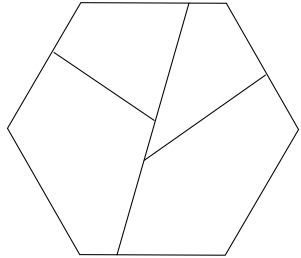
\begin{figure}[h]
\begin{center}
   \begin{tikzpicture}[y=-1cm,scale=0.6]
\sf
\draw[black] (12,12) -- (13.62444,9.21333) -- (12.02444,6.41333) -- (8.8,6.4) -- (7.17556,9.18667) -- (8.77556,11.98667) -- cycle;
\draw[black] (11.2,6.4) -- (9.6,12);
\draw[black] (8.2,7.5) -- (10.43333,9.01556);
\draw[black] (10.2,9.9) -- (12.9,8);

\end{tikzpicture}%

\caption{A hexagon subdivided 3 times results in a good cell
  structure with twelve 0-cells, fifteen 1-cells and four 2-cells.}
\label{GCS}
\end{center}
\end{figure}

\subsection{Subdivision}\label{subdividing}

Let $\mathcal C$ be a good cell structure on a polytope $U$
of dimension $n$
and let $W$ be the intersection of a co-dimension 0 cell
$\Omega\in\mathcal C$ with a hyperplane (thus we are assuming
$\dim\Omega=\dim U=n$). We will assume that the
hyperplane intersects the relative interior of $\Omega$. Construct a
new collection $\mathcal C'$ by ``cutting by $W$''. More precisely,
replace each cell $E\in \mathcal C$ which is contained in $\Omega$ and
with the property that $E-W$ is disconnected by the following three
cells: $E\cap W$ and the closures $E_1,E_2$ of the two complementary
components of $E-W$. Thus $W$ is a co-dimension 1 cell of $\mathcal
C'$. The cells $E_i$ have the same
dimension as $E$, while $\dim (E\cap W)=\dim E-1$. Figure \ref{GCS}
represents 3 consecutive subdivisions of a good cell structure
consisting of a hexagon and its faces.

\begin{lemma}\label{subdivision}
The collection $\mathcal C'$ obtained from a good cell structure
$\mathcal C$ by subdividing is a good cell
structure.
\end{lemma}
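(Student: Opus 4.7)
My plan is to verify the five axioms (C1)--(C5) of a good cell structure for $\mathcal{C}'$ in turn, using the corresponding axioms for $\mathcal{C}$ together with elementary polytope geometry. The first four axioms will be essentially polytopal bookkeeping; the content of the lemma lies in (C5).

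For (C1)--(C4): (C1) is immediate since each $E\in\mathcal{C}$ is the set-theoretic union of the cell(s) of $\mathcal{C}'$ that it spawns. (C3) holds because the relative interiors of $E_1$, $E\cap W$, and $E_2$ partition the relative interior of a cut $E$, while all other cells are unchanged. For (C2) and (C4) I will invoke the polytopal fact that a face of $P\cap H^\pm$ is either $P\cap H$ (the new facet) or a face of $P$ intersected with $H^\pm$; combined with (C2) and (C4) in $\mathcal{C}$, a direct case analysis then shows that intersections and faces of $\mathcal{C}'$-cells decompose as unions of $\mathcal{C}'$-cells.

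The content is (C5). Let $C\in\mathcal{C}'$ be an $i$-cell with $i<n$; I split into three cases based on how $C$ arises. If $C\in\mathcal{C}$ is unchanged (Case A), I apply (C5) in $\mathcal{C}$ to write $C=\bigcap_k G_k$ with each $G_k$ an $i$-face of a cell $D_k\in\mathcal{C}$ of dimension $>i$ containing $C$. For each cut $D_k$, I replace the pair $(G_k,D_k)$ by $(G_k\cap H_W^\epsilon,\, D_k\cap H_W^\epsilon)$ for $\epsilon\in\{+,-\}$ chosen so that $C\subset H_W^\epsilon$ (either side suffices if $C\subset W$); a dimension count using $C\subset G_k\cap H_W^\epsilon$ shows that this piece retains dimension $i$, and it is a polytope face of the new $\mathcal{C}'$-cell $D_k\cap H_W^\epsilon$. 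For unchanged $D_k$, I keep $(G_k,D_k)$ as is. The new intersection equals $C\cap H_W^\epsilon=C$. If $C=E\cap W$ for a cut $E$ of dimension $i+1$ (Case C), then $C=E_1\cap H_W$ is itself a polytope $i$-face of the $(i+1)$-cell $E_1\in\mathcal{C}'$, and the single-term intersection $C=\bigcap\{C\}$ works trivially.

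The subtle case is Case B, with $C=E_\pm$ for a cut $E$ of dimension $i$. I will apply (C5) in $\mathcal{C}$ to $E$ and intersect with $H_W^+$ to get $E_1=\bigcap_k(G_k\cap H_W^+)$, with each piece again of dimension $i$. When $D_k$ is cut, the piece $G_k\cap H_W^+$ is an $i$-face of $D_{k,+}\in\mathcal{C}'$. The main obstacle is when $D_k$ is unchanged in $\mathcal{C}'$ (in particular when $D_k\not\subset\Omega$) but $G_k$ straddles $H_W$, so that $G_k\cap H_W^+$ is not naturally a polytope face of any $\mathcal{C}'$-cell. My remedy is to augment the intersection with $i$-faces of the new $n$-cell $\Omega_+=\Omega\cap H_W^+\in\mathcal{C}'$: since $E\subset\Omega$ forces $E_1\subset\Omega_+$, appropriate supporting-hyperplane faces of $\Omega_+$ contain $E_1$ and, combined with the good pieces from the cut $D_k$'s, carve out $E_1$ exactly. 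The technical point requiring care is the dimension count ensuring every intersectand remains an $i$-face of a $\mathcal{C}'$-cell of dimension $>i$, and this is where the goodness of $\mathcal{C}$ is essential — both in producing the initial expression via (C5) and in ensuring the smallest polytope face of $\Omega_+$ containing $E_1$ interacts correctly with the other terms.
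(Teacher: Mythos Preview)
Your outline is reasonable and parallels the paper's logic, but Case B has a real gap. Your remedy --- ``augment the intersection with $i$-faces of the new $n$-cell $\Omega_+$'' --- does not work in general: the smallest face of $\Omega_+$ containing $E_1$ is $F^*\cap H_W^+$, where $F^*$ is the smallest face of $\Omega$ containing $E$, and nothing in (C5) forces $\dim F^*=i$. When $\dim F^*>i$, the polytope $\Omega_+$ has \emph{no} $i$-face containing $E_1$ at all, so there is nothing to augment with. Likewise, you tacitly assume there are ``good pieces from the cut $D_k$'s'', but it can happen that every $D_k$ in your (C5) expression for $E$ satisfies $D_k\not\subset\Omega$, hence none is cut; then $\bigcap_k G_k'=E$, not $E_1$. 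What you actually need is one term that lies in $H_W^+$, i.e.\ one ambient cell $D_k\subset\Omega$. This is exactly what Lemma~\ref{going up}, applied to the cell structure that $\mathcal C$ induces on $\Omega$, provides: an $(i{+}1)$-cell $F\subset\Omega$ with $E$ contained in a proper (hence $i$-dimensional) face $G$ of $F$. Since $E\subset F\subset\Omega$ and $E-W$ is disconnected, $F$ is cut; then $G\cap H_W^+$ is an $i$-face of $F_+\in\mathcal C'$ with $E_1\subset G\cap H_W^+\subset H_W^+$, and intersecting with the remaining $G_k$'s yields $E_1$.

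The paper organizes the proof differently: rather than constructing an explicit intersection, it lets $D$ be the intersection of \emph{all} $i$-faces of $>i$-cells containing $C'$ and shows $D=C'$ in two steps --- first $D\subset C$ via (C5) for $\mathcal C$, then using a face of $\Omega_1$ (or of $W$, when $C'=C\cap W$) that contains $C'$ but misses $C\setminus C'$. This is more economical than your case-by-case construction, but at the decisive step it is reaching for the same ingredient you are missing: a suitably low-dimensional face on the $H_W^+$ side.
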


\begin{proof}
As in the notation of the definition of subdivision we subdivide a
co-dimension 0 cell $\Omega \in \mathcal C$ by a co-dimension 1 cell
$W$.  We leave it as an exercise to prove that $\mathcal C'$ is a cell
structure and argue only that it is good. We show that an $i$-cell
$C'$ of $\mathcal C'$ ($i<n$) is the intersection of $i$-faces of
$>i$-cells containing $C'$. Let $D$ be this intersection. Note that $D
\supset C'$ so we only need to show that $C'$ is not a proper subset
of $D$.

Let $C \in \mathcal C$ be the smallest cell containing $C'$. Note that
either $\dim C = \dim C' $ (and possibly $C=C'$) or $\dim C = \dim C'
+ 1$. Let $E \in \mathcal C$ be a cell that has a face $F$ that
contains $C$. Then there will be a cell $E' \subset E$ (possibly equal
to $E$) in $\mathcal C'$ with a face $F' \subset F$ and $F' \supset
C'$. By letting $E$ vary over all cells that have faces containing $C$
we see that $D \subset C$. If $C = C'$ we are now done. If not then
$C$ is disconnected by $W$ and in $\mathcal C$ becomes the 3 cells
$C_1, C_2$ and $C \cap W$ with $C'$ being one of these three
cells. Similarly, after subdivision $\Omega$ becomes three cells
$\Omega_1, \Omega_2$ and $\Omega \cap W = W$ with $C_1$ and $C_2$
contained in a faces of $\Omega_1$ and $\Omega_2$ respectively. In
particular if $C' = C_1$ (or $C'=C_2$) then $C$ is contained in a
face of $\Omega_1$ ($\Omega_2$) but that face doesn't contain any
points in $C\backslash C'$ so we must have that $C=D$. If $C' = C
\cap W$ then $C'$ is a face of $W$ but since $W$ doesn't contain any
points in $C \backslash W$ we have that $C=D$ in this case also. 
\end{proof}

\begin{remark}
When $E$ has co-dimension 1 and $U$ is a manifold (e.g. when $U$ is a
polytope), the intersection in (C5) consists of (at most) two
elements. But when the co-dimension is $>1$ the argument does not produce a
uniform bound on the number of faces required.
\end{remark}

\begin{cor}\label{bounded}
Suppose $\mathcal C$ is a good cell structure.
If a cell $E\in\mathcal C$ of dimension $i<n$ has $m$ co-dimension 1
faces, then $E$ can be written as the intersection of $\leq m$
$i$-dimensional faces of cells in $\mathcal C$ of dimension $>i$.
\end{cor}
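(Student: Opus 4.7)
The plan is to refine the existence statement (C5) by using the polytopal structure of $E$ to exhibit, for each codimension $1$ face of $E$, a specific face from the collection furnished by (C5).

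First, I would fix notation. Let $A$ be the affine span of $E$, so $\dim A=i$, and let $E_1,\ldots,E_m$ be the codimension $1$ faces of $E$. Each $E_j$ lies in a unique hyperplane $H_j\subset A$, and $E$ is contained in the closed half-space $A_j\subset A$ bounded by $H_j$. Since $E$ is a polytope with these codimension $1$ faces and nothing else, we have $E=\bigcap_{j=1}^m A_j$. By (C5) we may write $E=\bigcap_\alpha G_\alpha$ where each $G_\alpha$ is an $i$-dimensional face of some cell of $\mathcal C$ of dimension $>i$ that contains $E$. Since $\dim G_\alpha=i=\dim E$ and $E\subseteq G_\alpha$, the affine span of $G_\alpha$ is exactly $A$, so each $G_\alpha$ is an $i$-polytope in $A$ containing $E$.

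The key step is the following claim: for every $j\in\{1,\ldots,m\}$ there is some $\alpha(j)$ such that $G_{\alpha(j)}\subseteq A_j$ and $H_j$ is a supporting hyperplane of $G_{\alpha(j)}$ along a codimension $1$ face. To see this, pick $p$ in the relative interior of $E_j$. For all sufficiently small $r>0$, the ball $B=B_r(p)\subset A$ meets $E$ in exactly $B\cap A_j$, since $p$ lies in the relative interior of $E_j$ and is bounded away from the other codimension $1$ faces of $E$. Because $p\in\partial E$ in $A$, it is impossible for $p$ to lie in the relative interior of every $G_\alpha$ (otherwise a neighborhood of $p$ in $A$ would lie in $\bigcap_\alpha G_\alpha=E$, contradicting $p\in\partial E$). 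Hence there exists $\alpha(j)$ with $p\in\partial G_{\alpha(j)}$, so $p$ lies on a codimension $1$ face of $G_{\alpha(j)}$ supported by some hyperplane $H'\subset A$. Then for small $r$, $B\cap A_j=B\cap E\subseteq B\cap G_{\alpha(j)}$, which is contained in the closed half-space of $B$ bounded by $H'$ containing $G_{\alpha(j)}$. A hemi-ball of $A_j$ centered at $p$ is contained in a hemi-ball bounded by $H'$ centered at $p$ only if $H'=H_j$ and the sides coincide, hence $G_{\alpha(j)}\subseteq A_j$.

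Once the key step is established, the corollary is immediate: choosing one such $\alpha(j)$ for each $j\in\{1,\ldots,m\}$, we have
\[
E\subseteq \bigcap_{j=1}^m G_{\alpha(j)} \subseteq \bigcap_{j=1}^m A_j = E,
\]
so $E=\bigcap_{j=1}^m G_{\alpha(j)}$ exhibits $E$ as the intersection of at most $m$ $i$-dimensional faces of cells of dimension $>i$ (possibly fewer if some of the $G_{\alpha(j)}$ coincide).

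The step I expect to be the main obstacle is the elementary but fiddly convex-geometry lemma that under the inclusion of hemi-balls above, the two supporting hyperplanes at $p$ must actually agree. This is what forces the local matching of constraints and converts the abstract collection given by (C5) into a tight bound by the number of codimension $1$ faces of $E$; nothing else in the argument is more than bookkeeping.
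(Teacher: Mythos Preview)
Your argument is correct and is the natural elaboration of (C5); the paper states the corollary without proof, treating it as immediate, and what you have written is exactly the standard polytope argument one would supply. The step you flag as fiddly can be shortened: once you know $p\in\partial G_{\alpha(j)}$, any supporting hyperplane $H'$ of $G_{\alpha(j)}$ through $p$ is also a supporting hyperplane of $E\subseteq G_{\alpha(j)}$, so $E\cap H'$ is a proper face of $E$ containing the relative interior point $p$ of $E_j$, forcing $E\cap H'=E_j$ and hence $H'=H_j$, $G_{\alpha(j)}\subseteq A_j$---no hemi-ball comparison needed.
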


\begin{definition}
A (finite or infinite) sequence $\mathcal C_0,\mathcal C_1,\cdots$ of
cell structures on $U$ is {\it excellent} if:
\begin{enumerate}
\item [(E1)] $\mathcal C_0$ consists of $U_p$'s and their faces,
\item [(E2)] for $i\geq 1$, $\mathcal C_i$ is obtained from $\mathcal
  C_{i-1}$ by the subdivision process along co-dimension 0 cells
  described above, or else $\mathcal C_i=\mathcal C_{i-1}$.
\end{enumerate}
\end{definition}
By Lemma \ref{subdivision}, the cell structures in an excellent
sequence are good cell structures.

\begin{remark}
Easy examples in $\R^3$ show that it is {\it not} true in general that
an $i$-cell is the intersection of $i$-faces of co-dimension 0
cells. E.g. consider the plane $x=0$ and half-planes $z=0,x\geq 0$ and
$y=0,x\leq 0$.
\end{remark}

\begin{remark}
This lemma is where our cell structure differs from Gabai's. For our
cell structure we only subdivide cells of positive co-dimension if they
are induced by subdivisions of top dimensional cells. The proposition
insures that when doing this all cells are defined via train tracks
(i.e. they are of the form $V(\theta)$
where $\theta$ is a train track, see Proposition \ref{ursula++}). Gabai
also needs this property but he achieves it by subdividing cells of
positive co-dimension.  We do not want to do this as the visual
diameter of these cells may become arbitrarily small. See Figure
\ref{codim}.
\end{remark}

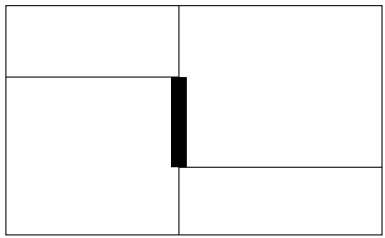
\begin{figure}[h]
  \begin{center}
    \begin{tikzpicture}[y=-1cm,scale=0.5]
\sf
\draw[black] (8.3,8.2) rectangle (18.3,14.3);
\draw[black] (8.3,10.1) -- (12.9,10.1);
\draw[black] (12.9,12.5) -- (18.3,12.5);
\draw[black] (12.9,8.2) -- (12.9,14.3);
\draw[line width=6,black] (12.9,10.1) -- (12.9,12.5);

\end{tikzpicture}%

\caption{The cell drawn in thick line arises as the intersection of
  top dimensional cells. We do not want to subdivide it further as
  this would make the visual size too small.}
\label{codim}
\end{center}
\end{figure}

\section{Train tracks}

\subsection{Notation and background}

Fix a surface $\Sigma$ of finite type. In what follows all constants will depend on the topology of $\Sigma$.
We will assume the reader is familiar with the theory of train tracks.
The standard reference is \cite{ph}. See also \cite{MM} and
\cite{ursula} for introductions to the theory. A quick definition is
that a train track in a surface $\Sigma$ is a smooth graph with a
well-defined tangent line at every point, including at the vertices,
so that no complementary component is a (smooth) disk, a monogon, a
bigon, or a punctured disk, and so that every edge can be extended in
both directions to a smoothly immersed path (these are called legal
paths or train paths). All our train tracks will always be generic (i.e. all
vertices have valence 3) and in general they will be recurrent and transversely recurrent
(birecurrent). However, there will be occasions when non-recurrent tracks will appear. A train track $\sigma\subset\Sigma$ is {\it large} if
each complementary component is homeomorphic to a disk or a once
punctured disk. A train track is {\it maximal} if all complementary
components are triangles or punctured monogons with the exception of
the punctured torus where a maximal train track contains a single
punctured bigon in its complement.

\subsubsection{Transverse measures}
The edges of the train track are {\it branches} and the vertices are
{\it switches}.  At each switch of a generic train track
$\sigma\subset \Sigma$ there are three incident half branches. Two of
these are tangent (i.e. determine the same unit tangent vector) and
are called {\it small}, while the third is a {\it large}
half branch. A branch whose both half branches are large is called
{\it large}. If both half branches are small then the branch is {\it
  small}. Otherwise the branch is {\it mixed}.

A {\it transverse measure} on a (generic) train track is an assignment
of non-negative weights to each branch that satisfy the {\it
  switch equations}. That is, at each switch the sum of the weights of
the two small half branches should be equal to the weight of the large
half branch. A transverse measure determines a unique measured
lamination on $\Sigma$. These are the laminations {\it carried} by
$\tau$.

A train track is {\it recurrent} if it admits a transverse measure
which is positive on every branch. All of our train tracks are going
to be {\it transversally recurrent} -- see \cite{ph} for the
definition. We will not use this property directly, but most results
in the literature assume it, and further there is no harm doing so as
transverse recurrence persists under splits and subtrack moves. A
train track is {\it birecurrent} if it is both recurrent and
transversally recurrent.

The set of all measured laminations on $\Sigma$ is denoted $\mathcal
{ML}$ and the set of measured laminations carried by $\sigma$ is denoted
$V(\sigma)$. Thus $V(\sigma)$ is the closed positive cone in the
vector space of real weights on the branches of $\sigma$ satisfying
the (linear) switch equations; in particular, $V(\sigma)$ is a polytope. We
denote by $\mathcal {PML}$ the projective space of measured
laminations and for a train track $\sigma$ we let
$P(\sigma)\subset\mathcal {PML}$ be the set of projective measured
laminations carried by $\sigma$. Then $P(\sigma)$ can be identified
with the projectivization of $V(\sigma)-\{0\}$. We will often blur the
distinction between a measured lamination and its projective class. 

We also denote by $\mathcal{FPML}\subset \mathcal{PML}$ the subset of
those laminations that are {\it filling}, i.e. whose complementary
components are disks or punctured disks. Given a measured lamination
$\lambda \in \mathcal{ML}$ (or $\mathcal{PML})$ we let $[\lambda]$ be
the underlying geodesic lamination.

We have a quotient map $\mathcal{FPML}\to \mathcal{EL}$ to the space
of {\it ending laminations} defined by $\lambda \mapsto
[\lambda]$. Recall that Klarreich \cite{Kla} showed that
$\mathcal{EL}$ is the Gromov boundary of the curve complex $\mathcal
C(\Sigma)$. Note that in general if $\lambda_i \in \mathcal{FPML}$ is
a sequence with limit $\lambda$ then $[\lambda]$ may be a proper
subset of the Hausdorff limit of $[\lambda_i]$.

For a train track $\sigma$ let
$P_\infty(\sigma)=P(\sigma)\cap\mathcal{FPML}$. 

At each switch the tangent direction gives a way to compare the
orientation of each branch adjacent to the switch. A train track is
{\em orientable} if each branch can be given an orientation that is
consistent at each switch.

When $\sigma$ is a generic birecurrent train track we have
$|b|/3=|v|/2=-\chi(\sigma)$, where $|b|,|v|$ denote the numbers of
branches and switches respectively.

\begin{lemma}[{\cite[Lemma 2.1.1]{ph}}]\label{dimension count}
Let $\sigma$ be a connected recurrent train track. Then the dimension of
$V(\sigma)$ is $|b|/3$ if $\sigma$ is non-orientable and $|b|/3 + 1$
if $\sigma$ is orientable.
\end{lemma}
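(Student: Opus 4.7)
The plan is to view $V(\sigma)$ as the intersection of the nonnegative cone in $\R^{|b|}$ with the kernel of the $|v|\times|b|$ linear system $M$ of switch equations. Since $\sigma$ is recurrent, a strictly positive transverse measure exists, so $V(\sigma)$ has nonempty interior inside this linear kernel and $\dim V(\sigma) = |b| - \rank(M)$. Using $|v|=2|b|/3$, this rewrites as $\dim V(\sigma) = |b|/3 + \dim\ker(M^T)$, so it suffices to show that $\ker(M^T)$ is one-dimensional when $\sigma$ is orientable and trivial when it is not.

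For each switch $v$ and each branch $e$ incident to $v$, let $\epsilon_v(e)\in\{+1,-1\}$ be $+1$ when the half-branch of $e$ at $v$ is small and $-1$ when it is large, so that the switch equation at $v$ reads $\sum_{e\ni v}\epsilon_v(e)\,w_e=0$. A vector $c\in\R^{|v|}$ lies in $\ker(M^T)$ exactly when, for every branch $e$ with endpoints $v_1,v_2$, one has $c_{v_1}\epsilon_{v_1}(e)+c_{v_2}\epsilon_{v_2}(e)=0$. Case analysis on whether $e$ is small, large, or mixed shows that this relation forces $c_{v_2}=-c_{v_1}$ across small and large branches (where the two $\epsilon$'s agree) and $c_{v_2}=c_{v_1}$ across mixed branches (where they differ). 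In particular, $|c_v|$ is constant on the connected train track $\sigma$, so the space of dependencies is at most one-dimensional, and any nontrivial dependency normalizes to a sign function $s\colon V\to\{\pm1\}$ obeying these flip/preserve rules.

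The remaining step is to identify the existence of such a sign function with orientability. At each switch set up local coordinates with the tangent line horizontal, the large half-branch on one side and the two small half-branches on the other. A consistent local orientation is determined by a choice of direction along the tangent line; this splits switches into two types depending on whether the large half-branch points out of or into the switch. Following a branch $e$ from $v_1$ to $v_2$, smoothness of the orientation forces the type to flip when both endpoints of $e$ are on the same side (i.e.\ $e$ is a small or large branch) and to be preserved when $e$ is mixed. Thus a consistent global orientation of $\sigma$ is exactly a nontrivial assignment $s\colon V\to\{\pm1\}$ satisfying the propagation rules of the previous paragraph, and the existence of such an $s$ is equivalent to the existence of a nontrivial element of $\ker(M^T)$. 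Therefore $\dim\ker(M^T)=1$ in the orientable case (yielding $\dim V(\sigma)=|b|/3+1$) and $\dim\ker(M^T)=0$ otherwise (yielding $\dim V(\sigma)=|b|/3$).

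The main obstacle will be executing the third paragraph cleanly: the sign conventions used to write the switch equation must be matched, at each switch, with the local description of a smoothly consistent orientation. I would handle this by writing everything out in explicit local coordinates as sketched. Everything else is elementary linear algebra on a connected graph.
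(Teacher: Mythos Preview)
Your argument is correct and takes a different route from the paper's sketch. The paper computes the rank of the switch-equation matrix by producing explicit weight vectors that witness independence: in the nonorientable case, for each switch $v$ it invokes a closed train path beginning and ending at $v$ along the two small half-branches there, giving weights that satisfy every switch equation except the one at $v$; in the orientable case it observes that the sum of all oriented switch equations vanishes identically, so one equation is redundant, and then for any two switches $v,w$ uses a train path from $v$ to $w$ to show the remaining $|v|-1$ equations are independent. You instead compute the rank via the left kernel $\ker(M^T)$ and identify its nonzero elements directly with global orientations of $\sigma$. Your approach has the advantage of avoiding the existence claims about train paths that the paper leaves unproved in its sketch (and which do require a short argument using recurrence and nonorientability), and the identification of $\ker(M^T)$ with the space of orientations is clean and conceptual; the paper's approach, by contrast, ties the dimension count to the geometry of legal paths, which is closer to how train tracks are used elsewhere in the paper. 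One small notational point in your write-up: $\epsilon_v(e)$ is ambiguous when $e$ is a loop at $v$, so the cokernel condition is better phrased in terms of half-branches, though the argument goes through unchanged.
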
 

\begin{proof}[Sketch of the proof]
Suppose first that $\sigma$ is nonorientable. Given a switch $v$, there is
a train path that starts and ends at $v$, and the initial and terminal
half branches are the two small half branches at $v$. This path
assigns weights to the branches of $\sigma$ that satisfy all switch
equations except at $v$. This shows that the switch equations are
linearly independent, proving the assertion. 

Now suppose $\sigma$ is orientable. Choose an orientation and write
each switch equation as the sum of incoming branch(es) equals the sum of
outgoing branch(es). Then summing all switch equations yields an
identity, with each branch occurring once on both sides. Thus one
switch equation is redundant, and we need to argue that the others are
independent. Let $v,w$ be two distinct switches. Choose a train path
that connects $v$ to $w$. This path assigns weights to all edges, and
the switch equations are satisfied except at $v$ and $w$. This proves
the claim.
\end{proof}

\subsubsection{Faces of $V(\sigma)$}
There is a bijection between faces of $V(\sigma)$ and recurrent
subtracks of $\sigma$. (Here we allow train tracks to be disconnected
and to contain components that are simple closed curves.) A subtrack
of $\sigma$ may not be recurrent but any track has a unique
maximal recurrent subtrack.

\subsubsection{Splitting} 
Starting with a maximal, birecurrent train track $\sigma$ we will describe a splitting operation on train tracks that will us to subdivide $V(\sigma)$ and produce an excellent sequence of cell structures on $V(\sigma)$. We describe this now.

If $b$ is a large
branch of $\sigma$, one can produce two new train tracks
$\sigma_1,\sigma_2$ by {\it splitting} $b$. See Figure \ref{split}. We
say that $\sigma_1$ is obtained by the {\it left split} and $\sigma_2$
by the {\it right split}. 

\begin{figure}[h]
\begin{center}
   \begin{tikzpicture}[y=-1cm,scale=0.5]
\sf
\draw[black] (6.16667,8.1) -- (6.16667,14.2);
\draw[black] (7.56667,8.1) -- (7.56667,14.2);
\draw[black] (6.18,8.1) .. controls (5.90444,7.34667) .. (5,6.61333);
\draw[black] (6.17556,14.17778) .. controls (5.9,14.93111) .. (4.99556,15.66444);
\draw[black] (7.59111,8.13556) .. controls (7.86667,7.38222) .. (8.77111,6.64889);
\draw[black] (7.59111,14.19778) .. controls (7.86667,14.95111) .. (8.77111,15.68444);
\draw[black] (19.92889,8.1) -- (19.92889,14.2);
\draw[black] (21.32889,8.1) -- (21.32889,14.2);
\draw[black] (19.94222,8.1) .. controls (19.66667,7.34667) .. (18.76222,6.61333);
\draw[black] (19.93778,14.17778) .. controls (19.66222,14.93111) .. (18.75778,15.66444);
\draw[black] (21.35333,8.13556) .. controls (21.62889,7.38222) .. (22.53333,6.64889);
\draw[black] (21.35333,14.19778) .. controls (21.62889,14.95111) .. (22.53333,15.68444);
\draw[black] (13.4,8.1) -- (13.4,14.2);
\draw[black] (13.4,8.1) .. controls (13.12444,7.34667) .. (12.22,6.61333);
\draw[black] (13.39778,7.93333) .. controls (13.67333,7.18) .. (14.57778,6.44667);
\draw[black] (13.4,14.17778) .. controls (13.12444,14.93111) .. (12.22,15.66444);
\draw[black] (13.39778,14.30667) .. controls (13.67333,15.06) .. (14.57778,15.79333);
\draw[black] (6.16667,8.73333) -- (6.43333,10.2) -- (7.4,12.43333) -- (7.53333,13.26667);
\draw[black] (21.32222,8.73333) -- (21.05556,10.2) -- (20.08889,12.43333) -- (19.95556,13.26667);
\path (13.6,10.8) node[text=black,anchor=base west] {\fontsize{16.0}{19.2}\selectfont{}$e$};
\path (6.3,15.9) node[text=black,anchor=base west] {\fontsize{16.0}{19.2}\selectfont{}$\sigma_1$};
\path (20.1,15.9) node[text=black,anchor=base west] {\fontsize{16.0}{19.2}\selectfont{}$\sigma_2$};
\path (12.8,15.9) node[text=black,anchor=base west] {\fontsize{16.0}{19.2}\selectfont{}$\sigma$};

\end{tikzpicture}%

\caption{A large branch $e$ in the middle is split in two ways to give
  train tracks $\sigma_1$ and $\sigma_2$.}
\label{split}
\end{center}
\end{figure}
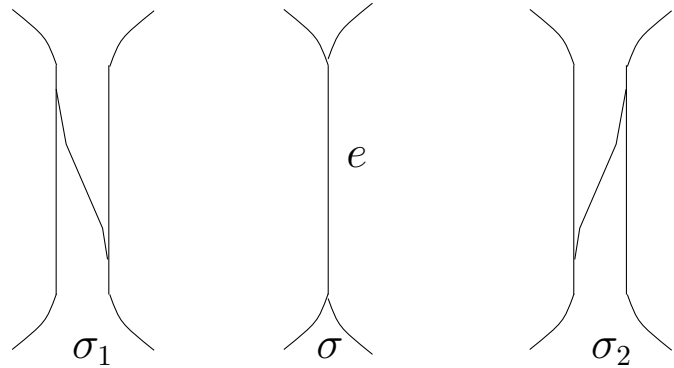

Every lamination that is carried by $\sigma$ will be carried by either
$\sigma_1$ or $\sigma_2$. If a lamination is carried by both
$\sigma_1$ and $\sigma_2$ then it will be carried by the {\em central
  split} $\tau = \sigma_1 \cap \sigma_2$, obtained from either
$\sigma_1$ or $\sigma_2$ by removing the diagonally drawn branch. 

We have the following facts:
\begin{itemize}
\item (\cite[Lemma 1.3.3(b)]{ph}) If $\sigma$ is transversely
  recurrent, so are $\sigma_1,\sigma_2$ and $\tau$.
\item (\cite[Lemma 2.1.3]{ph}) If $\sigma$ is recurrent, then either
  all three of $\sigma_1,\sigma_2,\tau$ are recurrent, or exactly one
  is recurrent.
\end{itemize}

It is also easy to see that $\sigma_1,\sigma_2$ are orientable if and
only if $\sigma$ is.

\subsubsection{Subdivision}

Now suppose $\sigma$ is a birecurrent train track and $b$ a large
branch of $\sigma$. We describe a process that subdivides
$V(\sigma)$. There are several cases. Denote by
$\sigma_1,\sigma_2,\tau$ the left, right, and central splits of $\sigma$
along $b$.

\begin{enumerate}[(S1)]
\item If all three of $\sigma_1,\sigma_2,\tau$ are recurrent, the cell
  $V(\tau)$ is a co-dimension 1 hyperplane in $V(\sigma)$ and cuts it
  into $V(\sigma_1)$ and $V(\sigma_2)$. Thus $\dim V(\sigma)=\dim
  V(\sigma_i)=\dim V(\tau)+1$. In this case we are subdividing $V(\sigma)$ as in Section \ref{subdividing}.
\item If $\sigma_1$ is recurrent but $\sigma_2$ and $\tau$ are not
  recurrent then

  $V(\sigma)=V(\sigma_1)$ while $V(\sigma_2)=V(\tau)$ will be a proper
  face of $V(\sigma)$ (possibly empty).
\item Suppose $\tau$ is recurrent, but $\sigma_1,\sigma_2$ are not. Then
  $\tau$ is the maximal recurrent subtrack of both $\sigma_i$ and
  $V(\sigma)=V(\sigma_i)=V(\tau)$. 
Since $\dim V(\sigma)=\dim V(\tau)$ Lemma \ref{dimension
  count} implies that $\sigma$ is nonorientable while $\tau$ is
  orientable. Note that
  if this case occurs every lamination carried by $\sigma$ is orientable. It
  may also happen that $\sigma$ is large while $\tau$ is not, so we
  have a situation that a large birecurrent train track does not carry
  any filling laminations. 
\end{enumerate}

\subsection{Carrying maps, stationary and active sets} 
If $\sigma$ and $\tau$ are train tracks then a map $\sigma\to\tau$ is
a {\em carrying map} if it is locally injective on each edge and takes
legal train paths to legal train paths. We also say $\sigma$ is {\em
  carried} by $\tau$ and we are implicitly assuming some explicit carrying map has been chosen.
We say that a carrying map $\sigma\to\tau$ is
   {\it fully carrying} if it is a homotopy equivalence, and we then
   write $\sigma\twoheadrightarrow \tau$. If $\lambda$ is a 
   lamination carried by $\tau$, we write $\lambda\to\tau$ for the
   carrying map. If moreover this map induces a bijection between
   complementary components that preserves the topology and numbers of sides and
   punctures, we say that $\tau$ {\it fully carries} $\lambda$ and we
   write $\lambda\twoheadrightarrow \tau$. Thus in this case splitting
   $\tau$ according to $\lambda$ always produces train tracks that
   fully carry $\lambda$.

Our definition of a track fully carrying a lamination is stronger than what is used in \cite{gabai} where it is only assumed that any realization of $\lambda$ as a measured lamination will be in the relative interior of $V(\tau)$. 

If $\sigma_1$ is a splitting of $\sigma$ there is a unique (up to
homotopy rel vertices) full carrying map $\sigma_1 \twoheadrightarrow
\sigma$ that is a bijection on vertices and is a homeomorphism outside
a small neighborhood of the large branch where the split
occurs.
If $\sigma_1$ is obtained
from a finite sequence of splittings of $\sigma$ we will always assume
that the carrying map $\sigma_1 \twoheadrightarrow \sigma$ is a
composition of such maps.

If $\tau$ is obtained from $\sigma$ by some finite combination of
splits and central splits we write $\tau \scm
\sigma$. If $\tau$ is obtained by a finite sequences of splits only
then $\tau$ is fully carried by $\sigma$ and we write $\tau \sfcm
\sigma$.
   
We also use the notation $\sigma\sscm\tau$ to mean that $\sigma$ is
obtained from $\tau$ by a sequence of splits, central splits, and
passing to subtracks. A single {\em move} is either a split, a central
split or passing to a subtrack.
The number of {\em splitting moves} in $\sigma\sscm\tau$ is
the number of splits and central splits in the
sequence. 
When we write $\sigma\sscm\tau$ we will be
implicitly assuming that some sequence of splits and subtracks has been
chosen. However, the choice of a sequence is not unique and different
choices of sequences may have a different number of moves.

Given two sequences $\sigma_1\sscm\tau$ and $\sigma_2\sscm\tau$ we would like to find a new train track $\sigma$ with $\sigma\sscm\sigma_i$ for $i=1,2$ and $V(\sigma) = V(\sigma_1)\cap V(\sigma_2)$. To accomplish this we need to develop some machinery about train tracks. The main technical result we need is Proposition \ref{ursula++}.

Given a sequence of $\sigma\sscm\tau$ we now want to define the set of
active and stationary branches. To do so we first make some general
comments about sets of branches and half branches and their
complements. Let $\S$ be a collection of branches and half branches of
a train track $\tau$ such that if $\S$ contains a branch then it
contains both half branches, and if it contains a half branch then it
contains both other half branches at the same switch.
  Then the {\em complementary branch set} $\A$
contains a branch $b$ if neither $b$ nor any of its half branches are
in $\S$ and contains a half branch $h$ if $h$ is not in $\S$. Note
that $\A$ will also have the property that if a branch is in $\A$ then
both half branches will be in $\A$ but will also have the stronger
property that if $\A$ contains both half branches of a branch then it
will contain the branch. We also note that $\S\cup\A$ may not contain
all branches of $\tau$ but it will contain all half branches.
Let $|\S|$ be the union
of branches and half branches in $\S$. We think of half branches as
 germs, so if both half branches of a branch $b$ are in $\S$ but 
 $b$ is not in $\S$ then $|\S|$ will be missing an interval in the interior of $b$.

 A convenient way to visualize the set $\S$ is to view the train track
 $\tau$ as a graph. Then switches with incident half branches in $\S$
 correspond to some vertices, and branches in $\S$ to some edges in
 $\tau$. These vertices and edges define a subgraph $\tau_S$ of $\tau$. The
 complementary set $\A$ similarly corresponds to the maximal subgraph
 of $\tau$
 disjoint from $\tau_S$. 

Given train tracks $\sigma$ and $\tau$ with $\sigma\to\tau$ a branch
$b$ in $\sigma$ is {\em stationary} if the carrying map is a
homeomorphism from a neighborhood of $b$ to its image in $\tau$. We
similarly define a half branch to be stationary and let
$\S(\sigma\sscm\tau;\sigma)$ the set of stationary branches and half
branches in $\sigma$. Note that a half branch is contained in
$\S(\sigma\sscm\tau;\sigma)$ if and only if the carrying map is a
homeomorphism on a neighborhood of the switch adjacent to the half
branch to its image. We emphasize that the stationary set depends on
the choice of carrying map and two homotopic carrying maps may have
different stationary sets. In particular a choice of sequence
$\sigma\sscm\tau$ determines the carrying map and hence the stationary
set but a different choice of sequence may determine a different
stationary set.

The image of the stationary set in $\tau$ will be a collection of branches and half branches that we denote $\S(\sigma\to\tau;\tau)$. The carrying map $\sigma\to\tau$ factors through a train track $\tau'$ if $\sigma\to\tau$ is the composition of carrying maps $\sigma\to\tau'$ and $\tau'\to\tau$ and we define $\S(\sigma\to\tau;\tau')$ to be the image of $\S(\sigma\to\tau;\sigma)$ in $\tau'$.
The main example for us is when we have a sequence $\sigma\sscm\tau$ and $\tau'$ is a track in the sequence.

The carrying map will restrict to a homeomorphism from $|\S(\sigma\to\tau;\sigma)|$ to $|\S(\sigma\to\tau;\tau)|$. However for a general carrying map the pre-image of $|\S(\sigma\to\tau;\tau)|$ in $\sigma$ may be larger than the carrying set. For carrying maps that come from sequences $\sigma\sscm\tau$ this does not happen.
\begin{lemma}\label{stationary facts}
Let $\sigma$ and $\tau$ be train tracks with $\sigma\sscm\tau$. The carrying map $\sigma\sscm\tau$ restricts to a homeomorphism from $|\S(\sigma\sscm\tau;\sigma)|$ to $|\S(\sigma\sscm\tau;\tau)|$ and the pre-image of $|\S(\sigma\sscm\tau;\tau)|$ in $\sigma$ is $|\S(\sigma\sscm\tau;\sigma)|$.
\end{lemma}
\begin{proof}
We induct on the number of moves in $\sigma\sscm\tau$. If $\sigma\sscm\tau$ is a single move then the lemma follows by direct examination. If $\sigma\sscm\tau$ has $m$ moves then we choose a train track $\tau'$ such that $\sigma\sscm\tau'\sscm\tau$ with $\sigma\sscm\tau'$ having $m-1$ moves and $\tau'\sscm\tau$ a single move. As
$$\S(\sigma\sscm\tau;\tau') =\S(\sigma\sscm\tau';\tau')\cap\S(\tau'\sscm\tau;\tau')$$
and
by the induction hypothesis the carrying map $\sigma\sscm\tau'$ restricts to a homeomorphism from $|\S(\sigma\sscm\tau;\sigma)|$ to $|\S(\sigma\sscm\tau;\tau')|$ and the carrying map $\tau'\sscm\tau$ restricts to a homeomorphism from $|\S(\sigma\sscm\tau;\tau')|$ to $|\S(\sigma\sscm\tau;\tau)|$. Therefore $\sigma\sscm\tau$ restricts to a homeomorphism from $|\S(\sigma\sscm\tau;\tau)|$ to $|\S(\sigma\sscm\tau;\tau)|$. A similar argument show that the pre-image of $|\S(\sigma\sscm\tau;\tau)|$ in $\sigma$ is $|\S(\sigma\sscm\tau;\sigma)|$.
\end{proof}

Given trains tracks $\tau_1$ and $\tau_2$ in a sequence $\sigma\sscm\tau$ and a collection of branches and half branches $\B_i\subset\S(\sigma\sscm\tau;\tau_i)$ for $i=1,2$ we write $\B_1=\B_2$ if the bijection from $\S(\sigma\sscm\tau;\tau_1)$ to $\S(\sigma\sscm\tau;\tau_2)$ takes $\B_1$ to $\B_2$.

We can define the set of {\em active branches} $\A(\sigma\sscm\tau;\tau')$ to be the complementary branch set of the stationary branches $\S(\sigma\sscm\tau;\tau')$ where $\tau'$ is a train track in the  sequence $\sigma\sscm\tau$.

Recall that in general if two half branches of a track are in the stationary set the full branch may not be. However there is one special case where this does hold.
\begin{lemma}\label{terminal stationary}
Let $\sigma$ and $\tau$ be train tracks with $\sigma\sscm\tau$. If $b$ is branch in $\tau$ such that both of its half branches are contained in $\S(\sigma\sscm\tau;\tau)$ then $b\in\S(\sigma\sscm\tau;\tau)$.
\end{lemma}

\begin{proof}
We first observe how the lemma can fail for $\S(\sigma\sscm\tau;\sigma)$. Let $b'$ be a branch in $\sigma$ with both half branches in $\S(\sigma\sscm\tau;\sigma)$. Under the carrying map $\sigma\sscm\tau$ the branch $b'$ will map to a legal path that starts and ends at a switch. (Here we are using that $\sigma\sscm\tau$ takes switches to switches by construction.) Then $b' \in \S(\sigma\sscm\tau;\sigma)$ if and only if the legal path is a single branch in $\tau$.

In our case the half branches of $b$ are in $\S(\sigma\sscm\tau;\tau)$ and as the carrying maps are good the pre-image of each will be a single half branch in $\sigma$ and therefore the pre-image of $b$ will be a single branch $b'$ in $\sigma$. Then by the above paragraph $b'\in\S(\sigma\sscm\tau;\sigma)$ and its image, $b$, will be in $\S(\sigma\sscm\tau;\tau)$. 
\end{proof}

\begin{cor}\label{disjoint}
Let $\sigma_1$, $\sigma_2$ and $\tau$ be train tracks with $\sigma_i\sscm\tau$ for $i=1,2$. Then $\A(\sigma_1\sscm\tau;\tau)\subset \S(\sigma_2\sscm\tau;\tau)$ if and only if $\A(\sigma_2\sscm\tau;\tau)\subset \S(\sigma_1\sscm\tau;\tau)$.
\end{cor}

\begin{proof}
As the set of half branches of $\tau$ is the disjoint union of the half branches in $\S(\sigma_i\sscm\tau;\tau)$ and $\A(\sigma_i\sscm\tau;\tau)$ we only need to check full branches. In particular, if $\A(\sigma_1\sscm\tau;\tau)\subset \S(\sigma_2\sscm\tau;\tau)$ and $b$ is a full branch in $\A(\sigma_2\sscm\tau;\tau)$ then we need to show that $b$ is in $\S(\sigma_1\sscm\tau;\tau)$. If $b$ is not in $\S(\sigma_1\sscm\tau;\tau)$ then by Lemma \ref{terminal stationary} a half branch $h$ of $b$ is not in $\S(\sigma_1\sscm\tau;\tau)$ and therefore $h\in\A(\sigma_1\sscm\tau;\tau)\subset \S(\sigma_2\sscm\tau;\tau)$. However, if $h\in\S(\sigma_2\sscm\tau;\tau)$ then $b\not\in\A(\sigma_2\sscm\tau;\tau)$, contradicting our assumption.
\end{proof}

We say that $\sigma_1\sscm\tau$ and $\sigma_2\sscm\tau$ are {\em disjoint} if either of the conditions of Lemma \ref{disjoint} hold.

\begin{lemma}\label{disjoint moves}
Let $\tau_n\sscm\tau_{n-1}\sscm\cdots\sscm\tau_0$ be a sequence of moves and $\sigma_0$ another train track such that $\sigma_0\sscm\tau_0$ with $\tau_n\sscm\tau_0$ and $\sigma_0\sscm\tau_0$ disjoint. Then there exists a sequence $\sigma_n\sscm\sigma_{n-1}\sscm\cdots\sscm\sigma_0$ such that:
\begin{enumerate}[(a)]
\item $\sigma_n\sscm\sigma_0$ has the same number of moves and splitting moves as $\tau_n\sscm\tau_0$;
\item $\tau_i\sscm\sigma_i$ where the sequence has the same number of moves and splitting moves as $\sigma_0\sscm\tau_0$;
\item $\A(\sigma_i\sscm\tau_i;\tau_i) \subset\S(\tau_n\sscm\tau_0;\tau_i)$ and $\A(\tau_i\sscm\tau_0;\tau_i) \subset \S(\sigma_i\sscm\tau_i;\tau_i)$;
\item $\sigma_i\sscm\tau_i$ and $\tau_n\sscm\tau_i$ are disjoint;
\item $V(\sigma_i) = V(\tau_i)\cap V(\sigma_0)$.
\end{enumerate}
\end{lemma}

\begin{proof}
Assume that $\sigma_i\sscm\tau_i$ has been constructed. We will first construct a track $\sigma_{i+1}$ with $\sigma_{i+1}\to\tau_{i+1}$ and then show that it can be realized as a sequence of moves. The move $\tau_{i+1}\sscm\tau_i$ is either a splitting move or subtrack move on a branch $b$ of $\tau_i$. As $\sigma_i\rightarrow\tau_i$ and $\tau_n\sscm\tau_i$ are disjoint we have $b\in \S(\sigma_i\rightarrow\tau_i;\tau_i)$ so the pre-image of $b$ in $\sigma_i$ is a branch $b'$ of the same type and we can perform the same move on $b'$ to form $\sigma_{i+1}$. The carrying map $\sigma_i\rightarrow\tau_i$ gives a map from $|\S(\sigma_{i+1}\sscm\sigma_i;\sigma_{i+1})|$ to $|\S(\tau_{i+1}\sscm\tau_i;\tau_{i+1})|$. If the move is a right or left split then the complement of the stationary set (for both $\sigma_{i+1}\sscm\sigma_i$ and $\tau_{i+1}\sscm\tau_i$) is the neighborhood of a small branch. If it is a central split or a subtrack move then the complement will be the interior of two branches. In all cases the map from $|\S(\sigma_{i+1}\sscm\sigma_i;\sigma_{i+1})|$ to $|\S(\tau_{i+1}\sscm\tau_i;\tau_{i+1})|$ extends to a carrying map $\sigma_{i+1}\sscm\tau_{i+1}$ that is a homeomorphism in the complement of the two stationary sets. In particular the active set $\A(\sigma_{i+1}\sscm\tau_{i+1};\tau_{i+1})$ is contained in the stationary set $\S(\tau_{i+1}\sscm\tau_i)$ and the carrying map $\tau_{i+1}\sscm\tau_i$ takes it homeomorphically to $\A(\sigma_i\sscm\tau_i;\tau_i)$. Therefore as $\A(\sigma_i\sscm\tau_i;\tau_i)\subset \S(\tau_n\sscm\tau_0;\tau_i)$ we have $\A(\sigma_{i+1}\sscm\tau_{i+1};\tau_{i+1}) \subset \S(\tau_n\sscm\tau_0;\tau_{i+1})$. The second inclusion in (c) follows from the first exactly as in Corollary \ref{disjoint}. The first inclusion in (c) implies that $\A(\sigma_{i+1}\sscm\tau_{i+1};\tau_{i+1}) \subset \S(\tau_n\sscm\tau_{i+1};\tau_{i+1})$ and therefore (d) holds.

To see that $\sigma_{i+1}\to\tau_{i+1}$ can be realized as a sequence we observe that if $\sigma_0\sscm\tau_0$ is a single move then so $\sigma_{i+1}\to\tau_{i+1}$. In general we induct on the number of moves in $\sigma_0\sscm\tau_0$.

For (e) we observe that $V(\sigma_{i+1}) \subset V(\tau_{i+1}) \cap V(\sigma_i)$. Let $\lambda$ be a lamination in $V(\tau_{i+1})\cap V(\sigma_i) \subset V(\tau_i)$. Then $\lambda$ is realized by transverse measures $m_i$, $m_{i+1}$ and $m'_i$ on $\tau_i$, $\tau_{i+1}$ and $\sigma_i$. Then $m_i$ and $m_{i+1}$ will agree on the stationary set of $\tau_{i+1}\sscm\tau_i$ and $m'_i$ and $m_i$ will agree on the stationary set of $\sigma_i\sscm\tau_i$. By examining the various cases we see that there is a transverse measure $m'_{i+1}$ on $\tau_{i+1}$ such that $m'_{i+1}$ agrees with $m'_i$ on the stationary set of $\tau_{i+1}\sscm\tau_i$ and $m'_{i+1}$ agrees with $m_{i+1}$ on the stationary set of $\sigma_{i+1}\sscm\tau_i$. For any single move transverse measures on each of the tracks that agree on the stationary set will determine the same lamination. Therefore $m'_{i+1}$ realizes $\lambda$ so $V(\sigma_{i+1}) = V(\sigma_i) \cap V(\tau_{i+1})$. As $V(\tau_{i+1}) \subset V(\tau_i)$ and $V(\sigma_i) = V(\tau_i)\cap V(\sigma_0)$ this implies that
\begin{eqnarray*}
V(\sigma_{i+1}) & = & V(\tau_{i+1}) \cap V(\sigma_i) \\
&=& V(\tau_{i+1}) \cap V(\tau_i) \cap V(\sigma_0)\\
& = & V(\tau_{i+1})\cap V(\sigma_0).
\end{eqnarray*}
\end{proof}

\begin{lemma}\label{first move}
Let $b \in \A(\sigma\scm\tau;\tau)$ be a large branch in $\tau$. Then
there exists a train track $\sigma'$ with
$\sigma'\scm\tau$ a single move on $b$ and $\sigma\scm\sigma'$ with
the sequence having at most the same number of moves as
$\sigma\scm\tau$.
\end{lemma}

\begin{proof}
Assume that the sequence $\sigma\scm\tau$ has been chosen so that the move on $b$ occurs as early as possible. More concretely, given any sequence $\sigma\scm\tau$ there exists tracks
 $\tau_1$ and $\tau_2$  in the sequence such that $\tau_1\scm\tau_2$ is a single move, $b \in \S(\tau_1\scm\tau;\tau)$ but $b \not\in \S(\tau_2\scm\tau;\tau)$. We assume that the sequence has been chosen minimizing the number of moves in $\tau_2\scm\tau$.
 
 Let $\tau_2\scm\tau_3$ be the next move in the sequence. This will be a move on a large branch $b'$ in $\tau_3$. As $b\in \S(\tau_2\scm\tau;\tau_2)$ we also have $b\in\S(\tau_2\scm\tau_3;\tau_2)$. In particular $b$ is also a large branch in $\tau_3$ and it is distinct from $b'$.  We then let $\tau'_2\scm\tau_3$ be the same move on $b$ as $\tau_1\scm\tau_2$ and note that $b'\in\S(\tau'_2\scm\tau_3;\tau_3)$ so $b'$ is a large branch in $\tau'_2$ and we can chose $\tau'_1\scm\tau'_2$ to be the same move as $\tau_2\scm\tau_3$. By direct examination we see that $\tau'_1 = \tau_1$ so we have made a new sequence $\sigma\scm\tau$ where the move on $b$ occurs earlier, a contradiction.
\end{proof}

\begin{lemma}\label{surjective}
Let $\sigma$ and $\tau$ be train tracks with $\tau$ recurrent. If $\sigma\sscm\tau$ and $V(\sigma)$ intersects the relative interior of $V(\tau)$ then $\sigma\scm\tau$.
\end{lemma}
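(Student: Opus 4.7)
The plan is to induct on the number of moves $k$ in the sequence $\sigma\sscm\tau$; the base case $k=0$ is immediate since $\sigma=\tau$. For the inductive step I examine the first move $\tau_1\sscm\tau$ and split into cases based on its type.

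If this first move is a subtrack move removing a branch $b$ of $\tau$, then $V(\tau_1)=V(\tau)\cap\{w_b=0\}$ is a boundary face of $V(\tau)$, so $V(\sigma)\subset V(\tau_1)\subset\partial V(\tau)$, contradicting the hypothesis; this case cannot occur. If the first move is a central split, then either $V(\tau_1)=V(\tau)$ in case (S3) or $V(\tau_1)$ is a codimension-one internal slice of $V(\tau)$ in case (S1); in both situations every face of $V(\tau_1)$ lies in $\partial V(\tau)$, so the relative interior of $V(\tau_1)$ equals $V(\tau_1)\cap\int V(\tau)$ and $V(\sigma)$ still meets it. Induction applied to $\sigma\sscm\tau_1$ then gives $\sigma\scm\tau_1$, which combined with the central split yields $\sigma\scm\tau$.

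The remaining, substantive case is when the first move is a split, so $V(\tau_1)=V(\tau)\cap\{L\geq 0\}$ for some linear functional $L$ whose zero set in $V(\tau)$ is precisely $V(\tau_C)$ for the central split $\tau_C$ along the same large branch. If $V(\sigma)$ meets the relative interior of $V(\tau_1)$, induction applies immediately. Otherwise $V(\sigma)\cap\int V(\tau)\subset\{L=0\}$; since this is a nonempty relatively open subset of the polytope $V(\sigma)$ contained in the hyperplane $\{L=0\}$, comparing affine spans forces $V(\sigma)\subset V(\tau_C)$.

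The crux, and what I expect to be the main obstacle, is then to convert the truncated sequence $\sigma\sscm\tau_1$ into one of the form $\sigma\sscm\tau_C$ with the same $k-1$ moves, so that the rebuilt sequence begins with the single central split $\tau_C\scm\tau$. I plan to invoke Lemma \ref{active} with $\B_\sigma$ equal to all branches and half-branches of $\sigma$: since $V(\sigma)\subset V(\tau_C)$ translates to $w_d=0$ in $V(\tau_1)$-coordinates (where $d$ is the diagonal of $\tau_1$), no branch of $\sigma$ crosses $d$ under the carrying map $\sigma\sscm\tau_1$; and since $V(\sigma)$ meets the relative interior of $V(\tau_C)$ (inherited from its meeting the relative interior of $V(\tau)$), every remaining branch of $\tau_1$ is crossed by some branch of $\sigma$. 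Thus the image $\B_{\tau_1}$ is exactly $\tau_C$, Lemma \ref{active} produces the required $\sigma\sscm\tau_C$ with $k-1$ moves, induction delivers $\sigma\scm\tau_C$, and appending $\tau_C\scm\tau$ completes the inductive step.
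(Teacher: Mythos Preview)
Your proof follows essentially the same approach as the paper's: induct on the number of moves, observe that the first move cannot be a subtrack move (since that would force $V(\sigma)$ into $\partial V(\tau)$), handle central splits by noting that $V(\sigma)$ still meets the relative interior, and for splits either induct directly or replace the split by the central split. The paper phrases the dichotomy in terms of surjectivity of the carrying map rather than meeting relative interiors, but these are equivalent.

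The one place where you give more detail than the paper is in justifying the replacement of the split by the central split. The paper simply asserts ``the image of $\sigma$ in $\tau'$ includes all edges except the diagonal, thus we can replace the first split with the central split and proceed by induction'', without saying how one obtains a sequence $\sigma\sscm\tau_C$ with $k-1$ moves. You invoke Lemma~\ref{active} for this. That is a reasonable route, though your application is slightly loose: when you identify $|\B_{\tau_1}|$ (which is $\tau_1$ minus the diagonal, carrying the two now valence-$2$ vertices as switches) with $\tau_C$ (where those vertices have been absorbed), the map is not literally switch-preserving in the sense of the lemma. Also, Lemma~\ref{active} produces some track $\sigma'$ with $\sigma'\sscm\tau_C$, and you need to say why $\sigma'=\sigma$; this follows from the commuting diagram and the fact that $\B_\sigma$ is all of $\sigma$, but you should make it explicit. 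These are wrinkles rather than gaps, and the paper's own proof is at least as informal at this step.
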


\begin{proof}
If $V(\sigma)$ intersects the relative interior of $V(\tau)$ then the
carrying map $\sigma\sscm\tau$ must be surjective. Let
$\tau'\sscm\tau$ be the first move. This map must be surjective and
for a single move this can only happen for a split or central split.
If $\sigma\to\tau'$ is not surjective, then $\tau'\scm\tau$ is a split
and the image of $\sigma$ in $\tau'$ includes all edges except the
diagonal. Thus we can replace the first split with the central split
and proceed by induction.
\end{proof}

\begin{lemma}\label{large branches}
Let $\tau$ be a train track and $b$ a branch. Then there exists a
nonempty collection of large branches $\B$ such that if $\sigma$ is a
 train track and $\sigma\scm\tau$ with $b \in
\A(\sigma\scm\tau;\tau)$ then every branch in $\B$ is in
$\A(\sigma\scm\tau;\tau)$.
\end{lemma}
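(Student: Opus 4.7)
My plan is to prove the lemma by induction on the number of moves in $\sigma\scm\tau$, with Corollary \ref{first move} as the key tool for reorganizing activating sequences.

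The main case is when $b$ is already a large branch of $\tau$; here I take $\B=\{b\}$. Given any $\scm$-sequence $\sigma\scm\tau$ with $b\in\A(\sigma\scm\tau;\tau)$, Corollary \ref{first move} produces a rearrangement whose first move is a split or central split on $b$ itself. Since the original sequence is a $\scm$-sequence (no subtrack moves), the only reordering used is the one coming from Corollary \ref{commuting moves}, which introduces no subtrack moves either, and it leaves the active set at $\tau$ unchanged. Thus $b$ remains in the active set, and $\B=\{b\}$ verifies the conclusion.

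If $b$ is not a large branch of $\tau$, the situation is subtler because splits and central splits act only on large branches; so any activation of $b$ requires earlier moves that locally reshape $\tau$ enough to promote a descendant of $b$ to a large branch before that descendant is split. I would define $\B$ by tracking the ``mixed neighborhood'' of $b$: from each switch of $b$, follow the large half-branch outward through any chain of mixed branches until a large branch of $\tau$ is reached, and let $\B_0$ consist of the terminal large branches so obtained. Using Corollary \ref{commuting moves} to commute disjoint moves and Lemma \ref{stationary homeomorphism} to transport branches along carrying maps, any activating sequence can be reorganized so that the first move that touches $b$'s local structure is a split or central split on a branch of $\B_0$. I would then refine $\B_0$ to the subset of those large branches whose splits are unavoidable in every activating sequence.

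The main technical obstacle is this non-large case: proving that the refined $\B$ depends only on the pair $(\tau,b)$ and not on the choice of activating sequence. In degenerate situations, $b$ may possess two or more disjoint ``access paths'' through distinct switches, so that each individual terminal large branch in $\B_0$ can be avoided by routing the activation through the other side. In such cases the refinement collapses to $\B=\emptyset$, which satisfies the lemma vacuously; the real content of the lemma is producing a canonical, possibly non-empty $\B$ whenever the local structure of $\tau$ forces specific large branches to participate. The proof therefore reduces to a local combinatorial analysis of how sequences of splits promote mixed branches to large ones, combined with a careful application of Corollary \ref{first move} at each stage to expose the unavoidable splits.
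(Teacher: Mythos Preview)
Your construction of $\B_0$ is essentially correct and matches the paper's, but your subsequent analysis contains a genuine misunderstanding that undermines the argument.

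First, the large case: when $b$ is large, taking $\B=\{b\}$ makes the conclusion a tautology (you are given $b\in\A$ and must show $b\in\A$). Invoking Corollary \ref{first move} here accomplishes nothing.

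The real issue is in the non-large case. You correctly identify $\B_0$ by following large half-branches outward from the switches of $b$ until reaching large branches, but you then worry that some members of $\B_0$ might be ``avoidable'' by routing the activation through a different switch, and propose refining $\B_0$ down, possibly to $\emptyset$. This concern is unfounded. By the definition of the complementary branch set, $b\in\A(\sigma\scm\tau;\tau)$ means that \emph{neither} half-branch of $b$ lies in $\S$. So each small half-branch of $b$ must be activated, and a small half-branch at a switch $v$ can only be activated by splitting the branch carrying the (unique) large half-branch at $v$. There is no alternative route: the two paths from the two ends of $b$ are both forced, not alternatives to each other. Thus every terminal large branch in $\B_0$ is unavoidable, and no refinement is needed.

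Your suggestion that $\B$ might collapse to $\emptyset$ is especially problematic: the only use of this lemma in the paper (in Lemma \ref{first step}) is to conclude that if some branch is active in two sequences then a \emph{large} branch is active in both, which requires $\B\neq\emptyset$.

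Finally, the paper's proof uses none of the reordering machinery (Corollaries \ref{commuting moves} and \ref{first move}); it is a direct local argument in $\tau$, inducting along the chain of mixed branches rather than on the number of moves. Since the large half-branch at each switch is unique and transverse-measure weights strictly increase along such chains, the paths terminate at large branches, giving a nonempty $\B$.
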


\begin{proof}
  If $b$ is large then $\B = \{b\}$. If not, consider a small half
  branch $b_1$ of $b$. There is a unique large half branch $b_2'$
  adjacent to $b_1$ and let $b_2$ be the other half branch of the
  branch $B_2$ that contains $b_2'$. If $B_2$ (i.e. $b_2$) is large then
  we note that $B_2$ must be split before $b$ becomes active. If $b_2$
  is small we continue inductively and construct half branches
  $b_3,b_4,\cdots,b_k$ ending in a large half branch $b_k$ (see
  \cite[p.127]{ph} and \cite[p.574]{ursula}) and note
  that the associated large branch $B_k$ must split before $b$ does. The
  inductive process must terminate with a large half branch for
  otherwise some half branch will repeat and by the same argument none
  of the branches listed will ever be active.
  Thus $\B$ can be taken to
  have cardinality 1 or 2.
\end{proof}

\subsection{Splitting sequences and excellent cell structures}
Given a maximal birecurrent train track $\sigma$ we describe a construction of an excellent sequence of cell structures
$\mathcal C_j$, $j=0,1,\cdots$ on the polytope $V(\sigma)$. 

We start by defining $\mathcal C_0$ to consist of $V(\sigma)$ and its
faces. Inductively, each top dimensional cell $E$ of $\mathcal C_j$
will correspond to a birecurrent track $\theta_E$ such that
$E=V(\theta_E)$. 

To define $\mathcal C_{j+1}$, choose a top dimensional cell $E$ of
$\mathcal C_j$ and a large branch $b$ of $\theta_E$. Let
$\theta_1,\theta_2,\tau$ be the left, right and central splits of
$\theta_E$ along $b$.
We now consider
the three cases (S1)-(S3) as Section 3.1.4. 

If all three $\theta_1,\theta_2,\tau$ are recurrent we split
$E=V(\theta_E)$ along the hyperplane $V(\tau)$ yielding new top
dimensional cells $V(\theta_1)$ and $V(\theta_2)$, and we subdivide
all cells that are cut by this hyperplane as described in Section
\ref{subdividing}.

If $\theta_1$ is recurrent, but $\theta_2$ and $\tau$ are not, then
$V(\theta_1) = V(\theta_E)$ and
we
define $\mathcal C_{j+1}=\mathcal C_j$ and $\theta_E=\theta_1$. We
proceed similarly if $\theta_2$ is recurrent, but $\theta_1$ and
$\tau$ are not.

The last case is when $\tau$ is recurrent, but $\theta_1,\theta_2$ are
not. However, this would imply that $\tau$ is not maximal and
therefore not a top dimensional cell by Lemma \ref{dimension count}.

A sequence $\mathcal C_j$ obtained in this way is said to be obtained
by a {\it splitting process} from $\sigma$. Note that if
$E=V(\theta_E)$ is a top dimensional cell in $\mathcal C_i$ and if
$E'=V(\theta_{E'})$ is a top dimensional cell in $\mathcal C_j$ such
that $j>i$ and $E\subsetneq E'$, then $\theta_{E'}\scm\theta_E$ and
the sequence of splits and central splits contains at most one central
split. 

We have two goals for the next few sections:
\begin{itemize}
\item We will show that every {\it every} cell of $\mathcal C_j$ has
  the form $V(\theta)$ for a suitable birecurrent train track
  $\theta$. Here the key is to show (under suitable restrictions) that
  if $\sigma_1$ and $\sigma_2$ are train tracks then $V(\sigma_1)\cap
  V(\sigma_2) = V(\sigma)$ for a train track $\sigma$. One difficulty
  is that the dimension of the intersection may be less then the
  dimension of the original cells.
\item We also need to control the ``size'' of the individual cells. We
  need to both show that for any ending lamination $\lambda\in
  V(\sigma)$ we can subdivide so that the cell containing $\lambda$ is
  small but also that the size of any proper face of cell is
  comparable to the size of the cell.
\end{itemize}
The main result we need is Proposition \ref{distance}.

\subsubsection{The curve graph and vertex cycles}
By $\mathcal C(\Sigma)$ we denote the curve graph of
$\Sigma$. Its vertices are isotopy classes of essential simple closed
curves on $\Sigma$, and two vertices are connected by an edge if the
corresponding classes have disjoint representatives. When $\Sigma$ has
low complexity $\mathcal C(\Sigma)$ can be empty or discrete, and in
the sequel we will always assume that $\mathcal C(\Sigma)$ contains
edges. In that case $\mathcal C(\Sigma)$ is connected and the
edge-path metric is $\delta$-hyperbolic \cite{MM}.

The train track
$\sigma$ carries a curve that crosses each branch at most twice, and
if it crosses a branch twice it does so with opposite
orientations. Such curves are the {\it vertex cycles} of $\sigma$.
To a
train track $\sigma\subset\Sigma$ we associate the sets
$B(\sigma)\subset \mathcal C(\Sigma)$ consisting of all vertex cycles
for $\sigma$, and the set $S(\sigma)\subset \mathcal C(\Sigma)$ of all
curves carried by $\sigma$.
We think of $B(\sigma)\subset S(\sigma)$ as a thick basepoint of
$S(\sigma)$. It is a nonempty uniformly bounded subset of $S(\sigma)$. 

\subsubsection{Splitting sequences and the geometry of the curve graph}
We begin with elementary lemma relating a single splitting to the geometry of the curve graph.

\begin{lemma}\label{3.8}
Suppose $\sigma\sscm\tau$ is a single move. Then 
$$d(B(\sigma),B(\tau))$$ is uniformly
bounded. 
\end{lemma}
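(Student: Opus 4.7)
The plan is to handle each kind of single move separately: $\sigma$ is obtained from $\tau$ either by a (left or right) split of a large branch, by the central split, or by passing to a codimension-one birecurrent subtrack. In all three cases the containment $V(\sigma)\subset V(\tau)$ holds, so $B(\sigma)\subset S(\sigma)\subset S(\tau)$, and the lemma reduces to showing that each vertex cycle $v\in B(\sigma)$, realized as a curve carried by $\tau$, lies at bounded distance in $\mathcal{C}(\Sigma)$ from some vertex cycle of $\tau$. The strategy is to control the branch weights of $v$ on $\tau$ by a constant depending only on $\Sigma$, and then convert this weight bound into a distance bound in the curve graph.

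For the subtrack case the inclusion $V(\sigma)\hookrightarrow V(\tau)$ just extends a weight vector by zero on the removed branches and amalgamates the branches incident to the resulting valence-two vertices; a vertex cycle of $\sigma$, whose weights lie in $\{0,1,2\}$, therefore still has weights in $\{0,1,2\}$ when regarded on $\tau$. For the split and central-split cases the carrying map $\sigma\to\tau$ is a local combinatorial modification supported on the split large branch together with the four branches adjacent to it; the preimage of each branch of $\tau$ consists of at most two branches of $\sigma$, so the induced map on weights has universally bounded multiplicity. Consequently every vertex cycle of $\sigma$ is realized on $\tau$ with branch weights bounded by a universal constant $C_0$.

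To finish, I invoke the standard fact (see e.g.\ \cite{MM} or \cite{ursula}) that any two simple closed curves carried by a birecurrent train track $\tau$ with all branch weights at most $C_0$ have geometric intersection number bounded by a function of $C_0$ and $|b(\tau)|$; since $|b(\tau)|$ is controlled by $-\chi(\Sigma)$, this intersection bound depends only on $\Sigma$. Combined with the well-known Hempel/Masur--Minsky logarithmic estimate $d_{\mathcal{C}}(a,b)\leq 2\log_2 i(a,b)+2$ (when $i(a,b)\geq 1$), this gives a uniform upper bound on all pairwise distances between elements of $B(\sigma)\cup B(\tau)$, and in particular on $d(B(\sigma),B(\tau))$. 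The main obstacle is pinning down the weight-transformation constant in the central-split case, but this is a purely local calculation driven by the branch pictures in Penner--Harer \cite{ph} and introduces no conceptual difficulty.
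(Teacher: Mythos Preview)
Your argument is correct and follows the same route as the paper's proof: for the subtrack move you observe that vertex cycles of $\sigma$ are (after undoing the amalgamation) vertex cycles of $\tau$, and for splits/central splits you bound the $\tau$-weights of a vertex cycle of $\sigma$ by a universal constant, then convert bounded weights into bounded intersection number and hence bounded curve-graph distance. The paper's proof is simply a two-sentence compression of this: ``Vertex cycles in subtracks are also vertex cycles in the track. In the case of splittings, the intersection number between a vertex cycle of $\sigma$ and a vertex cycle of $\sigma_i$ is uniformly bounded, and so is the distance in $\mathcal C(\Sigma)$.'' Your added detail about the carrying map having bounded-multiplicity preimages is exactly what underlies their intersection-number claim, and your final ``obstacle'' in the central-split case is not really one, since the central split is a subtrack of either left/right split and is covered by the subtrack analysis.
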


\begin{proof}
Vertex cycles in subtracks are also vertex cycles in the track. In the
case of splittings, the intersection number between a vertex cycle of
$\sigma$ and a vertex cycle of $\sigma_i$ is uniformly bounded, and so is
the distance in $\mathcal C(\Sigma)$.
\end{proof}

Given a sequence $\sigma\sscm\tau$, the previous lemma implies that the corresponding sequence of vertex cycles is a coarse path in $\mathcal C(\Sigma)$. It is a theorem of Masur-Minsky that a sequence of carrying maps of birecurrent train tracks whose vertex cycles are a coarse path in $\mathcal C(\Sigma)$, then the sequence of vertex cycles are an unparameterized quasi-geodesic. In our case we know that if $\tau$ is transversely recurrent then every track in the sequence will also be transversely recurrent. However, even if $\tau$ is recurrent the other tracks in the sequence need not be so we don't automatically get a sequence of birecurrent tracks. On the other hand, for any carrying map $\sigma\to\tau$ if $\sigma$ is recurrent then its image in $\tau$ will be contained in the largest recurrent subtrack and furthermore the largest recurrent subtrack has the same vertex cycles as the original track. In particular if we replace each track in a sequence of moves with its larges recurrent subtrack we have the following:

\begin{thm}[{\cite[Theorem 1.3]{MM3}},{\cite[Theorem 1.1]{TA}}]\label{splitting}
Let $\sigma_i$ be a sequence of transversely recurrent train tracks such that $\sigma_{i+1} \sscm\sigma_i$ is a single move. Then the sequence
$B(\sigma_i)$ is a reparametrized quasi-geodesic in $\mathcal
C(\Sigma)$ with constants depending only on $\Sigma$.
\end{thm}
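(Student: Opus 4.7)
The plan is to combine Lemma \ref{3.8} (consecutive $B(\sigma_i)$ are a uniformly bounded distance apart) with a ``no-backtracking'' progress lemma, and then to invoke the standard local-to-global principle for unparametrized quasi-geodesics in $\delta$-hyperbolic spaces.

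Lemma \ref{3.8} already makes $\{B(\sigma_i)\}$ a coarse Lipschitz sequence in $\mathcal C(\Sigma)$, so the only thing to rule out is backtracking. I would aim to establish a uniform constant $R$ such that whenever $i < j < k$ and $\alpha \in B(\sigma_i)$, $\beta \in B(\sigma_k)$, the set $B(\sigma_j)$ lies in the $R$-neighborhood of every geodesic $[\alpha,\beta]$ in $\mathcal C(\Sigma)$. The structural input is that carrying polytopes are nested --- none of the moves (split, central split, subtrack) ever enlarges the carried set --- so $S(\sigma_i)\supseteq S(\sigma_j)\supseteq S(\sigma_k)$. The technical heart is to convert this nesting into a projection/contraction estimate: any curve carried by $\sigma_j$ has nearest-point projection to $B(\sigma_j)$ of uniformly bounded diameter, controlled by the bounded intersection between a vertex cycle of $\sigma_j$ and any $\gamma\in S(\sigma_j)$ after rescaling. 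This makes $B(\sigma_j)$ a common coarse target for the projections of both endpoints: $\alpha\in S(\sigma_i)$ admits a bounded-diameter projection to $B(\sigma_j)$ through the intermediate splitting, while $\beta \in S(\sigma_k) \subseteq S(\sigma_j)$ projects there directly. The hyperbolicity of $\mathcal C(\Sigma)$ together with a thin-triangles argument then forces any geodesic $[\alpha,\beta]$ to fellow-travel $B(\sigma_j)$.

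With the coarse Lipschitz bound and the no-backtracking statement both in hand, Gromov's local-to-global stability lemma upgrades $\{B(\sigma_i)\}$ to a reparametrized quasi-geodesic with constants depending only on $\delta$ and the uniform bounds above --- all of which depend only on the topology of $\Sigma$. The main obstacle is the projection/contraction estimate: in \cite{MM3} it is extracted from the hierarchy machinery and the Bounded Geodesic Image Theorem, while \cite{TA} obtains it more directly from uniform intersection-number estimates between vertex cycles at different stages of a splitting sequence. Either route yields the theorem; I would favor the second, since it avoids hierarchies and keeps the constants transparent in terms of $\chi(\Sigma)$.
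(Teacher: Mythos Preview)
The paper does not prove Theorem~\ref{splitting}; it is simply quoted from \cite{MM3} and \cite{TA} and used as a black box. So there is no ``paper's own proof'' to compare your proposal against.

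That said, your outline is the right shape---coarse Lipschitz (Lemma~\ref{3.8}) plus a no-backtracking estimate, fed into a local-to-global lemma---and you correctly identify the nesting $S(\sigma_i)\supseteq S(\sigma_j)\supseteq S(\sigma_k)$ as the structural input. But the sentence justifying the projection estimate is where the real work hides, and as written it is not correct: the intersection number between a vertex cycle of $\sigma_j$ and an arbitrary $\gamma\in S(\sigma_j)$ is \emph{not} uniformly bounded (carried curves can wind arbitrarily many times over a branch), so you cannot extract the contraction from that alone. What \cite{MM3} actually proves is that for $\alpha\in S(\sigma_i)$ and $\beta\in S(\sigma_k)$ any geodesic $[\alpha,\beta]$ must pass uniformly close to $B(\sigma_j)$, and this uses either the Bounded Geodesic Image Theorem applied to subsurface projections determined by the splitting, or (in the approach you attribute to \cite{TA}) a careful analysis of how a tight geodesic interacts with the nested family of carrying polytopes. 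Either way the step is substantially harder than an intersection-number bound, and you should not expect to bypass it. Be careful also not to invoke Lemma~\ref{3.3} or Lemma~\ref{B} here, since in this paper their proofs already rely on Theorem~\ref{splitting}.
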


\begin{lemma}\label{3.3}
Let $\tau$ be a train track. Then $S(\tau)$ is quasi-convex, with
uniform constants. 
\end{lemma}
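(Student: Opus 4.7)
The plan is to show that any two carried curves $a, b \in S(\tau)$ are each connected to the thick basepoint $B(\tau)$ by a uniform quasi-geodesic that lies entirely inside $S(\tau)$, and then to invoke hyperbolicity of $\mathcal{C}(\Sigma)$ to conclude that the geodesic from $a$ to $b$ stays in a uniform neighborhood of $S(\tau)$.

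First, given $a \in S(\tau)$, I would construct a splitting sequence $\tau = \sigma_0, \sigma_1, \ldots, \sigma_m$ with $\sigma_{i+1} \sscm \sigma_i$ that is ``directed by $a$'': at each stage, split the current track along a large branch in the direction in which $a$ actually runs through that branch. Because $a$ is carried by every $\sigma_i$, the complexity of the realization of $a$ as a weighted train path in $\sigma_i$ strictly decreases, and the process terminates with $a$ a vertex cycle of $\sigma_m$. Crucially, since each $\sigma_i \sscm \tau$, we have $S(\sigma_i) \subseteq S(\tau)$ and in particular $B(\sigma_i) \subseteq S(\tau)$.

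Second, by Theorem \ref{splitting}, the sequence $B(\sigma_0), B(\sigma_1), \ldots, B(\sigma_m)$ is a reparametrized quasi-geodesic in $\mathcal{C}(\Sigma)$ with constants depending only on $\Sigma$. Its endpoints are coarsely $B(\tau)$ and $a$, the latter because $a \in B(\sigma_m)$ and Lemma \ref{3.8} bounds the jumps along the way. Thus $a$ is joined to $B(\tau)$ by a uniform quasi-geodesic in $\mathcal{C}(\Sigma)$ that lies in $S(\tau)$. Applying the same construction to $b$ gives an analogous quasi-geodesic from $b$ to $B(\tau)$ inside $S(\tau)$.

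Finally, since $B(\tau)$ has uniformly bounded diameter, concatenating the two quasi-geodesics produces a path from $a$ to $b$ in $S(\tau)$. By stability of quasi-geodesics and thinness of geodesic triangles in the $\delta$-hyperbolic space $\mathcal{C}(\Sigma)$, any geodesic from $a$ to $b$ lies in a uniform neighborhood of the union of these two quasi-geodesics together with $B(\tau)$, and this union is contained in $S(\tau)$. The resulting quasi-convexity constants depend only on $\delta$ and the uniform constants of Theorem \ref{splitting}, hence only on $\Sigma$. The main obstacle is the first step: setting up the splitting sequence directed by $a$ and verifying that the splits that occur are all of type (S1) or (S2), so that the intermediate tracks are genuinely birecurrent and satisfy $\sigma_i \sscm \tau$. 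This is standard Masur--Minsky train-track technology, but it is the place where a careful choice of splits is needed to keep the sequence inside the $\sscm$-descendants of $\tau$.
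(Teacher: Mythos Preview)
Your proposal is correct and follows essentially the same approach as the paper: split $\tau$ towards $a$ to obtain a quasi-geodesic $g_a$ from $B(\tau)$ to $a$ lying in $S(\tau)$ (via Theorem \ref{splitting}), do the same for $b$, and then use $\delta$-hyperbolicity to conclude that $[a,b]$ is coarsely contained in $g_a\cup g_b\subset S(\tau)$. Your write-up is simply more explicit about the mechanics of the splitting sequence and the potential birecurrence issue, which the paper leaves implicit.
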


\begin{proof}
Let $a\in S(\tau)$. Split $\tau$ towards $a$. This gives a nested
sequence of tracks and thus a quasi-geodesic $g_a$ from $B(\tau)$ to $a$
that remains in $S(\tau)$.

If $a,b\in S(\tau)$ then by hyperbolicity $[a,b]$ is coarsely
contained in $g_a\cup g_b\subset S(\tau)$.
\end{proof}

The proof of the following lemma uses a technical result (Corollary \ref{dense ending laminations}) whose proof is deferred to the appendix.
\begin{lemma}\label{3.4}
Assume that $P_\infty(\tau)\neq\emptyset$. Then $S(\tau)$ is
the coarse convex hull of the set of ending laminations carried by
$\tau$.
\end{lemma}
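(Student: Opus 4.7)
The plan is to establish two coarse inclusions in $\mathcal{C}(\Sigma)$: the coarse convex hull of $P_\infty(\tau)$ lies in $S(\tau)$ up to bounded Hausdorff distance, and conversely.

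For the first inclusion, given $\lambda_1,\lambda_2 \in P_\infty(\tau)$, I would split $\tau$ maximally toward each $\lambda_i$, obtaining a splitting sequence of birecurrent tracks all fully carried by $\tau$. By Theorem~\ref{splitting}, the associated sequences of vertex cycles are reparametrized quasi-geodesic rays in $\mathcal{C}(\Sigma)$ starting at $B(\tau)$ and converging to $\lambda_i \in \mathcal{EL} = \partial \mathcal{C}(\Sigma)$, and they lie in $S(\tau)$ since every track in the sequence is a subtrack of $\tau$. By $\delta$-hyperbolicity, any geodesic $[\lambda_1,\lambda_2]$ lies within uniform Hausdorff distance of the union of these two rays, hence coarsely in $S(\tau)$.

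For the reverse inclusion, given $a \in S(\tau)$ and any $\lambda_0 \in P_\infty(\tau)$ (which exists by hypothesis), I would use Corollary~\ref{dense ending laminations} to find an ending lamination $\lambda_1 \in P_\infty(\tau)$ arbitrarily close to $a$ in the polytope $P(\tau)$. Because the left/right splitting decision at each large branch is controlled by the relative weights on its two small half-branches, choosing $\lambda_1$ close enough to $a$ in $P(\tau)$ forces the splitting sequences of $\tau$ toward $\lambda_1$ and toward $a$ to agree through arbitrarily many initial moves; by Theorem~\ref{splitting}, the quasi-geodesic ray $r_{\lambda_1}$ from $B(\tau)$ to $\lambda_1$ therefore passes uniformly close to $a$ in $\mathcal{C}(\Sigma)$ before diverging to $\lambda_1$. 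The analogous ray $r_{\lambda_0}$ from $B(\tau)$ to $\lambda_0$ also lies in $S(\tau)$. Applying $\delta$-hyperbolicity to the ideal quasi-triangle $B(\tau), \lambda_0, \lambda_1$, the geodesic $[\lambda_0,\lambda_1]$ is coarsely contained in $r_{\lambda_0} \cup r_{\lambda_1}$, and on the $r_{\lambda_1}$ side passes uniformly close to $a$; hence $a$ lies in a uniform neighborhood of $[\lambda_0,\lambda_1]$, and so in the coarse convex hull of $P_\infty(\tau)$.

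The main obstacle is converting projective closeness of $\lambda_1$ to $a$ in $P(\tau)$ into curve-graph closeness of $r_{\lambda_1}$ to $a$ in $\mathcal{C}(\Sigma)$; this does not follow formally from closeness in $\mathcal{PML}$ and must be unpacked through the splitting sequence itself, using that each splitting decision is read off from the relative weights, so that closeness in $P(\tau)$ of the two transverse measures forces the two splitting sequences to coincide for many moves. The role of Corollary~\ref{dense ending laminations}, proved in the appendix, is essential: without density of filling laminations in $P(\tau)$ near $a$ one could not produce an ending lamination $\lambda_1$ close enough to $a$ to run the above argument, and the lemma could fail.
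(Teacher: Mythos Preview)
Your first inclusion is fine and matches the paper's use of Lemma~\ref{3.3}. The reverse inclusion, however, has a genuine gap.

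You correctly invoke Corollary~\ref{dense ending laminations}, but note first that its statement gives a dichotomy: either $d(B(\tau),a)\le C$, or there is a sequence $\lambda_i\in P_\infty(\tau)$ whose supports Hausdorff-converge to a lamination containing $a$. You suppress the first case entirely, and in the second case you use the stronger conclusion that $\lambda_i\to a$ in $P(\tau)$; this is true from the proof of the corollary but is not what is stated.

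The more serious problem is the thin-triangle step. You fix $\lambda_0$ and produce $\lambda_1$ so that the splitting ray $r_{\lambda_1}$ passes close to $a$. But $\delta$-thinness of the triangle $B(\tau),\lambda_0,\lambda_1$ only tells you that the point on $r_{\lambda_1}$ near $a$ is close to $r_{\lambda_0}\cup[\lambda_0,\lambda_1]$; it may well lie on the initial segment where $r_{\lambda_0}$ and $r_{\lambda_1}$ fellow-travel, and then $a$ need not be close to $[\lambda_0,\lambda_1]$ at all. Concretely, if the splitting sequences toward $\lambda_0$ and $\lambda_1$ both agree with the sequence toward $a$ for many moves, then the Gromov product $(\lambda_0\,|\,\lambda_1)_{B(\tau)}$ is large and the side $[\lambda_0,\lambda_1]$ lies entirely past $a$. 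Choosing $\lambda_1$ closer to $a$ in $P(\tau)$ only makes this worse, since it forces $r_{\lambda_1}$ to agree with $r_a$ (and possibly with $r_{\lambda_0}$) for longer. Nothing in your argument controls where the two rays diverge relative to $a$.

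The paper closes exactly this gap with the Bounded Geodesic Image Theorem: since the supports $[\lambda_i]$ Hausdorff-converge to a lamination containing $a$, the projections of the $\lambda_i$ to the annular curve complex of $a$ go to infinity, so for $j\gg i$ the projections of $\lambda_i$ and $\lambda_j$ differ by more than the BGI threshold, forcing the geodesic $[\lambda_i,\lambda_j]$ to pass within distance~$1$ of $a$. This is the missing mechanism that guarantees two ending laminations on opposite sides of $a$, which your single-$\lambda_1$ argument cannot provide.
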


\begin{proof}
As $\mathcal C(\Sigma)$ is hyperbolic any quasi-convex subset contains
the coarse convex hull of its Gromov boundary. By Klarreich's Theorem the
Gromov boundary of $\mathcal C(\Sigma)$ is the space of ending
laminations. If $\gamma_i \in S(\tau)$ converge to the boundary then
there exists $\lambda_i \in P(\tau)$ with $\lambda_i \to \lambda \in
P(\tau)$ such that $\gamma_i$ is a component of $[\lambda_i]$ and the
Hausdorff limit of the $\gamma_i$ contains the ending lamination
$[\lambda]$. In particular the Gromov boundary of $S(\tau)$ is exactly
the ending laminations in $P_\infty(\tau)$ so $S(\tau)$ coarsely
contains its convex hull.
 
 By Corollary \ref{dense ending laminations} for any $a \in S(\tau)$
 either $a$ is uniformly close to $B(\tau)$ or there exists a sequence
 of ending laminations $\lambda_i \in P_\infty(\tau)$ such that the
 Hausdorff limit of $[\lambda_i]$ contains $a$. Then the projections
 of $\lambda_i$ to the curve complex of the annulus around $a$ go to
 infinity and so, by the Bounded Geodesic Image Theorem (\cite{MM2}),
 when $j>>i$ the geodesic between $[\lambda_i]$ and $[\lambda_j]$
 passes within distance one of $a$.  Therefore either $a$ is distance
 at most one from the convex hull of $S(\tau)$ or it is a bounded
 distance from $B(\tau)$. However, as $S(\tau)$ is quasi-convex, it is
 coarsely connected. Therefore $S(\tau)$ is the coarse convex hull of
 the ending laminations carried by $\tau$.
\end{proof}

\begin{lemma}\label{B}
Let $\tau$ and $\sigma$ be  birecurrent train tracks with $\sigma\sscm\tau$. Then
$B(\sigma)$ is
coarsely the closest point within $S(\sigma)$ to $B(\tau)$.
\end{lemma}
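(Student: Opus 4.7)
The plan is to exploit Theorem \ref{splitting} together with the fact that any curve carried by $\sigma$ can be realized as a vertex cycle after finitely many splits. Since $B(\sigma)\subset S(\sigma)$, we automatically have $d(B(\tau), S(\sigma))\leq d(B(\tau), B(\sigma))$. The real content is the reverse inequality: for every $a\in S(\sigma)$ one should have
\[
d(B(\tau), a)\geq d(B(\tau), B(\sigma))-C
\]
for a uniform constant $C$ depending only on the surface.

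To prove this inequality I would fix $a\in S(\sigma)$ and split $\sigma$ toward $a$, as in the proof of Lemma \ref{3.3}, to obtain a birecurrent train track $\sigma_a$ with $\sigma_a \sfcm \sigma$ and $a\in B(\sigma_a)$. Concatenating this splitting sequence with the given sequence $\sigma \sscm \tau$ yields a single sequence $\sigma_a \sscm \tau$ in which each step is a single move. Applying Theorem \ref{splitting} to this combined sequence, the sequence of vertex cycle sets is a reparametrized quasi-geodesic in $\mathcal C(\Sigma)$ with uniform constants, running from $B(\tau)$ through $B(\sigma)$ to $B(\sigma_a)$.

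Since $B(\sigma)$ is an intermediate point on this reparametrized quasi-geodesic, and $a$ lies within the uniformly bounded set $B(\sigma_a)$ at its terminal end, the defining inequalities of a quasi-geodesic give
\[
d(B(\tau), a) \;\geq\; d(B(\tau), B(\sigma))+d(B(\sigma), a) - C' \;\geq\; d(B(\tau), B(\sigma)) - C
\]
for uniform constants $C', C$. This is exactly the required inequality, so $B(\sigma)$ coarsely realizes the distance from $B(\tau)$ to $S(\sigma)$.

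The only point requiring care is the construction of the splitting $\sigma_a \sfcm \sigma$ with $a$ a vertex cycle of $\sigma_a$. This is a standard procedure: at each stage one splits a large branch according to the way $a$ traverses it, and after finitely many splits $a$ traverses each branch at most twice (and with opposite orientations when it does so twice), so $a\in B(\sigma_a)$. Once this is in hand, the rest of the argument is a direct application of the quasi-geodesic property from Theorem \ref{splitting}.
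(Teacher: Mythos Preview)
Your proof is correct and follows essentially the same approach as the paper: extend the given sequence $\sigma\sscm\tau$ by splitting $\sigma$ toward $a$ until $a$ becomes a vertex cycle, and then invoke Theorem~\ref{splitting} to conclude that $B(\sigma)$ lies on the resulting quasi-geodesic from $B(\tau)$ to $a$. The paper's proof is just a terser version of exactly this argument.
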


\begin{proof}
Consider a splitting sequence from $\tau$ to $\sigma$.
It determines a
quasi-geodesic from $B(\tau)$ to $B(\sigma)$. Now if $a\in S(\sigma)$
is any curve, the splitting sequence and the quasi-geodesic can be continued
until $a$ crosses every branch at most once. This extended
quasi-geodesic ends at $a$ and this proves the claim.
\end{proof}

\begin{lemma}\label{splitting cycle}
Let $\tau$ and $\sigma$ be train tracks with $\sigma\sscm\tau$. There exists a constant $C = C(\Sigma)$ such that if $\sigma\sscm\tau$ has $C$ or more moves then $\A(\sigma\sscm\tau;\tau)$ contains a vertex cycle.
\end{lemma}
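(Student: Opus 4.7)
The plan is to prove the contrapositive: assuming $\A := \A(\sigma\sscm\tau;\tau)$ contains no vertex cycle of $\tau$, I will bound the number of moves $N$ in $\sigma\sscm\tau$ by a constant $C = C(\Sigma)$ depending only on the topology of $\Sigma$.

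By Lemma \ref{stationary homeomorphism} the stationary parts of the intermediate tracks $\tau_0 = \tau, \tau_1, \ldots, \tau_N = \sigma$ are all identified via switch-preserving homeomorphisms with the stationary part of $\tau$; accordingly each $\tau_i$ is determined by its active part, embedded in a fixed regular neighborhood $R$ of $|\A|$ in $\Sigma$. Each individual move (split, central split, or subtrack move) is irreversible inside the class of birecurrent train tracks --- a split replaces a large branch by a diagonal that cannot be recovered by subsequent moves, and central splits/subtrack moves either strictly lower $\dim V(\tau_i)$ or pass to a proper face --- so the tracks $\tau_0,\ldots,\tau_N$ are pairwise distinct.

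The heart of the argument is the claim that the number of birecurrent train tracks reachable from $\tau$ by moves supported in $\A$ is bounded by a constant depending only on $\chi(\Sigma)$. The hypothesis that $\A$ contains no vertex cycle implies, using that every measured lamination in $V(\tau)$ is a non-negative combination of vertex cycles, that no nonzero measured lamination in $V(\tau)$ is supported entirely on $|\A|$; equivalently, no recurrent subtrack of $\tau$ is contained in $|\A|$. This forces $|\A|$ to be a forest relative to its boundary with the stationary part. Splits preserve this property --- they replace one bridge edge by another --- while central splits and subtrack moves only reduce the number of active branches. Since the active part has at most $|\tau| \leq 3|\chi(\Sigma)|$ branches and $R$ (being a thickening of a forest with fixed boundary) supports only finitely many isotopy classes of embedded trivalent smooth graphs of this size, the total count of reachable tracks is bounded by some $C(\Sigma)$. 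Together with the distinctness of the $\tau_i$, this gives $N \leq C(\Sigma)$.

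The main obstacle is making the combinatorial count in the last paragraph rigorous: one must establish precisely that a forest-like subsurface of $\Sigma$, with fixed boundary combinatorics inherited from the stationary part, hosts only finitely many isotopy classes of birecurrent train tracks with a bounded number of branches. An alternative route uses the curve graph: by Theorem \ref{splitting} and Lemma \ref{3.8}, a long sequence of moves forces $d_{\mathcal C(\Sigma)}(B(\tau), B(\sigma))$ to grow linearly in $N$, and the ``no vertex cycle in $\A$'' hypothesis should imply that many vertex cycles of $\tau$ persist as vertex cycles of every $\tau_i$ (via the stationary homeomorphism), bounding this distance. A third option is an induction on $|\A|$ using Corollary \ref{first move} to peel off the first move on a designated large active branch, reducing to a shorter sequence with a strictly smaller active set.
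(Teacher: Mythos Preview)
Your approach has a genuine gap at the ``forest'' and finiteness steps. The hypothesis that $\A$ contains no vertex cycle means only that $|\A|$ carries no closed \emph{legal} train path; this does not force $|\A|$ to be a forest as an abstract graph. A loop $\alpha$ based at a switch $u$ where the third half-branch (into $|\S|$) is the large one carries no legal closed curve---the turn $\alpha_1\leftrightarrow\alpha_2$ between the two small half-branches is illegal---yet is plainly not a tree. Once the regular neighbourhood $R$ fails to be a union of disks, there is no a~priori bound on isotopy classes of trivalent graphs in $R$ with a bounded edge count and fixed boundary: arcs can wind around boundary components or handles. So the counting argument does not close, as you in fact anticipate. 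Your second alternative also fails: Theorem~\ref{splitting} gives only a \emph{reparametrized} quasi-geodesic, which can stall in a bounded set for arbitrarily many moves, so there is no lower bound on $d(B(\tau),B(\sigma))$ linear in $N$.

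The paper's argument is entirely different and avoids any isotopy count. Central splits and subtrack moves each strictly reduce the number of branches, so their total number is bounded by a constant depending only on $\Sigma$; hence a long sequence contains many genuine splits. Under a single split the carrying map sends a pair of half-branches to train paths of length exceeding one, and composing many splits forces some branch of $\sigma$ to have image in $\tau$ of arbitrarily large combinatorial length. Since stationary branches map to single branches (Lemma~\ref{stationary homeomorphism}), this long image is the image of an active branch and therefore lies inside $|\A(\sigma\sscm\tau;\tau)|$. Finally, by pigeonhole any sufficiently long legal path in $\tau$ crosses some branch twice in the same direction, and the subpath between two such crossings closes up to a legal loop in $|\A|$; the subtrack it spans is recurrent and hence carries a vertex cycle, which is then contained in $\A$.
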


\begin{proof}
There is a bound, depending only on $\Sigma$, on the number of moves
that are central splits and passing to subtracks. Therefore there will be tracks $\sigma'$ and $\tau'$ in the sequence $\sigma\sscm\tau$ with $\sigma\sscm\sigma'\scm\tau'\sscm\tau$ and $\sigma'\scm\tau'$ having as many moves as we want, provided $C$ is made large.
For each right/left
split there will be two branches that are each mapped to the union of
two branches. Similarly, for each central split there will be two branches that are mapped to the union of three branches. Therefore, by increasing the number of moves we can guarantee
that there is a branch $b$ in $\sigma'$ that is mapped to a legal path
in $\tau'$, and hence $\tau$, of arbitrary
length. Any legal path in $\tau$ that is sufficiently long will contain a
subpath that closes up and that does not cross any branch exactly
once. Thus all branches it crosses are in the active set. There is a
further subpath that closes up and crosses each branch at most
once. This gives a vertex
cycle contained in the active set.

As all
constants will only depend on $\Sigma$ this implies the lemma.
\end{proof}

\begin{lemma}\label{disjoint with bounds}
Let $\tau$, $\sigma_1$ and $\sigma_2$ be  train tracks with $\sigma_1\sscm\tau$ and $\sigma_2\sscm\tau$ disjoint sequences.
Then there exists a  train track
$\sigma\sscm\sigma_i$ for $i=1,2$ with $V(\sigma) = V(\sigma_1) \cap
V(\sigma_2)$ and both $$\min\{d(B(\sigma),B(\sigma_1)),d(B(\sigma),
B(\sigma_2))\}$$ and $$\min\{d(B(\tau),B(\sigma_1)),d(B(\tau),
B(\sigma_2))\}$$ uniformly bounded.
\end{lemma}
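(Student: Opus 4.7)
The plan is to obtain $\sigma$ from Lemma~\ref{disjoint moves} and then bound the distances using Lemma~\ref{splitting cycle} (vertex cycles in active regions), Lemma~\ref{stationary homeomorphism} (lifting via stationary branches), and Lemma~\ref{B} (coarse projection). Lemma~\ref{disjoint moves} immediately supplies a birecurrent $\sigma$ with $V(\sigma) = V(\sigma_1)\cap V(\sigma_2)$, sequences $\sigma\sscm\sigma_i$ of $m_{3-i}$ moves (writing $m_i$ for the length of $\sigma_i\sscm\tau$), and switch-preserving homeomorphisms $\A(\sigma\sscm\sigma_i;\sigma_i) \cong \A(\sigma_{3-i}\sscm\tau;\tau)$. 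Iterating Lemma~\ref{3.8} along $\sigma\sscm\sigma_i$ yields $d(B(\sigma),B(\sigma_i)) \leq K m_{3-i}$ for a universal $K$, so if $\min(m_1,m_2)$ is at most the constant $C$ from Lemma~\ref{splitting cycle}, the conclusion is immediate.

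Assume instead $m_1,m_2 \geq C$. Apply Lemma~\ref{splitting cycle} to each sequence $\sigma_i\sscm\tau$ to produce a vertex cycle $c_i$ of $\tau$ supported in $\A(\sigma_i\sscm\tau;\tau)$. The disjointness hypothesis places the support of $c_i$ in the stationary region of the other sequence $\sigma_{3-i}\sscm\tau$, so by Lemma~\ref{stationary homeomorphism} the same surface curve is also a vertex cycle of $\sigma_{3-i}$. Thus $c_i \in B(\tau) \cap B(\sigma_{3-i})$, which bounds $d(B(\sigma_i),B(\tau))$ by the uniform diameter of $B(\tau)$ and, via the triangle inequality, bounds $d(B(\sigma_1),B(\sigma_2))$.

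To finish, Lemma~\ref{B} applied to $\sigma\sscm\sigma_i$ identifies $B(\sigma)$ as the coarse closest point of $S(\sigma)$ to $B(\sigma_i)$, so $d(B(\sigma),B(\sigma_i)) \asymp d(B(\sigma_i),S(\sigma))$. Since $V(\sigma) = V(\sigma_1)\cap V(\sigma_2)$ gives $S(\sigma) = S(\sigma_1)\cap S(\sigma_2)$, which is itself quasi-convex by Lemma~\ref{3.3}, and $B(\sigma_1)\in S(\sigma_1)$ sits at bounded distance from $B(\sigma_2)\in S(\sigma_2)$, a standard hyperbolicity-and-quasi-convexity argument should yield $\min\{d(B(\sigma_i),S(\sigma))\}$ bounded, giving the desired bound on the minimum of the two distances.

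The hard step will be precisely the last one: in a general $\delta$-hyperbolic space, two quasi-convex sets can meet at an arbitrarily small ``angle'' so that a pair of close points --- one in each --- need not be close to the intersection. The resolution should exploit the special curve-complex structure here: $S(\sigma)$ is itself a curve-complex $S$-set, both $B(\sigma_i)$ arise as projections of the common $B(\tau)$, and the carrying homeomorphisms from Lemma~\ref{disjoint moves} control how the polytopes $V(\sigma_i)$ sit inside $V(\tau)$. Either a projection-composition argument --- using $B(\sigma)\approx \mathrm{proj}_{S(\sigma)}B(\sigma_i)$ together with the proximity of each $B(\sigma_i)$ to $B(\tau)$ --- or a refined vertex-cycle lifting that places some $c_i$ directly in $S(\sigma)$, should provide the required bound.
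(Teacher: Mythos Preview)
Your argument is correct and complete up through the bound on $d(B(\sigma_1),B(\sigma_2))$, and you correctly flag that the final step---passing from this to a bound on $\min_i d(B(\sigma),B(\sigma_i))$---is the real difficulty. Unfortunately neither of your proposed resolutions closes the gap. The projection argument only gives that $\operatorname{proj}_{S(\sigma)}B(\sigma_1)$ and $\operatorname{proj}_{S(\sigma)}B(\sigma_2)$ are close to each other (both coarsely equal to $B(\sigma)$), not that either is close to $B(\sigma_i)$. And your vertex cycle $c_1\in B(\tau)\cap B(\sigma_2)$ is supported, inside $\sigma_2$, precisely in $\A(\sigma\sscm\sigma_2;\sigma_2)$ (this is exactly what the homeomorphism clause in Lemma~\ref{disjoint moves} says), so it does \emph{not} lift further to $\sigma$ and need not lie in $S(\sigma)$. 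Thus as written the proof is incomplete, and the obstacle you name (small-angle intersection of quasi-convex sets) is genuine.

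The paper avoids this problem by \emph{not} committing to the $\sigma$ produced by a single application of Lemma~\ref{disjoint moves}. Instead it first truncates $\sigma_1\sscm\tau$ to an intermediate track $\tau_1$ with $\sigma_1\sscm\tau_1$ having exactly $C$ moves, applies Lemma~\ref{disjoint moves} to $\tau_1$ and $\sigma_2$ over $\tau$ to obtain $\tau_2$, and then applies Lemma~\ref{disjoint moves} a second time to $\sigma_1$ and $\tau_2$ over $\tau_1$ to obtain $\sigma$. The point of the truncation is that now the vertex cycle from Lemma~\ref{splitting cycle} lives in $\A(\sigma_1\sscm\tau_1;\tau_1)\subset\S(\tau_2\sscm\tau_1;\tau_1)$, so it transfers between the \emph{intermediate} tracks $\tau_1$ and $\tau_2$, giving $d(B(\tau_1),B(\tau_2))$ bounded; combined with the $C$-move bounds on $\sigma_1\sscm\tau_1$ and $\sigma\sscm\tau_2$, the triangle inequality finishes. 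In short: the freedom in the statement is the choice of $\sigma$, and the paper exploits it rather than fighting the geometry of a fixed $\sigma$.
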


\begin{proof}
We apply Lemma \ref{disjoint moves} to $\sigma_1\sscm\tau$ and
$\sigma_2\sscm\tau$. In particular we have a train track $\sigma$ and
a sequence $\sigma\sscm\sigma_2$ that has the same number of moves and
splitting moves as $\sigma_1\sscm\tau$ and $V(\sigma)= V(\sigma_1)\cap
V(\sigma_2)$. Let $C$ be the constant from Lemma \ref{splitting
  cycle}. If $\sigma_1\sscm\tau$ has less than $C$ moves then the
distance bound follows from Lemma \ref{3.8}. If $\sigma_2\sscm\tau$
has less than $C$ moves we swap the roles of $\sigma_1$ and $\sigma_2$
and again the lemma follows. Therefore we can assume that both
$\sigma_1\sscm\tau$ and $\sigma_2\sscm\tau$ have at least $C$ moves.

Let $\sigma'$ and $\tau'$ be the tracks in the sequences $\sigma\sscm\sigma_2$ and $\sigma_1\sscm\tau$ that are $C$ moves from $\sigma$ and $\sigma_1$. In particular, by Lemma \ref{disjoint moves}, $\sigma'\sscm\tau'$ with the same number of moves and splitting moves as $\sigma\scm\tau$. 

To bound $d(B(\sigma_1), B(\tau))$ we observe that as
$\sigma_2\sscm\tau$ has more than $C$ moves so by Lemma \ref{splitting
  cycle} there is a vertex cycle $c$ in $|\A(\sigma_2\sscm\tau;\tau)|
\subset \tau$. As $\sigma_1\sscm\tau$ and $\sigma_2\sscm\tau$ are
disjoint it follows that $c$ is in $\S(\sigma_1\sscm\tau;\tau)$ and
hence is vertex cycle in $\sigma_1$. This gives our bound on
$d(B(\sigma_1), B(\tau))$.

$$
\begindc{\commdiag}[30]
\obj(1,3)[13]{$\sigma_1$}
\obj(2,2)[22]{$\tau'$}
\obj(3,1)[31]{$\tau$}
\obj(3,5)[35]{$\sigma$}
\obj(4,4)[44]{$\sigma'$}
\obj(5,3)[53]{$\sigma_2$}

\mor{13}{22}{}
\mor{22}{31}{}
\mor{35}{44}{}
\mor{44}{53}{}
\mor{53}{31}{}
\mor{44}{22}{}
\mor{35}{13}{}
\enddc
$$

More generally, exactly the same argument works on any diamond-shaped
diagram when the arrows represent $\geq C$ disjoint moves to show that
the distance in $\mathcal C(\Sigma)$ between the vertex cycles of the
bottom train track and the train tracks on the sides is uniformly
bounded. Using the upper diamond in the diagram plus symmetry between
$\sigma_1$ and $\sigma_2$ we
conclude that $B(\sigma_1),B(\tau'),B(\tau),B(\sigma_2),B(\sigma')$
are all within uniform distance of each other. 
Finally we observe that as $\sigma\sscm\sigma'$ is exactly
$C$ moves we have a uniform bound on $d(B(\sigma), B(\sigma'))$ by
Lemma \ref{3.8}.
\end{proof}

When $A$ is a geodesic lamination on $\Sigma$, we denote by $M(A)$ the
lamination obtained from $A$ by removing all isolated non-closed
leaves. Thus $M(A)$ consists of closed leaves and of minimal
components and it is the maximal sublamination of $A$ that supports a
transverse measure. We call $M(A)$ the {\it measurable part} of $A$.

\begin{lemma}\label{measurable parts}
Suppose sequences $a_i$, $b_i$ of closed geodesics converge to
geodesic laminations $A$, $B$ respectively in the Hausdorff
topology. Assume
\begin{enumerate}[(i)]
\item both sequences go to infinity in the curve complex $\mathcal
  C(\Sigma)$, and
\item $d(a_i,b_i)$ is uniformly bounded.
\end{enumerate}
Then $A$ and $B$ have equal measurable parts, i.e. $M(A)=M(B)$.
\end{lemma}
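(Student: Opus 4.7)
The plan is to invoke Klarreich's identification $\partial \mathcal{C}(\Sigma) = \mathcal{EL}$ to produce a single ending lamination $\lambda$ contained in both $A$ and $B$, and then to show that the minimal filling property of $\lambda$ forces $M(A) = \lambda = M(B)$.

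To produce $\lambda$, I would first regard each $a_i$ as a projective measured lamination (using the counting measure on the closed geodesic). By compactness of $\mathcal{PML}$, after passing to a subsequence we may assume $a_i \to \mu_A \in \mathcal{PML}$. Since $a_i \to \infty$ in $\mathcal{C}(\Sigma)$, Klarreich's theorem implies $\mu_A \in \mathcal{FPML}$ and that the underlying geodesic lamination $\lambda_A := [\mu_A]$ is an ending lamination (in particular, minimal and filling). Moreover, weak-$*$ convergence of measures forces every leaf of $\lambda_A$ to be a Hausdorff limit of arcs in the $a_i$, so $\lambda_A \subset A$. Passing to a further subsequence yields $b_i \to \mu_B \in \mathcal{FPML}$ with $\lambda_B := [\mu_B] \subset B$ and $\lambda_B \in \mathcal{EL}$.

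Next I would argue $\lambda_A = \lambda_B$. Because $d(a_i,b_i)$ is uniformly bounded in $\mathcal{C}(\Sigma)$ and both sequences escape every compact set, the two sequences converge to the same point of the Gromov boundary $\partial \mathcal{C}(\Sigma)$. Klarreich's homeomorphism $\partial \mathcal{C}(\Sigma) \cong \mathcal{EL}$ identifies this common boundary point with $\lambda_A$ (from the $a_i$ side) and with $\lambda_B$ (from the $b_i$ side), so $\lambda_A = \lambda_B$. Call this common ending lamination $\lambda$.

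Finally, I would show $M(A) = \lambda$; the same argument applied to $B$ gives $M(B) = \lambda$, completing the proof. The inclusion $\lambda \subset M(A)$ is clear because $\lambda$ supports the transverse measure $\mu_A$ and is contained in $A$. For the reverse inclusion, since $\lambda$ is filling its complementary components in $\Sigma$ are ideal polygons (topological disks and once-punctured disks), and no essential geodesic of $\Sigma$ lies entirely in such a region. Hence any leaf of $A$ not in $\lambda$ must be an isolated diagonal leaf whose two ends spiral onto $\lambda$. Any additional minimal component of $A$ would itself have to be a minimal lamination disjoint from $\lambda$ and supported in the union of these complementary regions, which is impossible by the same spiraling argument (its closure would meet $\lambda$). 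Therefore $\lambda$ is the unique minimal component of $A$ and $M(A) = \lambda$. The main obstacle is the careful double use of Klarreich's theorem: once to promote a $\mathcal{PML}$-limit of curves diverging in $\mathcal{C}(\Sigma)$ to a point of $\mathcal{EL}$, and once to conclude that bounded $\mathcal{C}(\Sigma)$-distance between two such diverging sequences forces equality of the resulting ending laminations.
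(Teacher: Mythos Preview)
Your proof is correct but takes a substantially different route from the paper's. The paper gives a direct, elementary four-line argument that avoids Klarreich's theorem entirely: reduce to the case $d(a_i,b_i)\le 1$ (so $a_i$ and $b_i$ are disjoint for each $i$), observe that then the Hausdorff limits $A$ and $B$ have no transverse intersections, and note that any minimal component $C$ of $M(A)$ not lying in $M(B)$ must then be disjoint from $B$; this forces $b_i$ for large $i$ to miss the supporting subsurface of $C$, so $b_i$ stays a bounded distance from $\partial\,\mathrm{Supp}(C)$ (or from $C$ itself if $C$ is a curve), contradicting $b_i\to\infty$.

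Your argument instead proves the stronger statement that $M(A)=M(B)$ is a \emph{single filling ending lamination}. This is fine, but it leans on a nontrivial consequence of Klarreich's work: that a sequence of curves going to infinity in $\mathcal C(\Sigma)$ has only filling $\mathcal{PML}$-accumulation points (equivalently, that any such sequence has a subsequence converging in $\partial\mathcal C(\Sigma)$). This is true and standard, but it is a deeper input than anything the paper's proof uses. One small point worth tightening: you should be explicit that you are invoking this fact, not merely the bare identification $\partial\mathcal C(\Sigma)\cong\mathcal{EL}$, since in a non-proper hyperbolic space going to infinity does not a priori guarantee subsequential boundary convergence.

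What each approach buys: the paper's argument is self-contained and yields $M(A)=M(B)$ without establishing that this common measurable part is filling --- and in the later shortening argument the paper uses only that $M(A)=M(B)$ is carried by $\sigma$, not that it fills. Your stronger conclusion would in fact shortcut case~(ii) of that later lemma, but the elementary route keeps the logical dependencies minimal and does not import Klarreich's theorem into what is really a basic fact about Hausdorff limits of disjoint curves.
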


\begin{proof} It suffices to prove the claim when $d(a_i,b_i)\leq 1$
  for all $i$. Then $A$ and $B$ have no transverse intersections. If
  $C$ is a minimal component of $M(A)$ that does not belong to $M(B)$, then it
  does not belong to $B$ either, and so for large $i$ the curve $b_i$
  is disjoint from the subsurface supporting $C$ (which may be an annulus), contradicting (i). \end{proof}

\subsection{Train tracks for cells}
Given train tracks $\tau$, $\sigma_1$ and $\sigma_2$ with $\sigma_i\sscm\tau$ we would like to find a fourth track $\sigma$ with $V(\sigma) = V(\sigma_1)\cap V(\sigma_2)$. If all three tracks are maximal and the relative interior of  $V(\sigma_1) \cap V(\sigma_2)$ is open in $V(\tau)$ then this is due to Hamenst\"adt \cite{ursula}. We begin with two preliminary results.

\begin{lemma}\label{subtrack}
Suppose $\sigma\sscm\tau$ and $\tau'\subset\tau$ is a subtrack. Then there exists a subtrack $\sigma'$ of $\sigma$ with $V(\sigma') = V(\sigma) \cap V(\tau')$, $\sigma'\sscm\tau'$ and the number of splitting moves not exceeding the number of splitting moves in $\sigma \sscm\tau$.
\end{lemma}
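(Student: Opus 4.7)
The plan is to induct on $m$, the number of moves in $\sigma\sscm\tau$. The base case $m=0$ is immediate: since $\sigma=\tau$, take $\sigma'=\tau'$ and note $0 \le 0 + (\dim V(\tau') - \dim V(\sigma'))$.

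For the inductive step, suppose $m\ge 1$, and let $\tau_1$ be the track produced by the first move of $\sigma\sscm\tau$, so that $\sigma\sscm\tau_1$ consists of $m-1$ moves. The key intermediate object will be a birecurrent subtrack $\tau_1'$ of $\tau_1$ with $V(\tau_1')=V(\tau_1)\cap V(\tau')$. Such a $\tau_1'$ exists because $V(\tau_1)\cap V(\tau')$ is a face of $V(\tau_1)$: writing $V(\tau')=V(\tau)\cap\bigcap_{a\notin\tau'}\{w_a=0\}$, each linear functional $w_a$ is nonnegative on $V(\tau_1)\subset V(\tau)$, so $\{w_a=0\}$ cuts out a face of $V(\tau_1)$. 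Since a single move drops $\dim V$ by at most one, $k:=\dim V(\tau')-\dim V(\tau_1')\in\{0,1\}$. I claim that $\tau_1'\sscm\tau'$ can be realized in at most $1+k$ moves. Granting the claim, applying the inductive hypothesis to $\sigma\sscm\tau_1$ (with $m-1$ moves) and the birecurrent subtrack $\tau_1'\subset\tau_1$ produces $\sigma'\subset\sigma$ birecurrent with $V(\sigma')=V(\sigma)\cap V(\tau_1')$ and $\sigma'\sscm\tau_1'$ using at most $(m-1)+(\dim V(\tau_1')-\dim V(\sigma'))$ moves. Since $V(\sigma)\subset V(\tau_1)$, this gives $V(\sigma')=V(\sigma)\cap V(\tau')$, and concatenating with the $\tau_1'\sscm\tau'$ sequence yields $\sigma'\sscm\tau'$ in at most $m+(\dim V(\tau')-\dim V(\sigma'))$ moves, closing the induction.

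The main technical content lies in verifying the claim about $\tau_1'\sscm\tau'$, which proceeds by case analysis on the single move $\tau_1\sscm\tau$. When it is a subtrack move removing a branch $a$: if $a\notin\tau'$ then $\tau_1'$ is isomorphic to $\tau'$ (zero moves); if $a\in\tau'$ then $\tau_1'$ is the maximal recurrent subtrack of $\tau'\setminus\{a\}$, reached from $\tau'$ by a single subtrack move. When it is a split or central split along a large branch $b$: if the active branches of the move are disjoint from the branches of $\tau'$, then Lemma \ref{stationary homeomorphism} provides a carrying isomorphism identifying $\tau'$ with a subtrack of $\tau_1$, so $\tau_1'\cong\tau'$ and zero moves suffice; if $b$ is also a large branch of $\tau'$, performing the analogous split or central split of $\tau'$ produces $\tau_1'$ in one move. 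The delicate remaining subcase occurs when $b\in\tau'$ but $b$ is not large in $\tau'$ because some small half-branch adjacent to $b$ is absent from $\tau'$, causing $b$ to be absorbed into a longer branch of $\tau'$; here an auxiliary subtrack move on $\tau'$ first reshapes the configuration near $b$ so that the intended split becomes available, giving $\tau_1'$ in at most two moves, still within the bound $1+k\le 2$. I expect this last subcase, together with the bookkeeping needed to verify birecurrence of each intermediate track and to confirm that the constructed $\tau_1'$ really does equal the polytope-theoretic face $V(\tau_1)\cap V(\tau')$, to be the main hurdle.
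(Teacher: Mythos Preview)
Your inductive framework is exactly the paper's: reduce to a single move $\tau_1\sscm\tau$, produce the subtrack $\tau_1'\subset\tau_1$ with $V(\tau_1')=V(\tau_1)\cap V(\tau')$, and bound the moves in $\tau_1'\sscm\tau'$. Your arithmetic with $k=\dim V(\tau')-\dim V(\tau_1')\in\{0,1\}$ and the target bound $1+k$ is correct, and your treatment of the subtrack move and of the case ``$b$ is large in $\tau'$'' matches the paper's cases (1) and (2).

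The gap is in your ``delicate remaining subcase'' (c). Your proposed mechanism --- an auxiliary subtrack move on $\tau'$ to make the split available, followed by the split --- is not what happens, and trying to push it through will not yield a clean $\sscm$-sequence. The paper's analysis splits this into three subcases depending on \emph{which} adjacent half-branches of $b$ are absent from $\tau'$, and in none of them is a split of $\tau'$ ever performed:
\begin{itemize}
\item If the half-branches that become \emph{large} after the split are present in $\tau'$ but one of the others is absent (paper's case (5)), then $\tau'$ is already isotopic to a subtrack of $\tau_1$, so $\tau_1'=\tau'$ and zero moves suffice.
\item If one of the half-branches that becomes large is absent (paper's cases (3),(4)), then the carrying map $\tau_1\to\tau$ restricts to a switch-preserving homeomorphism on $\tau_1'$, so $\tau_1'$ is actually (isotopic to) a subtrack of $\tau$ itself; hence $\tau_1'\sscm\tau'$ consists purely of subtrack moves, and their number is $\dim V(\tau')-\dim V(\tau_1')=k$.
\end{itemize}
So the correct bound in case (c) is $\max(0,k)\le 1$, not $2$, and it comes from recognizing that the split either fixes $\tau'$ or collapses back to a subtrack of $\tau$ --- never from reshaping $\tau'$ to accept a split. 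Once you replace your case (c) with this dichotomy, the proof is complete and identical to the paper's.
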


\begin{proof}
We first assume that $\sigma\sscm\tau$ is a single move. The general case will follow by induction.

The intersection $V(\sigma) \cap V(\tau')$ will be a face of $V(\sigma)$ and hence there will be a subtrack $\sigma' \subset \sigma$ with $V(\sigma') = V(\sigma) \cap V(\tau')$. To show that $\sigma'\sscm\tau'$ there are several cases for each type of move in $\sigma\sscm\tau$.
\begin{enumerate}[(1)]
\item $\sigma\sscm\tau$ is a subtrack move. Then $\sigma'=\sigma\cap\tau'$ so $\sigma'$ is a subtrack of $\tau'$.

\item $\sigma\scm\tau$ is a split or central split along a large branch $b$ and $\tau'$ contains $b$ and all its adjacent branches. Then $\sigma'\scm\tau'$ is a single move on the same branch $b$.

\item $\sigma\scm\tau$ is a split along $b$ and one or more of the two
  large half branches adjacent to $b$ in $\sigma$ is not in
  $\tau'$. Then the restriction of the carrying map $\sigma\scm\tau$
  to $\sigma'$ will be a switch preserving homeomorphism so $\sigma'$
  is a subtrack of $\tau$. See Figure \ref{split subtrack}.

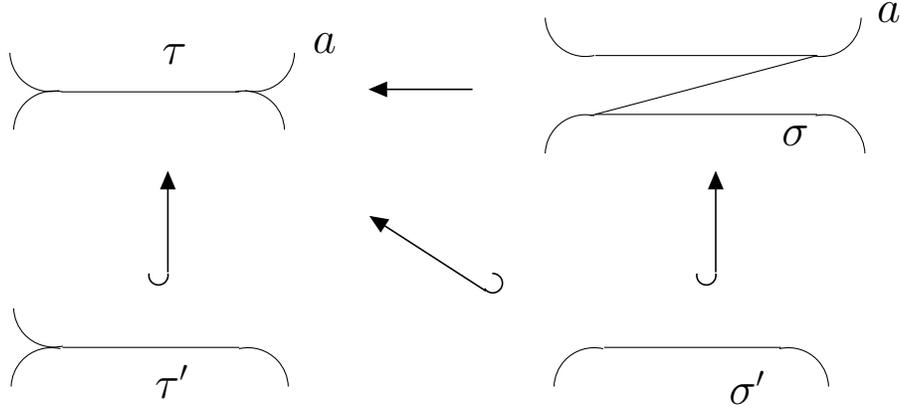
\begin{figure}[h]
\begin{center}

 \begin{tikzpicture}[scale=0.5,y=-1cm]
\sf
\draw[black] (11.49863,7.62322) +(103:1.06892) arc (103:2:1.06892);
\draw[black] (6.16803,14.42322) +(77:1.06892) arc (77:178:1.06892);
\draw[black] (6.10137,16.57678) +(-77:1.06892) arc (-77:-178:1.06892);
\draw[black] (11.33197,16.57678) +(-103:1.06892) arc (-103:-2:1.06892);
\draw[black] (25.69863,16.57678) +(-103:1.06892) arc (-103:-2:1.06892);
\draw[black] (20.5347,16.57678) +(-77:1.06892) arc (-77:-178:1.06892);
\draw[black] (6.16803,9.74344) +(-77:1.06892) arc (-77:-178:1.06892);
\draw[black] (11.23197,9.74344) +(-103:1.06892) arc (-103:-2:1.06892);
\draw[black] (6.06803,7.62322) +(77:1.06892) arc (77:178:1.06892);
\draw[black] (6.4,8.7) -- (11.1,8.7);
\draw[black] (20.30137,6.68989) +(77:1.06892) arc (77:178:1.06892);
\draw[black] (20.30137,10.37678) +(-77:1.06892) arc (-77:-178:1.06892);
\draw[black] (26.5653,6.68989) +(103:1.06892) arc (103:2:1.06892);
\draw[black] (26.6653,10.37678) +(-103:1.06892) arc (-103:-2:1.06892);
\draw[black] (26.46667,9.3) -- (20.56667,9.3);
\draw[black] (26.46667,7.73333) -- (20.56667,7.73333);
\draw[black] (26.46667,7.73333) -- (20.43333,9.33333);
\draw[semithick,black] (17.8375,13.79097) +(-93:0.25809) arc (-93:147:0.25809);
\draw[semithick,arrows=-triangle 45,black] (17.65556,14) -- (14.55556,12);
\draw[semithick,black] (8.95,13.58333) +(-11:0.25495) arc (-11:191:0.25495);
\draw[semithick,arrows=-triangle 45,black] (9.2,13.5) -- (9.2,10.8);
\draw[semithick,black] (23.51667,13.58333) +(-11:0.25495) arc (-11:191:0.25495);
\draw[semithick,arrows=-triangle 45,black] (23.76667,13.5) -- (23.76667,10.8);
\draw[semithick,arrows=-triangle 45,black] (17.3,8.63333) -- (14.53333,8.63333);
\draw[black] (11.1,15.5) -- (6.4,15.5);
\draw[black] (25.5,15.5) -- (20.8,15.5);
\path (25.23333,10.13333) node[text=black,anchor=base west] {\fontsize{16.0}{19.2}\selectfont{}$\sigma$};
\path (8.76667,7.96667) node[text=black,anchor=base west] {\fontsize{16.0}{19.2}\selectfont{}$\tau$};
\path (8.56667,16.83333) node[text=black,anchor=base west] {\fontsize{16.0}{19.2}\selectfont{}$\tau'$};
\path (12.76667,7.66667) node[text=black,anchor=base west] {\fontsize{16.0}{19.2}\selectfont{}$a$};
\path (27.76667,6.83333) node[text=black,anchor=base west] {\fontsize{16.0}{19.2}\selectfont{}$a$};
\path (23.83333,17) node[text=black,anchor=base west] {\fontsize{16.0}{19.2}\selectfont{}$\sigma'$};

\end{tikzpicture}%

\caption{If the branch $a$ is removed in $\tau'$ then it also must be removed in $\sigma'$. However, then both small branches adjacent to $a$ must be removed and $\sigma'$ will be a subtrack of $\tau$.}
\label{split subtrack}
\end{center}
\end{figure}

\item $\sigma\scm\tau$ is a central split and one or more of the half branches adjacent to $b$ in $\tau$ is not in $\tau'$. Then as in (3) $\sigma'$ is a subtrack of $\tau$.

\item $\sigma\scm\tau$ is a split and $\tau'$ contains both of the
  large half branches adjacent to $b$ in $\sigma$ and does not contain one or more
  of the two adjacent small half branches. In this case $\tau'$ is
  isotopic to a subtrack of $\sigma$ and $\sigma' = \tau'$. See Figure
  \ref{case 5}.

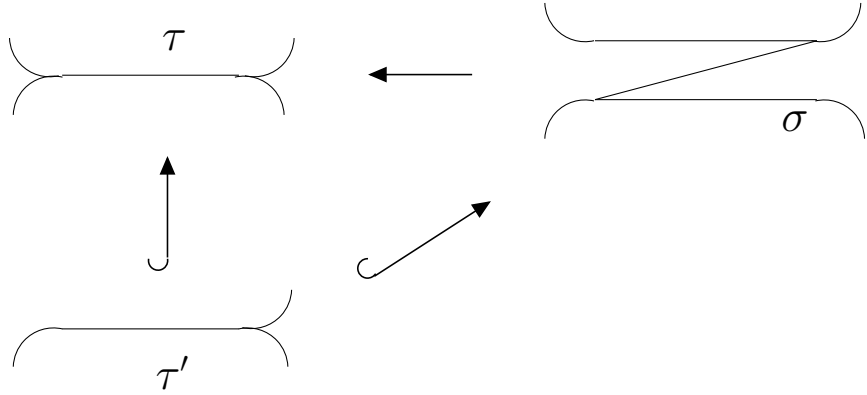
\begin{figure}[h]
\begin{center}
 \begin{tikzpicture}[scale=0.5,y=-1cm]
\sf
\draw[black] (11.49863,7.62322) +(103:1.06892) arc (103:2:1.06892);
\draw[black] (11.33197,16.44344) +(-103:1.06892) arc (-103:-2:1.06892);
\draw[black] (11.43197,14.32322) +(103:1.06892) arc (103:2:1.06892);
\draw[semithick,black] (8.95,13.58333) +(-11:0.25495) arc (-11:191:0.25495);
\draw[semithick,black] (14.51806,13.79097) +(-87:0.25809) arc (-87:-327:0.25809);
\draw[black] (6.16803,16.44344) +(-77:1.06892) arc (-77:-178:1.06892);
\draw[black] (6.16803,9.74344) +(-77:1.06892) arc (-77:-178:1.06892);
\draw[black] (11.23197,9.74344) +(-103:1.06892) arc (-103:-2:1.06892);
\draw[black] (6.06803,7.62322) +(77:1.06892) arc (77:178:1.06892);
\draw[black] (6.4,8.65) -- (11.1,8.65);
\draw[black] (20.30137,6.68989) +(77:1.06892) arc (77:178:1.06892);
\draw[black] (20.30137,10.37678) +(-77:1.06892) arc (-77:-178:1.06892);
\draw[black] (26.5653,6.68989) +(103:1.06892) arc (103:2:1.06892);
\draw[black] (26.6653,10.37678) +(-103:1.06892) arc (-103:-2:1.06892);
\draw[black] (26.46667,9.3) -- (20.56667,9.3);
\draw[black] (26.46667,7.73333) -- (20.56667,7.73333);
\draw[black] (26.46667,7.73333) -- (20.56667,9.3);
\draw[semithick,arrows=-triangle 45,black] (17.3,8.63333) -- (14.53333,8.63333);
\draw[black] (11.1,15.4) -- (6.4,15.4);
\draw[semithick,arrows=-triangle 45,black] (14.7,14) -- (17.8,12);
\draw[semithick,arrows=-triangle 45,black] (9.2,13.5) -- (9.2,10.8);
\path (25.23333,10.13333) node[text=black,anchor=base west] {\fontsize{16.0}{19.2}\selectfont{}$\sigma$};
\path (8.76667,7.96667) node[text=black,anchor=base west] {\fontsize{16.0}{19.2}\selectfont{}$\tau$};
\path (8.6,17) node[text=black,anchor=base west] {\fontsize{16.0}{19.2}\selectfont{}$\tau'$};

\end{tikzpicture}%

\caption{Case where $\tau'$ is a subtrack of $\sigma$.}
\label{case 5}
\end{center}
\end{figure}
\end{enumerate}
\end{proof}

\begin{lemma}\label{first step}
Let $\tau$, $\sigma_1$ and $\sigma_2$ be train tracks such
$\sigma_1\scm\tau$ and $\sigma_2\scm\tau$ are not disjoint. Then
there exist train tracks $\tau_1 \subset \sigma_1$,
$\tau_2\subset\sigma_2$ and $\tau'$ such that $\tau'\scm\tau$ and
$\tau_i\sscm\tau'$ with each sequence $\tau_i\sscm\tau'$ having less
splitting moves than $\sigma_i\scm\tau$. Furthermore $V(\tau_1) \cap
V(\tau_2) = V(\sigma_1) \cap V(\sigma_2)$.
\end{lemma}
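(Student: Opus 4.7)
The plan is to extract a common first move on a large active branch of $\tau$ and split the argument into an easy case, where that first move already agrees on both sequences, and a harder case handled by a central-split plus subtrack construction.

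First I would use the hypothesis together with Lemma \ref{large branches} to obtain a large branch $b$ of $\tau$ lying in both active sets $\A(\sigma_i\scm\tau;\tau)$. Applying Corollary \ref{first move} to each $\sigma_i\scm\tau$ with this $b$ produces birecurrent tracks $\tau_i$ such that $\tau_i\scm\tau$ is a single split or central split on $b$ and $\sigma_i\scm\tau_i$ has $n_i-1$ moves, where $n_i$ is the move count of $\sigma_i\scm\tau$. Because the input sequences contain no subtrack moves and the rearrangement in Corollary \ref{first move} reduces in this case to an application of Corollary \ref{commuting moves}, the rearranged sequences are again $\scm$-sequences.

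If $\tau_1=\tau_2$, set $\tau':=\tau_1$ and $\sigma'_i:=\sigma_i$; all required properties hold immediately. If $\tau_1\neq\tau_2$, then the three splits $\theta_1,\theta_2,\theta_c$ of $\tau$ at $b$ (left, right, central) must all be recurrent (case (S1) of the splitting trichotomy), since otherwise the single-move outcome on $b$ is uniquely determined. I would set $\tau':=\theta_c$, a single central split of $\tau$. Since $V(\theta_c)=V(\theta_1)\cap V(\theta_2)$, a case check over the possibilities $(\tau_1,\tau_2)\in\{\theta_1,\theta_2,\theta_c\}^2$ with $\tau_1\neq\tau_2$ shows $V(\sigma_1)\cap V(\sigma_2)\subset V(\theta_c)$. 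Apply Lemma \ref{subtrack} to each $\sigma_i\scm\tau_i$ with the birecurrent subtrack $\theta_c\subset\tau_i$ to obtain birecurrent subtracks $\sigma'_i\subset\sigma_i$ with $V(\sigma'_i)=V(\sigma_i)\cap V(\theta_c)$. Then $V(\sigma'_1)\cap V(\sigma'_2)=V(\sigma_1)\cap V(\sigma_2)\cap V(\theta_c)=V(\sigma_1)\cap V(\sigma_2)$.

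The delicate point, which I expect to be the main obstacle, is to verify that each $\sigma'_i\scm\tau'$ is a genuine sequence of splits and central splits (rather than merely the $\sscm$-sequence that Lemma \ref{subtrack} provides) and has strictly fewer than $n_i$ moves. Subtrack moves can be introduced by Lemma \ref{subtrack}'s cases (3)--(4), precisely when a split in $\sigma_i\scm\tau_i$ has a large adjacent half-branch mapping to the diagonal being removed to pass from $\tau_i$ to $\theta_c$. I would resolve this by passing to a further subtrack $\sigma''_i\subset\sigma'_i$ obtained by preemptively carrying out those subtrack moves, so that $\sigma''_i\scm\theta_c$ becomes a pure split/central-split sequence whose length is bounded by the count of the non-subtrack moves in the Lemma \ref{subtrack} output, which is at most $n_i-1$. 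The identity $V(\sigma''_1)\cap V(\sigma''_2)=V(\sigma_1)\cap V(\sigma_2)$ is preserved because each branch removed in the pre-applied subtrack moves corresponds to a weight coordinate that must vanish on any lamination of $V(\sigma_1)\cap V(\sigma_2)\subset V(\theta_c)$.
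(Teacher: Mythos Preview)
Your approach is essentially the paper's: use Lemma \ref{large branches} to find a common large active branch $b$, use Corollary \ref{first move} to make the first move in each sequence a move on $b$, set $\tau'$ to that common first move when they agree, and otherwise take $\tau'$ to be the central split and invoke Lemma \ref{subtrack}. The paper handles the mismatched case identically, noting (as you do) that $V(\tau')\supset V(\sigma_1)\cap V(\sigma_2)$ so the intersection is preserved.

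You go beyond the paper in flagging that Lemma \ref{subtrack} as stated only yields $\sigma'_i\sscm\tau'$ with a move bound involving a dimension drop, whereas the conclusion needs $\sigma'_i\scm\tau'$ with strictly fewer moves; the paper simply asserts this without comment. Your proposed patch, however, is not obviously valid as written: ``preemptively carrying out the subtrack moves'' does not in general commute past subsequent splits, so one cannot just reorder a $\sscm$-sequence into a shorter $\scm$-sequence at a smaller subtrack. A cleaner justification would trace the induction in the proof of Lemma \ref{subtrack}, using that one is removing a single small branch (the diagonal) from $\tau_i$, and check that only cases (2) and (5) arise there --- these keep the sequence in the $\scm$ world and contribute at most one move per input move --- while cases (3)--(4) (the sources of subtrack moves) do not occur. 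Your observation that the non-subtrack moves in the Lemma \ref{subtrack} output number at most $n_i-1$ is correct and is exactly what the paper is implicitly using.
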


$$
\begindc{\commdiag}[30]
\obj(1,2)[12]{$\sigma_1$}
\obj(1,4)[14]{$\tau_1$}
\obj(3,2)[32]{$\sigma_2$}
\obj(3,4)[34]{$\tau_2$}
\obj(2,1)[21]{$\tau$}
\obj(2,3)[23]{$\tau'$}

\mor{12}{21}{}
\mor{32}{21}{}
\mor{23}{21}{}
\mor{14}{23}{}
\mor{34}{23}{}
\mor{14}{12}{}[1,5]
\mor{34}{32}{}[1,5]
\enddc
$$

\begin{proof}
If there is branch that is active in both sequences then by Lemma
\ref{large branches} there must be a large branch $b$ that is active
in both sequences. By Lemma \ref{first move} we can assume that the
first move in both sequences is along $b$. If it is the same move then
$\tau'$ is the track obtained from this first move and $\tau_i =
\sigma_i$. If not than we let $\tau'$ be the central split on
$b$. First suppose that both $\sigma_i\to\tau$ consist of a single
move.  For at least one of them, say $\sigma_1\scm\tau$, this move
will be a right (or left) split on $b$ and $\tau'$ will be obtained
from $\sigma_1$ by removing the diagonal, and we set
$\tau_1=\tau'$. If $\sigma_2\to \tau$ is a left (or right) split we
similarly put $\tau_2=\tau'$. Finally, if $\sigma_2\to \tau$ is the
central split on $b$ we have $\tau_2=\tau'=\sigma_2$.

In general, when $\sigma_i\to\tau$ have more than one move, we use the
above paragraph for the first move and then apply
Lemma
\ref{subtrack}.

Note that in all cases $V(\tau') \supset V(\sigma_1) \cap V(\sigma_2)$ and $V(\tau_i) = V(\sigma_i) \cap V(\tau')$. It follows that $V(\tau_1) \cap V(\tau_2) = V(\sigma_1) \cap V(\sigma_2)$.
\end{proof}

\begin{prop}\label{ursula++}
Let $\tau$, $\sigma_1$ and $\sigma_2$ be train tracks such
that $\sigma_i \sscm \tau$ for $i=1,2$. Assume that $V(\sigma_1) \cap
V(\sigma_2) \neq \emptyset$.
Then there exist  train
tracks $\sigma^\pm$ and subtracks $\sigma_i'\subset\sigma_i$ such that 
\begin{enumerate}[(a)]
\item $\sigma^-\sscm\sigma_i'\sscm\sigma^+\sscm\tau$ for $i=1,2$;
\item $\sigma'_1\sscm\sigma^+$ and $\sigma'_2\sscm\sigma^+$ are disjoint;
\item
$V(\sigma^-)=V(\sigma_1)\cap V(\sigma_2)$; 
\item $\min\{d(B(\sigma_1),B(\sigma^-)),d(B(\sigma_2),B(\sigma^-))\}$
  is uniformly bounded;
  \item $\min\{d(B(\sigma_1),B(\sigma^+)),d(B(\sigma_2),B(\sigma^+))\}$
  is uniformly bounded.
\end{enumerate} 
\end{prop}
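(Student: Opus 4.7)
The plan is to reduce to a setting where Lemma \ref{first step} (which requires $\scm$, not $\sscm$) and Lemma \ref{disjoint with bounds} can be applied in tandem. The first move is to eliminate all subtrack moves at $\tau$. Let $\tau'$ be the birecurrent subtrack of $\tau$ whose cell $V(\tau')$ is the smallest face of $V(\tau)$ containing the nonempty polytope $V(\sigma_1)\cap V(\sigma_2)$. By minimality, $V(\sigma_1)\cap V(\sigma_2)$ meets the relative interior of $V(\tau')$. Lemma \ref{subtrack} then produces birecurrent subtracks $\sigma_i^*\subset\sigma_i$ with $V(\sigma_i^*)=V(\sigma_i)\cap V(\tau')$ and $\sigma_i^*\sscm\tau'$. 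Since $V(\sigma_i^*)\supset V(\sigma_1)\cap V(\sigma_2)$ meets the interior of $V(\tau')$, Lemma \ref{surjective} upgrades this to $\sigma_i^*\scm\tau'$, so we are now in the $\scm$ regime.

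Next I would iterate Lemma \ref{first step}. Set $\tau^{(0)}=\tau'$ and $\sigma_i^{(0)}=\sigma_i^*$. While $\A(\sigma_1^{(j)}\scm\tau^{(j)};\tau^{(j)})\cap\A(\sigma_2^{(j)}\scm\tau^{(j)};\tau^{(j)})\neq\emptyset$, the lemma produces $\tau^{(j+1)}\scm\tau^{(j)}$ and subtracks $\sigma_i^{(j+1)}\subset\sigma_i^{(j)}$ with $\sigma_i^{(j+1)}\scm\tau^{(j+1)}$ having strictly fewer moves than $\sigma_i^{(j)}\scm\tau^{(j)}$, and with
$$V(\sigma_1^{(j+1)})\cap V(\sigma_2^{(j+1)})=V(\sigma_1^{(j)})\cap V(\sigma_2^{(j)})=V(\sigma_1)\cap V(\sigma_2).$$
Since both move-counts drop each step, after finitely many iterations we arrive at $\tau^*=\tau^{(N)}$ and $\sigma_i'=\sigma_i^{(N)}$ with $\A(\sigma_1'\scm\tau^*;\tau^*)\cap\A(\sigma_2'\scm\tau^*;\tau^*)=\emptyset$ (this subsumes the possibility that one of the sequences has become trivial, in which case its active set is already empty).

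Finally, Lemma \ref{disjoint with bounds} applied to $\sigma_1',\sigma_2'\sscm\tau^*$ yields a birecurrent $\sigma^-$ with $\sigma^-\sscm\sigma_i'$ for $i=1,2$, with $V(\sigma^-)=V(\sigma_1')\cap V(\sigma_2')=V(\sigma_1)\cap V(\sigma_2)$, and with $\min\{d(B(\sigma^-),B(\sigma_1')),d(B(\sigma^-),B(\sigma_2'))\}$ uniformly bounded. Set $\sigma^+=\tau^*$. Condition (b) is then immediate. For (a), the chain $\sigma^+=\tau^*\scm\tau'\subset\tau$ gives $\sigma^+\sscm\tau$, while $\sigma^-\sscm\sigma_i'\sscm\sigma^+$ holds directly from the construction. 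For (c), since each $\sigma_i'$ is a subtrack of $\sigma_i$ obtained by iterated subtrack passes, $B(\sigma_i')\subset B(\sigma_i)$, so $d(B(\sigma^-),B(\sigma_i))\le d(B(\sigma^-),B(\sigma_i'))$ for each $i$, and the bound transfers.

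The main obstacle is the preliminary reduction from $\sscm$ to $\scm$: the active-branch bookkeeping that powers Lemma \ref{first step} and Lemma \ref{large branches} is set up only for sequences of splits and central splits, so subtrack moves must be stripped away at the start. Choosing $\tau'$ as the \emph{minimal} face containing the intersection is what allows Lemma \ref{surjective} to promote $\sigma_i^*\sscm\tau'$ to $\sigma_i^*\scm\tau'$; any larger subtrack would leave residual subtrack moves and the inductive scheme would stall.
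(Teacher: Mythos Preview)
Your argument is correct and follows the same approach as the paper: reduce to the $\scm$ regime by passing to the minimal subtrack of $\tau$ whose cell contains $V(\sigma_1)\cap V(\sigma_2)$, then iterate Lemma \ref{first step} until the active branch sets are disjoint, and finish with Lemma \ref{disjoint with bounds}. The only cosmetic difference is that the paper re-enters the subtrack step after each application of Lemma \ref{first step}, whereas you do it once up front; since Lemma \ref{first step} already outputs $\scm$ sequences with strictly fewer moves, your linear iteration terminates just as well, and your explicit verification of (c) via $B(\sigma_i')\subset B(\sigma_i)$ is a detail the paper leaves implicit.
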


$$
\begindc{\commdiag}[30]
\obj(1,3)[13]{$\sigma_1$}
\obj(2,4)[24]{$\sigma_1'$}
\obj(3,1)[31]{$\tau$}
\obj(3,3)[32]{$\sigma^+$}
\obj(3,5)[35]{$\sigma^-$}
\obj(4,4)[44]{$\sigma_2'$}
\obj(5,3)[53]{$\sigma_2$}

\mor{13}{31}{}
\mor{53}{31}{}
\mor{32}{31}{}
\mor{24}{32}{}
\mor{44}{32}{}
\mor{35}{24}{}
\mor{35}{44}{}
\mor{24}{13}{}[1,5]
\mor{44}{53}{}[1,5]
\enddc
$$

\begin{proof}
If $\sigma_1\sscm\tau$ and $\sigma_2\sscm\tau$ are disjoint
 then the proposition follows from Lemma \ref{disjoint with
  bounds} with $\sigma^+ = \tau$ and $\sigma^-$ the track given by
Lemma \ref{disjoint with bounds}. If not we describe an algorithm that replaces $\sigma_1$ and $\sigma_2$ with subtracks $\tau_1$ and $\tau_2$ and $\tau$ with a train track $\tau'$ such that $\tau_i\sscm\tau'$ and $\tau'\sscm\tau$. 
Furthermore one of the following will hold:
\begin{enumerate}[(i)]

\item\label{dimension lower} $\dim V(\tau') <\dim V(\tau)$;

\item\label{moves lower} the number of splitting moves in $\tau_i\sscm\tau'$ is less than in $\sigma_i\sscm\tau$.
\end{enumerate}
In addition, neither the dimension nor the number of moves ever increases.
If $\tau_1\sscm\tau'$ and $\tau_2\sscm\tau'$ are disjoint then, as above, the proposition follows from Lemma \ref{disjoint with bounds}.
If \eqref{dimension lower} or \eqref{moves lower} hold we apply the algorithm to the three new tracks. As both \eqref{dimension lower} and \eqref{moves lower} can only happen a finite number of times so we must eventually have that the two sequences are disjoint.

We now describe the algorithm. Let $\tau'\subset\tau$ be the smallest birecurrent subtrack such
  that $V(\sigma_1) \cap V(\sigma_2) \subset V(\tau')$.

\begin{enumerate}[(1)]
\item If $\tau'$ is a proper subtrack of $\tau$ we let
  $\tau_i\subset\sigma_i$ be the subtracks given by Lemma
  \ref{subtrack}. In this case \eqref{dimension lower} holds.

\item If $\tau'=\tau$ then $V(\sigma_i)$ intersects the relative interior of $V(\tau)$ so by Lemma \ref{surjective} we can assume that $\sigma_i\scm\tau$. By assumption $\sigma_1\scm\tau$ and $\sigma_2\scm\tau$ are not disjoint and we replace $\sigma_1$, $\sigma_2$ and $\tau$ with the tracks $\tau_1,\tau_2$ and $\tau'$ given by Lemma \ref{first step}. In this case \eqref{moves lower} holds.
\end{enumerate}
\end{proof}

\begin{prop}\label{ursula+++}
  Assume that $\sigma\sscm\tau$ and that $\theta\scm\tau$ is a central
  split such that $V(\sigma)\cap V(\theta)$ is the intersection of
  $V(\sigma)$ with a hyperplane that intersects the relative interior
  of $V(\sigma)$. Then there is a central split $\theta'\scm\sigma$
  such that $V(\theta')=V(\sigma)\cap V(\theta)$.
\end{prop}

\begin{proof}
  Say $\theta\scm\tau$ is the central split on the large branch
  $b$. If $b$ is not in the stationary set for $\sigma\sscm\tau$ then by Lemma \ref{first move} we can assume that the first move in $\sigma\sscm\tau$ is on $b$. But then the hyperplane assumption cannot hold. Thus $b$ is in the stationary set and is a large
  branch in $\sigma$. We define $\theta$ to be the central split in
  $b$. The conclusion now follows from Lemma \ref{disjoint moves}.
\end{proof}

\begin{prop}\label{distance}
Let $\mathcal C_j$ be an excellent sequence of cell structures
obtained by splitting a train track $\tau$. To every cell
$E\in\mathcal C_j$ one can assign
a birecurrent train track $\theta_E$
satisfying the following:
\begin{enumerate}[(1)]
\item $E=V(\theta_E)$.
\item If $E$ is a top dimensional cell, then $\theta_E$ is the track
  associated to $E$ in the definition of the splitting sequence.
\item If $F\subset E$ are cells then $\theta_F\sscm
  \theta_E$.
\item There is a constant $C = C(\Sigma)$ such that for each cell $F \in \mathcal C_k$ there is a top dimensional cell $E \in \mathcal C_k$ with $F \subset E$ and $d(B(\theta_E), B(\theta_F)) \leq C$.
\end{enumerate}
In particular, if all top dimensional cells in $\mathcal C_j$ have
vertex cycles distance at most $B$ from $B(\tau)$ then $d(B(\tau),
B(\theta_E)) \leq B+C$ while if $E\subset F$ for a cell $F \in
\mathcal C_j$ with $d(B(\tau), B(\theta_F)) \ge A$ then $d(B(\tau),
B(\theta_E)) \ge A-C$.
\end{prop}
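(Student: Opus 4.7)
I proceed by induction on $j$, with (4) being the quantitative payoff of (1)--(3). For the base case, $\mathcal{C}_0$ consists of $V(\tau)$ and its faces, and since faces of $V(\tau)$ are in bijection with birecurrent subtracks of $\tau$, I assign $\theta_E$ to be this subtrack. Properties (1)--(3) are tautological, and (4) holds because $\theta_F$ is reached from $\tau$ by at most $\dim V(\tau)-\dim V(\theta_F)$ subtrack moves---a number bounded in terms of $\Sigma$ by Lemma \ref{dimension count}---so iterating Lemma \ref{3.8} yields a uniform constant $C=C(\Sigma)$.

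For the inductive step I separately treat (S1)--(S3). In (S1) the hyperplane $V(\tau)$ cuts the top-dimensional cell $V(\theta_\Omega)$ into $V(\theta_1)$ and $V(\theta_2)$; I set $\theta_{V(\theta_i)}=\theta_i$. For each cell $F=V(\theta_F)\subset V(\theta_\Omega)$ of $\mathcal{C}_j$ cut by $V(\tau)$, I apply Lemma \ref{subtrack} with $\sigma$ running over $\theta_1,\theta_2,\tau$ and $\tau'=\theta_F$ to produce birecurrent subtracks of $\theta_1,\theta_2,\tau$ whose $V$'s realize the three new pieces $F\cap V(\theta_1)$, $F\cap V(\theta_2)$, $F\cap V(\tau)$; these become the tracks of the new cells. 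In (S2) and (S3) the underlying cell complex is unchanged but the top-dimensional track is replaced; I update the track of each subcell of $V(\theta_\Omega)$ by the same use of Lemma \ref{subtrack}, applied to the new top-dimensional track, so that it becomes a birecurrent subtrack of that track.

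Properties (1) and (2) are built into the construction. Property (3) follows by composing $\sscm$-sequences: Lemma \ref{subtrack} produces the needed relation from each new track to its immediate parent, and the preexisting inductive relations compose on top. For (4), every cell $F$ of $\mathcal{C}_{j+1}$ is a face of some top-dimensional cell $E$ (by the good-cell-structure axiom (C5) together with iterated Lemma \ref{going up}); by construction $\theta_F$ can be taken to be a birecurrent subtrack of $\theta_E$, so $\theta_F\sscm\theta_E$ is realized by at most $\dim V(\theta_E)-\dim V(\theta_F)\le\dim V(\tau)$ subtrack moves, and Lemma \ref{3.8} delivers the bound $d(B(\theta_E),B(\theta_F))\le C=C(\Sigma)$. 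The two ``in particular'' statements then follow immediately from (4) and the triangle inequality in $\mathcal{C}(\Sigma)$.

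The hard part is the bookkeeping in case (S1): a cell $F$ may be a face of several top-dimensional cells of $\mathcal{C}_j$ and, after subdivision, of the new cells $V(\theta_1), V(\theta_2)$ as well, and property (3) demands that a single updated $\theta_F$ witness all these inclusions simultaneously. The freedom in Lemma \ref{subtrack} to pick any ambient track, together with the fact that $\sscm$-sequences concatenate and that subtrack moves commute with splits and central splits via Corollary \ref{commuting moves}, is what lets me choose the $\theta_F$ consistently and verify (3) in all directions.
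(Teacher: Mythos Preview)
Your approach diverges from the paper's and has a genuine gap. The paper does not follow individual cells through the subdivision process; instead, for each fixed $\mathcal C_k$ it runs a \emph{downward} induction on dimension. For an $i$-cell $F$, property (C5) supplies higher-dimensional cells $E_1,\dots,E_\ell$ with $F=\bigcap_s E_s$, and repeated application of Proposition~\ref{ursula++} (not Lemma~\ref{subtrack}) produces $\theta_F$ with $V(\theta_F)=F$ and $\theta_F\sscm\theta_{E_s}$ for \emph{every} $s$ simultaneously. For (4) the paper does not claim that $\theta_F$ is a subtrack of any top-dimensional $\theta_E$. Rather, once $F=V(\theta_F)$ is established, the number of codimension-one faces of $F$ is bounded by the number of small branches of $\theta_F$, hence by a constant depending only on $\Sigma$; Corollary~\ref{bounded} then gives a subcollection of the $E_s$ of uniformly bounded size with the same intersection $F$, and rerunning the Proposition~\ref{ursula++} argument on this bounded subcollection yields a track $\theta_F'$ with $V(\theta_F')=F$ and the desired distance bound. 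Since $V(\theta_F)=V(\theta_F')$ the two tracks share vertex cycles, so (4) holds for $\theta_F$ as well.

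The gap in your argument is the invocation of Lemma~\ref{subtrack}: that lemma requires its $\tau'$ to be an honest \emph{subtrack} of the ambient track, but your inductive hypothesis (3) only gives $\theta_F\sscm\theta_\Omega$, which in general involves splits and central splits, not merely passing to subtracks. You never establish, and your step does not preserve, the invariant ``$\theta_F$ is a subtrack of some top-dimensional $\theta_E$''. For instance, a cell $G\subset V(\theta_\Omega)$ not cut by the hyperplane keeps its old track, which need not be a subtrack of $\theta_1$ or $\theta_2$ even if it was a subtrack of $\theta_\Omega$ (cf.\ case (2) in the proof of Lemma~\ref{subtrack}, where the output $\sigma'$ is a split of $\tau'$, not $\tau'$ itself). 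Your argument for (4) then rests on this unproved invariant, and the closing appeal to Corollary~\ref{commuting moves} does not repair it: the tool that handles intersections $V(\sigma_1)\cap V(\sigma_2)$ under the general hypothesis $\sigma_i\sscm\tau$ is Proposition~\ref{ursula++}, and the source of the uniform constant in (4) is Corollary~\ref{bounded}.
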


\begin{proof}
We define $\theta_E$ for $E\in\mathcal C_j$ by induction on $j$. When
$j=0$ each cell $E$ is naturally associated to a subtrack of $\tau$
and we define $\theta_E$ to be this subtrack. Now suppose that
$\theta_E$ has been defined for all cells in $\mathcal C_j$ of
dimension $>i$ for a certain $i<n$. Let $F \in \mathcal C_j$ with
$\dim F = i$. By property (C5) of an excellent sequence if $E_1,
\dots, E_\ell$ are all $i$-dimensional cells in $\mathcal C_j$ with $F
\subset E_s$ then $F = \cap E_s$. Let $F_k = E_1 \cap \dots \cap
E_k$. Via induction we have tracks $\theta_{F_k}$ with
$V(\theta_{F_k}) = F_k$ and if $E \in \mathcal C_j$ with $E_s\subset
E$ for some $s =1, \dots, k$ then $\theta_{F_k} \sscm \theta_E$. The
track $\theta_{F_k}$ is defined by applying Proposition \ref{ursula++}
to $\theta_{F_{k-1}}$ and $\theta_{E_k}$. If $\theta_F$ is not recurrent we can replace it with its largest recurrent subtrack. We then set $\theta_{F} =
\theta_{F_\ell}$ and this track will satisfy properties (1)-(3).

To get the distance bound in (4) we observe that (c) of Proposition
\ref{ursula++} gives a bound that is linear in $\ell$. While
we cannot {\it a priori} control the size of $\ell$, once we know that $F =
V(\theta_F)$ for the train track $\theta_F$ we observe that the number
of co-dimension one faces of $F$ is bounded by the number of small
branches of $\theta_F$ and hence a constant only depending on
$\Sigma$. In particular there is a subcollection of the $E_1,\dots,
E_\ell$ of uniformly bounded size whose intersection gives $F$ by
Corollary \ref{bounded}. Applying the argument of the previous
paragraph to this subcollection we get a track $\theta'_F$ with
$V(\theta'_F) = F$ and the distance bound in (4).

Finally we note that while $\theta_F$ and $\theta'_F$ may not be the
same track (and $\theta'_F$ may not satisfy (3)) since $V(\theta_F) =
V(\theta'_F)$ the two tracks have the same vertex cycles and therefore
(4) holds for $\theta_F$ also.
\end{proof}

Given a lamination $\lambda \in P_\infty(\tau)$ let $\tau_i$ be a
sequence of tracks such that $\tau_0 = \tau$, $\tau_{i+1} \sscm\tau_i$
is a single move and $\lambda \in P_\infty(\tau_i)$ for all $i$. 
We
say that the sequence is a {\em full splitting sequence} if for every
$i$ and every large branch $b$ in $\tau_i$ there exists an $i_n$ such
that $b \in \A(\tau_{i_n}\sscm\tau_i;\tau_i)$.

\begin{prop}\label{splitting sequence}
Assume that $\lambda$ is fully carried by $\tau$. Then there exists a
full splitting sequence $\tau=\tau_1,\tau_2,\cdots$ such that $\lambda$ is
fully carried by every $\tau_i$. Moreover, any infinite splitting
sequence starting at $\tau$ and carrying $\lambda$ is a full splitting
sequence. Furthermore if $\lambda'$ is carried
by every $\tau_i$ then $[\lambda] = [\lambda']$.
\end{prop}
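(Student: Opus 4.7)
The plan is to address the three parts in order, exploiting that the direction in which $\tau$ splits toward $\lambda$ is uniquely determined by the positive weights of $\lambda$ on the branches of $\tau$.

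For part (1), I construct $\tau_i$ inductively starting from $\tau_1=\tau$: at each step select a large branch $b_i$ of $\tau_i$ and split along it in the direction prescribed by the weights of $\lambda$, producing a track $\tau_{i+1}$ that still fully carries $\lambda$ (as recorded in the paragraph defining the \emph{fully carries} relation). To ensure the resulting sequence is full, use a round-robin rule: maintain a queue of large branches, at each step pop the front (if it is still a large branch of the current $\tau_i$) and split along it, enqueuing any newly created large branches. Every large branch of every $\tau_i$ eventually reaches the front of the queue and is activated.

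For part (2), suppose for contradiction that an infinite splitting sequence $\tau_1,\tau_2,\dots$ carrying $\lambda$ is not full; choose $i_0$ and a large branch $b$ of $\tau_{i_0}$ with $b \in \S(\tau_j\sscm\tau_{i_0};\tau_{i_0})$ for every $j \geq i_0$. Since stationarity is a local property and any split adjacent to $b$ would make $b$ active, Lemma \ref{stationary homeomorphism} yields that a combinatorial neighborhood of $b$ (and the weight of $\lambda$ on it) is frozen for the entire tail. On the other hand, Theorem \ref{splitting} asserts that $B(\tau_j)$ is a quasi-geodesic in $\mathcal C(\Sigma)$, so it escapes to infinity; but the persistent substructure around $b$ continues to support a uniformly bounded family of vertex cycles, contradicting the escape.

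For part (3), let $\lambda'$ be carried by every $\tau_i$. Each activation of a large branch in some $\tau_i$ cuts $V(\tau_i)$ along a hyperplane passing through $\lambda$'s projective class, so the nested intersection $\bigcap_i V(\tau_i)$ shrinks onto the cone of measured laminations with support contained in $[\lambda]$. In particular $[\lambda'] \subseteq [\lambda]$. Since $\lambda \in P_\infty(\tau)$ maps into $\mathcal{EL}$ (by Klarreich's theorem), $[\lambda]$ is a minimal filling geodesic lamination, so any nonempty closed sublamination of $[\lambda]$ equals $[\lambda]$; this forces $[\lambda']=[\lambda]$.

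The hardest step will be part (2): one must rigorously ensure that a frozen germ around a never-split large branch forces a bounded family of vertex cycles to persist (no single vertex cycle is guaranteed to survive every split, only the germ of $b$ together with its adjacent small half-branches). A cleaner alternative is to track the weight of $\lambda$ on the \emph{active} part of the tracks and use that an infinite splitting sequence must eventually exhaust the active complexity, contradicting that the frozen neighborhood of $b$ retains a fixed positive weight.
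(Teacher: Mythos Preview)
Your treatment of part (1) is fine; the paper simply cites \cite[Lemma 2.1]{agol} for this and for part (2).

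Part (2) has a real gap. Theorem \ref{splitting} only says that $B(\tau_j)$ is a \emph{reparametrized} quasi-geodesic, which need not escape to infinity: an infinite splitting sequence can in principle have bounded image in $\mathcal C(\Sigma)$, so you cannot derive a contradiction this way. You recognize the problem yourself in the final paragraph, and the proposed alternative (``exhausting the active complexity'') is not developed into an argument. The actual proof in \cite{agol} is combinatorial and tracks how the transverse measure of $\lambda$ redistributes under splitting; it does not go through the curve graph.

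Part (3) also has a gap: the assertion that $\bigcap_i V(\tau_i)$ equals the cone of measures supported on $[\lambda]$ is precisely the content of the statement, and you give no argument for it. (Incidentally, the splitting hyperplanes do not ``pass through $\lambda$'' unless the split is central; for a left or right split $\lambda$ lies strictly in the interior of the new cell.) The paper explicitly notes that this fact is ``probably well known but as we could not find a proof we provide one here,'' and then supplies a nontrivial argument: assuming $[\lambda']\neq[\lambda]$, one uses \cite[Corollary 1.7.13]{ph} and \cite[Theorem 2.3.1]{ph} to manufacture a track $\sigma$ with $\sigma\sfcm\tau$ that carries $\lambda$ but not $\lambda'$, and then iterates Proposition \ref{ursula++} (the intersection result for splitting cells) to show that $\tau_i\sfcm\sigma$ for all large $i$. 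The last step uses fullness of the sequence to force the stabilized intermediate track $\sigma^+_N$ to coincide with $\sigma$. This is genuinely different from, and more involved than, the one-line reduction you propose.
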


\begin{proof}
The first statement follows from \cite[Lemma 2.1]{agol}. In fact, the
proof of \cite[Lemma 2.1]{agol} proves the stronger second
statement. The third statement is probably well known but as we could
not find a proof we provide one here. Assume that $\lambda'$ is
carried by all $\tau_i$ but $[\lambda]\neq [\lambda']$. By
\cite[Corollary 1.7.13]{ph} we can to find a birecurrent train track
$\tau'$ that carries $\lambda$, does not carry $\lambda'$ and is
carried by $\tau$. Hence it will fully carry $\lambda$, but it may not
come from a sequence of splits and central splits of $\tau$. Instead
we use \cite[Theorem 2.3.1]{ph} to find a track $\sigma$ with
$\sigma\sscm\tau'$, $\sigma\sscm\tau$ and $\lambda$ carried by
$\sigma$. As all three tracks fully carry $\lambda$ we in fact have
$\sigma\sfcm\tau'$ and $\sigma\sfcm\tau$.
 
 We will show that for sufficiently large $i$ we have $\tau_i
 \rightarrow \sigma$. As
 $\tau_i$ carries $\lambda'$ but $\sigma$ does not this will be a
 contradiction. We repeatedly apply Proposition \ref{ursula++}. Let
 $\sigma_1^+ = \tau$ and assume that we have constructed tracks
 $\sigma^+_1, \dots, \sigma^+_{j-1}$ with
 $\sigma^+_i\sfcm\sigma^+_{i-1}$, $\sigma\sfcm\sigma_i$,
 $\tau_i\sfcm\sigma^+_i$ and $\sigma\sfcm\sigma^+_i$ and $\tau_i\sfcm\sigma^+_i$ are disjoint.
As $\tau_{j}\sscm\tau_{j-1}$ we have $\tau_{j}\sfcm\sigma^+_{j-1}$ and we can apply Proposition \ref{ursula++} to $\tau_j\sscm\sigma^+_{j-1}$ and $\sigma\sfcm\sigma^+_{j-1}$ and let
 $\sigma^+_{j}=\sigma^+$ where $\sigma^+$ is as given in the proposition. Note that since $\lambda$ is
 fully carried by all of the tracks all the carrying maps given by
 Proposition \ref{ursula++} are fully carrying. This also implies that $\lambda$ is
 in the relative interior of the associated cells so we also never
 need to pass to subtracks. In particular $\sigma^+_j\sfcm\sigma^+_{j-1}$, $\sigma\sfcm\sigma_j$ and $\tau_i\sfcm\sigma^+_j$ so the induction step is complete.

When we apply Proposition \ref{ursula++}, if $\tau_j\sfcm\sigma^+_{j-1}$ and $\sigma\sfcm\sigma^+_{j-1}$ are disjoint then $\sigma^+_j = \sigma^+_{j-1}$ and $\sigma\sfcm\sigma^+_{j-1}$ and $\sigma\sfcm\sigma^+_j$ have the same number of moves. If not, then as $\tau_j\sfcm\sigma^+_{j-1}$ factors as $\tau_j\sfcm\tau_{j-1}\sfcm\sigma^+_{j-1}$ with $\tau_j\sfcm\tau_{j-1}$ a single move and $\tau_{j-1}\sfcm\sigma^+_{j-1}$ and $\sigma\sfcm\sigma^+_{j-1}$ disjoint, we have that $\sigma^+_j\sfcm\sigma^+_{j-1}$ is a single move and $\sigma\sfcm\sigma^+_j$ has one less move than $\sigma\sfcm\sigma^+_{j-1}$. This implies that the composition of sequences $\sigma\sfcm\sigma^+_j\sfcm\tau$ has the same number of moves as the original sequence $\sigma\sfcm\tau$. In particular the number of times that $\sigma^+_j\neq\sigma^+_{j-1}$ is bounded by the number of moves in $\sigma\sfcm\tau$ and there must exist an $N$ such that if $j>N$
 then $\sigma^+_j = \sigma^+_N$. The sequence is $\sigma^+_N$, $\tau_N$, $\tau_{N+1},\dots$ is a full splitting sequence so for $i$ sufficiently large $\A(\tau_i\sfcm\sigma_N;\sigma_N)$ is all of $\sigma_N$. The active branches for $\sigma\sfcm\sigma_N$ must be disjoint from $\A(\tau_i\sfcm\sigma_N;\sigma_N)$ so we must have $\sigma = \sigma_N$ and $\tau_N\sfcm\sigma$ as desired.
\end{proof}

\subsection{A shortening argument}

In this section we assume that $\sigma,\tau,\rho$ are {\it partial
  train tracks}, i.e. each is a subgraph of a train track. We allow
valence two vertices with the turn illegal, or even valence 1
vertices. Even though the main result is used only when $\tau,\rho$
are train tracks, the extra flexibility of passing to subgraphs will
make the proof easier. More precisely, we assume:

\begin{itemize}
\item
$\tau,\rho$ are two partial train tracks on $\Sigma$,
\item $\sigma$ is the graph that consists of edges that $\tau$ and
  $\rho$ have in common,
\item branches of $\tau-\sigma$ and $\rho-\sigma$ intersect
  transversally and any vertex in common to $\tau$ and $\rho$ is
  also a vertex of $\sigma$,
\item any lamination
carried by both $\tau,\rho$ is carried by $\sigma$, i.e.

\begin{equation}
\tag{*}
V(\tau)\cap
V(\rho)=V(\sigma)
\end{equation}
\end{itemize}

Given a triple $\bs = (\sigma; \tau, \rho)$ as above define the
complexity $\chi(\bs)$ to be the pair $(e(\sigma)+I(\tau,\rho),e(\sigma))$,
ordered lexicographically, where $e(\sigma)$ is the number of edges of
$\sigma$ and $I(\tau,\rho)$ is the number of transverse intersections
between the branches of $\tau$ and $\rho$. Note that for a given
complexity there are only finitely many $\bs$ up to the action of the
mapping class group.

The number of branches of $\sigma$ is uniformly bounded depending only
on the surface $\Sigma$, so the bound on $\chi(\bs)$ really only
amounts to the bound on the intersection between the branches of
$\tau$ and $\rho$.

As an example of the extra flexibility, note that if we remove an edge
of $\sigma$ from all three graphs $\sigma,\tau,\rho$ the listed
conditions continue to hold, but the new triple has smaller
complexity. In the proof below, the intersection number $I(\tau,\rho)$
will increase only if $e(\sigma)$ decreases by at least as much.

Denote by $Supp(\sigma)$ the support of $\sigma$,
i.e. the smallest subsurface that contains $\sigma$ (possibly
$\emptyset$, or disconnected, or all of $\Sigma$). Thus
$Supp(\sigma)=\emptyset$ if and only if $\sigma$ is contained in a
disk. 

\begin{lemma}\label{2 faces new}
For every $C$ and every $\chi$ there is $C'=C'(\Sigma,C,\chi)$
such that if $\chi(\bs)\leq\chi$, $a\in S(\tau)$, $b\in S(\rho)$,
$d(a,b)\leq C$, then
\begin{enumerate}[(i)] 
\item If $Supp(\sigma)=\emptyset$ then 
$$d(a,B(\tau))\leq C', d(b,B(\rho))\leq C'$$
\item If $Supp(\sigma)\neq\emptyset,\Sigma$ then 
$$d(a,\mathcal C(Supp(\sigma)))\leq C', d(b,\mathcal C(Supp(\sigma)))\leq C'$$
\item
If $Supp(\sigma)=\Sigma$ then
$$d(a,S(\sigma))\leq C', d(b,S(\sigma))\leq C'$$
\end{enumerate}
\end{lemma}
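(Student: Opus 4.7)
The plan is a double induction on $\chi(\bs)=(e(\sigma),I(\tau,\rho))$ in lexicographic order. The finiteness of mapping class group orbits of triples at each fixed complexity, noted in the text, ensures a uniform $C'=C'(\Sigma,C,\chi)$ once the inductive reduction is in place. All three cases (i), (ii), (iii) are handled by a single reduction step, with the ``landing set'' controlled by $Supp(\sigma)$ at each stage.

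I would first dispose of degenerate cases. If $a=b$, then by $(*)$ we have $a\in V(\tau)\cap V(\rho)=V(\sigma)$, so $a\in S(\sigma)$ and the conclusion in each case is immediate. If $\tau=\sigma$, then $a\in S(\sigma)$ already, and case (iii) follows from $d(a,b)\le C$ together with the quasi-convexity of $S(\sigma)$ (Lemma~\ref{3.3}); cases (i), (ii) in this degenerate setting reduce to statements involving only $\rho$, which follow from iterating a splitting sequence of $\rho$ toward $b$ and Lemma~\ref{3.8}.

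Now assume $\tau\ne\sigma$ and $\rho\ne\sigma$. I would split $\tau$ along a large branch outside $\sigma$ toward $a$, obtaining $\tau_1\subsetneq\tau$ with $a\in S(\tau_1)$. Let $\sigma_1$ be the common subgraph of $\tau_1$ and $\rho$, trimmed to a partial subgraph if necessary to restore $(*)$ for $\bs_1=(\sigma_1;\tau_1,\rho)$. The central claim is that $\chi(\bs_1)<\chi(\bs)$: the diagonal branch of the split either removes an intersection with $\rho$ (dropping $I(\tau,\rho)$ while keeping $e(\sigma)$ fixed), eliminates an edge of $\sigma$ (dropping $e(\sigma)$), or obliges a trimming that accomplishes one of these two things. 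Applying the inductive hypothesis to $\bs_1$ yields a bound $d(a,\cdot)\le C'(\Sigma,C,\chi(\bs_1))$ for the relevant landing set; since $B(\tau_1)$ is uniformly close to $B(\tau)$ (Lemma~\ref{3.8}), and $Supp(\sigma_1)\subset Supp(\sigma)$ and $S(\sigma_1)\subset S(\sigma)$, the bound transfers back to $\bs$. A symmetric split of $\rho$ toward $b$ yields the corresponding bound for $b$.

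The main obstacle is verifying the strict decrease of $\chi$ while preserving $(*)$. A generic split of $\tau$ can create new transverse intersections with $\rho$ --- the new diagonal may cut across a crowded region of $\rho$-branches --- so $I(\tau_1,\rho)$ can \emph{rise} above $I(\tau,\rho)$ unless one is careful. The flexibility of working with \emph{partial} train tracks, rather than full tracks, permits one to pass to a subgraph $\tau'\subset\tau_1$ that still carries $a$ and avoids the new intersections, at the cost perhaps of trimming an edge of $\sigma$ (which then strictly reduces $e(\sigma)$, the dominant term). The proof therefore rests on a case analysis of where the diagonal sits relative to $\sigma$ and to $\rho$, and on choosing the left/right split direction accordingly; this is the technically delicate part that specializes the Sela-style shortening strategy to the train-track setting, and it is where the hypothesis $V(\tau)\cap V(\rho)=V(\sigma)$ is used most essentially to constrain how much new intersection structure can be created.
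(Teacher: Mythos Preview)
Your proposal has a genuine gap at its central step, and the strategy differs in an essential way from what actually works.

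You propose to split $\tau$ along a large branch \emph{outside} $\sigma$ toward $a$ and claim that the resulting triple has strictly smaller complexity $\chi$. You acknowledge yourself that the new diagonal may create fresh transverse intersections with $\rho$, so that $I(\tau_1,\rho)$ can go up; since the split is outside $\sigma$, $e(\sigma)$ is unchanged. Your remedy---pass to a subgraph of $\tau_1$ still carrying $a$ that avoids the new intersections, ``at the cost perhaps of trimming an edge of $\sigma$''---is unsupported: there is no mechanism forcing such a subgraph to exist, and trimming an edge of $\sigma$ while still carrying $a$ is not something you can do at will. You label this ``the technically delicate part'' but do not carry it out, and I do not believe it can be carried out along these lines. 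There is also the preliminary issue that $\tau$ need not have any large branch outside $\sigma$.

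The paper's argument is organized quite differently. It does \emph{not} try to decrease $\chi$ at every step. Instead it argues by contradiction: assuming failure, one gets sequences $a_n,b_n$ going to infinity, passes to Hausdorff limits $A,B$, and uses Lemma~\ref{measurable parts} together with $(*)$ to conclude that their common measurable part $\Lambda$ is carried by $\sigma$. One then splits along a large branch of the subtrack $\bar\sigma\subset\sigma$ \emph{traversed by $\Lambda$}---so the split is inside $\sigma$, applied simultaneously to $\sigma,\tau,\rho$, and guided by the limiting lamination. In the generic case this keeps $\chi$ \emph{equal}, not smaller; the quantity that strictly drops is the \emph{combinatorial length} $\ell_\tau(a_n)+\ell_\rho(b_n)$ (taken minimal subject to the failure conditions). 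Finiteness of $Mod(\Sigma)$-orbits of triples at fixed $\chi$ then forces a repeat $\bs_k=\phi(\bs_l)$, contradicting length minimality. The degenerate cases (central split, or $\bar\sigma$ a union of legal loops handled by Dehn twists) are where $\chi$ may actually drop and induction on $\chi$ is invoked. In short: the inductive ``clock'' is combinatorial length, not $\chi$, and the split is inside $\sigma$ directed by the limit lamination, not outside $\sigma$ directed by $a$.
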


In (ii) by $\mathcal C(Supp(\sigma))$ we mean the set of curves
carried by $Supp(\sigma)$, even when $Supp(\sigma)$ is disconnected.

Most of the time when we apply Lemma \ref{2 faces new}, we will have
that $\tau,\rho$ are subtracks of some large track $\omega$ and
$\sigma=\tau\cap\rho$, and then the condition (*) is standard and
quickly follows from the fact that legal paths in the universal cover
are quasi-geodesics and that they are uniquely determined by their
endpoints on the circle at infinity.
The proof of Lemma \ref{2 faces new} is by
modifying the tracks and then $\tau,\rho$ may develop intersecting
branches.

If $\sigma$ is a train track or a partial train track, and $a$ is
carried by $\sigma$ then the {\it combinatorial length}
$\ell_\sigma(a)$ is the sum of the weights of $a$.

\begin{proof}[Proof of Lemma \ref{2 faces new}]
We will suppose such $C'$ does not exist and obtain a
contradiction.

{\bf Proof of (i).} If the lemma fails for a
particular $\bs$, there are sequences of curves $a_n\in S(\tau)$,
$b_n\in S(\rho)$ such that $d(a_n,b_n)\leq C$, $d(a_n,B(\tau))>n$,
$d(b_n,B(\rho))>n$. After passing to a subsequence $n_j$, we may
assume that $a_{n_j}\to A$, $b_{n_j}\to B$ in the Hausdorff topology,
where $A,B$ are geodesic laminations. By Lemma \ref{measurable parts}
$A$ and $B$ have the same (nonempty) measurable part $\Lambda$, which
must be carried by $\sigma$ by assumption (*). This contradicts the
assumption that $\sigma$ is contained in a disk.

{\bf Proof of (ii).} We induct on the complexity.

For each $\bs$ with $\chi(\bs)\leq \chi$ where the lemma fails, for
every $n$ there are curves $a^{\bs}_n\in S(\tau),b^{\bs}_n\in S(\rho)$
with $d(a^{\bs}_n,b^{\bs}_n)\leq C$, $d(a^{\bs}_n,\mathcal
C(Supp(\sigma)))>n$, $d(b^{\bs}_n,\mathcal C(Supp(\sigma)))>n$. We
will assume that subject to these conditions
$$\ell_{\tau}(a^{\bs}_n)+\ell_{\rho}(b^{\bs}_n)$$
is minimal possible.

To obtain a contradiction we will find a sequence of triples 
$\bs_i=(\sigma_i;\tau_i,\rho_i)$
where the lemma fails with $\bs = \bs_1$ and for each $\bs_i$ an infinite
sequence $\{n^i_j\}$ such that
\begin{enumerate}[(1)]
\item $n_j^1=j$ and $\{n^{i}_j\}$ is a subsequence of $\{n^{i-1}_j\}$
  for $i>1$;

\item $a^{\bs_i}_{n^i_j}\in S(\tau_i)$, $b^{\bs_i}_{n^i_j}\in
  S(\rho_i)$;

\item $d(a^{\bs_i}_{n^i_j},b^{\bs_i}_{n^i_j})\leq C$;

\item $d(a^{\bs_i}_{n^i_j},\mathcal C(Supp(\sigma_i)))>n^i_j$,
  $d(b^{\bs_i}_{n^i_j},\mathcal C(Supp(\sigma_i)))>n^i_j$; 

\item
  $\ell_{\tau_i}(a^{\bs_i}_{n^i_j})+\ell_{\rho_i}(b^{\bs_i}_{n^i_j}) <
  \ell_{\tau_i}(a^{\bs_{i-1}}_{n^i_j})+\ell_{\rho_i}(b^{\bs_{i-1}}_{n^i_j})$;

\item for every $i,j$,
  $\ell_{\tau_i}(a^{\bs_i}_{n^i_j})+\ell_{\rho_i}(b^{\bs_i}_{n^i_j})$
  is minimal possible subject to (2)-(4);

\item $\chi(\bs_{i}) \le \chi(\bs_{i-1})$;

\item $\bs_i$ satisfies (*), i.e. $V(\tau_i)\cap V(\rho_i)=V(\sigma_i)$.
\end{enumerate}
By (7) our sequence $\bs_i$ must eventually repeat (up to
$Mod(\Sigma)$) so there are $k< l$
with $\bs_k = \phi(\bs_l)$ for some mapping class $\phi$. By repeated
applications of (5), we have
$$\ell_{\tau_l}(a^{\bs_l}_{n^l_j})+\ell_{\rho_l}(b^{\bs_l}_{n^l_j})
  <
  \ell_{\tau_k}(a^{\bs_{k}}_{n^k_j})+\ell_{\rho_k}(b^{\bs_{k}}_{n^k_j})$$
  obtaining our contradiction to (6), since for large $j$ the curves
  $\phi(a^{\bs_l}_{n^l_j})$,$\phi(b^{\bs_l}_{n^l_j})$
  satisfy (2)-(4) (for $i=k$) and have smaller total combinatorial length than
  $a^{\bs_k}_{n^k_j}$ and $b^{\bs_k}_{n^k_j}$.

We will construct the sequence $\bs_i$ inductively.  Assume that
$\bs_i$ and the sequence $\{n^{i}_j\}$ have been defined satisfying
the above conditions. We then define a subsequence
$\{n^{i+1}_j\}$ of $\{n^{i}_j\}$ and show that there exists a
$\bs_{i+1}$ so that (1)-(8) hold with suitable choices of curves. We
first choose the subsequence $\{n^{i+1}_j\}$ such that $a^{\bs_i}_{n^{i+1}_j} \to A$
and $b^{\bs_i}_{n^{i+1}_j} \to B$ where $A,B$ are two geodesic laminations
and convergence is with respect to Hausdorff topology. The
construction of $\bs_{i+1}$ is more involved.

Lemma \ref{measurable parts} implies that $A$ and $B$ 
have the same measurable part $\Lambda$ and differ only in isolated
non-closed leaves. By assumption (8), $\Lambda$ is carried by
$\sigma_i$. Let $\bar\sigma \subset \sigma_i$ be the union of the branches
crossed by $\Lambda$. Thus $\bar\sigma$ is a train-track.

{\it Case 1.} $\bar\sigma$ has at least one illegal turn. Note that
$\Lambda$ supports a transverse measure of full support and in
particular $\bar\sigma$ has a large branch (one with maximal transverse
measure). Split along this branch so that $\Lambda$ is still
carried to obtain a new track $\sigma_{i+1}$. 

\begin{figure}[h]
  \begin{center}
    \begin{tikzpicture}[scale=0.6,y=-1cm]
\sf
\draw[black] (5.99615,10.16538) +(2:1.20434) arc (2:-75:1.20434);
\draw[black] (5.99615,7.16538) +(2:1.20434) arc (2:-75:1.20434);
\draw[black] (12.49616,7.16538) +(2:1.20434) arc (2:-75:1.20434);
\draw[black] (12.49616,10.16538) +(2:1.20434) arc (2:-75:1.20434);
\draw[black] (12.49616,10.63462) +(-2:1.20434) arc (-2:75:1.20434);
\draw[black] (8.40385,7.76538) +(178:1.20434) arc (178:255:1.20434);
\draw[black] (17.20384,7.36538) +(178:1.20434) arc (178:255:1.20434);
\draw[black] (5.99615,5.56538) +(2:1.20434) arc (2:-75:1.20434);
\draw[black] (8.40385,5.56538) +(178:1.20434) arc (178:255:1.20434);
\draw[black] (8.40385,11.93462) +(-178:1.20434) arc (-178:-255:1.20434);
\draw[black] (12.49616,11.93462) +(-2:1.20434) arc (-2:75:1.20434);
\draw[black] (17.20384,11.93462) +(-178:1.20434) arc (-178:-255:1.20434);
\draw[black] (12.49616,5.56538) +(2:1.20434) arc (2:-75:1.20434);
\draw[black] (17.20384,5.56538) +(178:1.20434) arc (178:255:1.20434);
\draw[black] (5.99615,11.93462) +(-2:1.20434) arc (-2:75:1.20434);
\draw[black] (5.99615,10.53462) +(-2:1.20434) arc (-2:75:1.20434);
\draw[black] (8.40385,11.56538) +(178:1.20434) arc (178:255:1.20434);
\draw[black] (17.20384,11.56538) +(178:1.20434) arc (178:255:1.20434);
\draw[black] (7.2,5.6) -- (7.2,11.9);
\draw[black] (13.7,5.6) -- (13.7,11.9);
\draw[black] (16,5.6) -- (16,11.9);
\draw[black] (13.7,5.5) -- (16,12.3);

\end{tikzpicture}%

\caption{}
\label{bigsplit}
\end{center}
\end{figure}

{\it Case 1a.}
The non-degenerate case that such a split is unique (i.e. the pairs of
weights at the two ends are distinct) is pictured in Figure
\ref{bigsplit}. The vertical segment represents a large branch of
$\overline\sigma$ and the two branches at the top and at the bottom
are also in $\overline\sigma$. The branches pictured on the sides are
branches of $\sigma_i-\overline\sigma$, $\tau_i-\sigma_i$, or
$\rho_i-\sigma_i$. The splitting operation consists of cutting along the
large branch thus producing two vertical branches of the split
$\overline\sigma$, adding the suitable diagonal branch so that
$\Lambda$ is carried, and attaching the side branches at exactly the
same point, to either the left or the right vertical branch. We define
$\bs_{i+1}$ to be the split version of $\bs_i$. Thus $\sigma_{i+1}$
includes the two vertical branches, the two branches at the top, the
two branches at the bottom, the diagonal branch, and any side branches
that came from $\sigma_i-\overline\sigma$. The track $\tau_{i+1}$
contains $\sigma_{i+1}$ and includes side branches that came from
$\tau_i-\overline\sigma_i$, and similarly for $\rho_{i+1}$. Observe that
$\chi(\bs_{i+1})=\chi(\bs_{i})$, so (7) holds.

{\it Claim.} For large $j$, $a_{n_j^{i+1}}\in S(\tau_{i+1})$ and
$b_{n_j^{i+1}}\in S(\rho_{i+1})$. 

Indeed, there are leaves of $\Lambda$ that cross from the upper left
[right] to the lower left [right] branch on the left diagram in Figure
\ref{bigsplit}, and likewise from upper left to lower right. The same
is therefore true for segments of $a_{n_j^{i+1}}$ for large $j$. This
prevents $a_{n_j^{i+1}}$ from entering the vertical segment say from a
side branch on the left and exiting through a side branch on the
right, or the top or bottom right branch. Since such configurations do
not occur, the Claim holds.

Thus after discarding an initial portion of each sequence, properties
(2)-(5) hold (for (4) note that $Supp(\sigma_{i+1})=Supp(\sigma_i)$
and for (5) note that since $a_{n_j^{i+1}}$ contains segments
that cross from upper left to lower left, from upper right to lower
right, and from upper left to lower right, the combinatorial length
strictly decreases after the split). Now define
$a^{\bs_{i+1}}_{n^{i+1}_j}$ and $b^{\bs_{i+1}}_{n^{i+1}_j}$ to be a
pair of curves that minimize the sum of the combinatorial lengths,
subject to (2)-(5). 

It remains to prove (8). Let $\Omega$ be a lamination carried by
$\tau_{i+1}$ and by $\rho_{i+1}$. It is therefore carried by $\tau_i$
and $\rho_i$, so by (8) for $\bs_i$ it is carried by $\sigma_i$. Now
we again have to argue that certain configurations do not occur,
e.g. that leaves of $\Omega$ do not enter on a left side branch and
exit on a right side branch. If this occurs then $\Omega$ would not be
carried by $\tau_{i+1}$ or $\rho_{i+1}$.

{\it Case 1b.}
In the degenerate case when both splits carry $\Lambda$ (i.e. when
$\Lambda$ does not cross the diagonally drawn branches in Figure
\ref{bigsplit2}), we define $\sigma_{i+1}$ to be the track obtained
from $\sigma_i$ by cutting open along the vertical segment. Thus
$\sigma_{i+1}$ does not include either of the diagonal branches. See
Figure \ref{bigsplit2}.

\begin{figure}[h]
  \begin{center}
    \begin{tikzpicture}[scale=0.6,y=-1cm]
\sf
\draw[black] (5.99615,10.16538) +(2:1.20434) arc (2:-75:1.20434);
\draw[black] (5.99615,7.16538) +(2:1.20434) arc (2:-75:1.20434);
\draw[black] (12.49616,7.16538) +(2:1.20434) arc (2:-75:1.20434);
\draw[black] (12.49616,10.16538) +(2:1.20434) arc (2:-75:1.20434);
\draw[black] (12.49616,10.63462) +(-2:1.20434) arc (-2:75:1.20434);
\draw[black] (8.40385,7.76538) +(178:1.20434) arc (178:255:1.20434);
\draw[black] (17.20384,7.36538) +(178:1.20434) arc (178:255:1.20434);
\draw[black] (5.99615,5.56538) +(2:1.20434) arc (2:-75:1.20434);
\draw[black] (8.40385,5.56538) +(178:1.20434) arc (178:255:1.20434);
\draw[black] (8.40385,11.93462) +(-178:1.20434) arc (-178:-255:1.20434);
\draw[black] (12.49616,11.93462) +(-2:1.20434) arc (-2:75:1.20434);
\draw[black] (17.20384,11.93462) +(-178:1.20434) arc (-178:-255:1.20434);
\draw[black] (12.49616,5.56538) +(2:1.20434) arc (2:-75:1.20434);
\draw[black] (17.20384,5.56538) +(178:1.20434) arc (178:255:1.20434);
\draw[black] (5.99615,11.93462) +(-2:1.20434) arc (-2:75:1.20434);
\draw[black] (5.99615,10.53462) +(-2:1.20434) arc (-2:75:1.20434);
\draw[black] (8.40385,11.56538) +(178:1.20434) arc (178:255:1.20434);
\draw[black] (17.20384,11.56538) +(178:1.20434) arc (178:255:1.20434);
\draw[black] (7.2,5.6) -- (7.2,11.9);
\draw[black] (13.7,5.6) -- (13.7,11.9);
\draw[black] (16,5.6) -- (16,11.9);
\draw[black] (13.7,5.5) -- (16,12.3);
\draw[black] (16.00667,5.33778) -- (13.70667,12.13778);

\end{tikzpicture}%

\caption{}
\label{bigsplit2}
\end{center}
\end{figure}

Next, we observe that for large $j$ the curves
$a^{\bs_{i+1}}_{n^{i+1}_j}$ cannot cross both from top left to bottom
right and from top right to bottom left, and the same is true for
$b^{\bs_{i+1}}_{n^{i+1}_j}$. Thus after passing to a further
subsequence we can add one of the two diagonal branches to
$\tau_{i+1}$ and ensure that $a^{\bs_{i+1}}_{n^{i+1}_j}\in
S(\tau_{i+1})$, and likewise $b^{\bs_{i+1}}_{n^{i+1}_j}\in
S(\rho_{i+1})$ after including one of the two diagonals. It is
possible that one diagonal is added to $\tau_{i+1}$ and the other to
$\rho_{i+1}$ and then the intersection number increases by 1. But the
number of branches of $\sigma_{i+1}$ decreased, so we still have
$\chi(\bs_{i+1})<\chi(\bs_i)$ and we are done by induction. If the
same diagonal is added to both $\tau_{i+1}$ and to $\rho_{i+1}$ we
will also add it to $\sigma_{i+1}$. The rest
of the argument is similar to the non-degenerate case.

{\it Case 2.} $\bar\sigma$ does not have any illegal turns. Thus $\bar\sigma$
is a collection of legal simple closed curves and so is $\Lambda$. In
$A$ and $B$ there must be isolated leaves spiraling towards each
component of $\Lambda$, in opposite directions on the two sides. The
spiraling directions are the same for both $A$ and $B$, since
otherwise the projection distance on the curve complex of the annulus
would be large. In other words, both $a^{\bs_{i}}_{n^{i}_j}$ and
$b^{\bs_{i}}_{n^{i}_j}$ wind around the same annulus and in the same
direction a large number of times. Applying the Dehn twist (left, or
right, as appropriate) shortens both curves as they wind around the
annulus one less time. At the same time this operation does not change
the distance to $\mathcal C(Supp(\sigma))$. This contradicts the
minimality and we are done.

{\bf Proof of (iii).} Again the proof is by induction on the
complexity. We will inductively assume (1)-(8) except that (4) is
replaced with
\begin{enumerate}
\item [(4')] $d(a^{\bs_i}_{n^i_j},S(\sigma_i))>n^i_j$,
  $d(b^{\bs_i}_{n^i_j},S(\sigma_i))>n^i_j$.
\end{enumerate}

The proof follows closely our proof of (ii). As in that proof, we pass
to a further subsequence and construct limiting laminations $A$ and
$B$ that have a common measurable part $\Lambda$ which is carried by
$\sigma_i$, and $\bar\sigma$ is the union of the edges of $\sigma$
crossed by $\Lambda$. 

There are two cases as in (ii). 

{\it Case 1.} $\bar\sigma$ contains an illegal turn. We split along a
large branch of $\bar \sigma$ as before and define $\bs_{i+1}$ in the
same way (in both subcases, whether the split is degenerate or
non-degenerate). The only change is that now we have to argue that
(4') holds, instead of (4). The reason now is that
$S(\sigma_{i+1})\subset S(\sigma_i)$.

{\it Case 2.} $\bar\sigma$ is a collection of legal loops. Now we
cannot simply apply a Dehn twist since this does not necessarily
preserve $S(\sigma_i)$. Note that there must be branches of $\sigma_i$
attached to both sides of $\bar\sigma$ for otherwise we would be in
situation (ii). 

{\it Case 2a.} All branches of $\sigma_i$ attached to a component of
$\bar\sigma$ are attached in the same direction. See Figure \ref{2a}.

\begin{figure}[h]
  \begin{center}
    \begin{tikzpicture}[scale=0.6,y=-1cm]
\sf
\draw[black] (8.2,4.6) +(-3:1.80278) arc (-3:71:1.80278);
\draw[black] (8.2,8.2) +(-3:1.80278) arc (-3:71:1.80278);
\draw[black] (11.8,6.5) +(-177:1.80278) arc (-177:-251:1.80278);
\draw[black] (11.8,9.7) +(-177:1.80278) arc (-177:-251:1.80278);
\draw[thick,black] (10,3.6) -- (10,11.2);
\draw[thick,black] (10,3.6) -- (10,11.2);

\end{tikzpicture}%

\caption{}
\label{2a}
\end{center}
\end{figure}

Thus there is a unique curve in $S(\sigma_i)$ that crosses an edge of
this component of $\bar\sigma$ (and it equals the
component). Therefore the Dehn twist preserves $S(\sigma_i)$ and we
proceed as before.

{\it Case 2b.} There are branches of $\sigma_i$ attached to a component 
of $\bar\sigma$ in opposite directions. We will assume here that every
branch of $\sigma_i$ is crossed by either $a^{\bs_i}_{n^i_j}$ or by
$b^{\bs_i}_{n^i_j}$ (or both) for every $j$, for otherwise we can
remove this edge from all three $\sigma_i,\tau_i,\rho_i$ and use
induction. 

Then we can find two branches of $\sigma_i$ attached in opposite
directions and on opposite sides of this component of $\bar\sigma$
(the curves $a^{\bs_i}_{n^i_j}$ or $b^{\bs_i}_{n^i_j}$ spiral and
cannot escape on the same side). In
other words, we have a picture as in Figure \ref{bigsplit} where the
vertical segment as well as top left and lower right branches [or top
  right and lower left branches] belong to $\bar\sigma$, and the top
right and the lower left branches [or top left and lower right
  branches] belong to $\sigma_i-\bar\sigma$. Perform the split as in
Figure \ref{bigsplit} so that $\Lambda$ is carried. If there are any
side branches attached to the vertical segment, then after the split
the number of side branches attached to $\bar\sigma$ is strictly
smaller and we may induct on this number. If there are no such side
branches, then the combinatorial lengths of $a^{\bs_i}_{n^i_j}$ and
$b^{\bs_i}_{n^i_j}$ strictly decrease after the split (e.g. consider a
piece of $a^{\bs_i}_{n^i_j}$ that enters $\bar\sigma$ through the top
branch which is not part of $\bar\sigma$). Then proceed as before, by
defining $a^{\bs_{i+1}}_{n^{i+1}_j}$ and $b^{\bs_{i+1}}_{n^{i+1}_j}$
to be curves that minimize combinatorial length subject to (2)-(4').
\end{proof}

We will only use two special cases of Lemma \ref{2
  faces new}, and we state them below.

\begin{cor}\label{subtracks}
For every $C>0$ there is $C'>0$ depending only on the surface $\Sigma$
so that the following holds.
Let $\tau$ be a large track on $\Sigma$. Assume one of the following.
\begin{enumerate}[(I)]
\item $\tau_1,\tau_2$ are large subtracks of
$\tau$. Let $\sigma=\tau_1\cap\tau_2$. After pruning dead ends,
$\sigma$ becomes a track (possibly empty) and $V(\sigma)=V(\tau_1)\cap
V(\tau_2)$. 
\item $\tau_1,\tau_2$ are the two tracks obtained
from $\tau$ by splitting along a large branch $e$, and $\sigma$ the
track obtained by a central split at $e$. Thus $P(\tau)=P(\tau_1)\cup
P(\tau_2)$ and $P(\sigma)=P(\tau_1)\cap P(\tau_2)$.
\end{enumerate}

Then one of the following holds.
\begin{enumerate}[(1)]
\item
$\sigma$ is not large (possibly it is
empty), and for any two curves $a_i\in P(\tau_i)$ with $d(a_1,a_2)\leq
C$
it follows that $d(a_i,B(\tau_i))\leq C'$, or
\item $\sigma$ is large and for any two curves $a_i\in S(\tau_i)$ 
with $d(a_1,a_2)\leq C$ 
there is a curve $c\in S(\sigma)$ such that $d(a_i,c)\leq C'$.
\end{enumerate}
\end{cor}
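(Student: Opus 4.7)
The plan is to apply Lemma \ref{2 faces new} to the triple $\bs=(\sigma;\tau_1,\tau_2)$. First I would verify the hypotheses of that lemma. In case (I), $\sigma=\tau_1\cap\tau_2$ (after pruning) is the common subgraph by construction; since $\tau_1,\tau_2$ are subtracks of the ambient track $\tau$, their unshared branches do not cross in their interiors, and any shared vertex lies in $\sigma$. In case (II), $\sigma$ is obtained from either $\tau_i$ by deleting the diagonal branch, so $\tau_1-\sigma$ and $\tau_2-\sigma$ are the two diagonals, which cross once transversally with endpoints in $\sigma$. The identity $V(\tau_1)\cap V(\tau_2)=V(\sigma)$ is given in case (I) and is the standard splitting fact recalled in Section 3.1.3 in case (II).

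Having verified the hypotheses, I would split according to the three subcases of Lemma \ref{2 faces new} determined by $Supp(\sigma)$. The case $Supp(\sigma)=\emptyset$ gives $d(a_i,B(\tau_i))\le C'$ directly, which is conclusion (1) (with $\sigma$ contained in a disk, hence not large). The case $Supp(\sigma)=\Sigma$ means $\sigma$ is large and produces curves $c_i\in S(\sigma)$ with $d(a_i,c_i)\le C'$; taking $c=c_1$ and absorbing $C$ into the constant via the triangle inequality yields conclusion (2).

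The only nontrivial step lies in the intermediate case $Supp(\sigma)\ne\emptyset,\Sigma$, where the lemma delivers only $d(a_i,\mathcal C(Supp(\sigma)))\le C'$, and this must be upgraded to the stronger bound $d(a_i,B(\tau_i))\le C''$ required by conclusion (1). The key observation is that $\sigma$ is a nonempty recurrent track: in (I) by the pruning assumption, and in (II) because only subcase (S1) of Section 3.2 produces two splits $\tau_1,\tau_2$ together with a central split track $\sigma$, all three recurrent. Since $Supp(\sigma)\ne\emptyset$, $\sigma$ is not contained in a disk, so $V(\sigma)$ is nondegenerate and admits a vertex cycle $v$. Extending the weights of $v$ by zero on branches of $\tau_i-\sigma$ preserves the $\tau_i$-switch equations and keeps each branch weight at most $2$, so $v\in B(\tau_i)$. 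On the other hand $v$ lies in $\mathcal C(Supp(\sigma))$, and any curve in $\mathcal C(Supp(\sigma))$ is disjoint from $\partial Supp(\sigma)\ne\emptyset$, so it lies within distance $2$ of $v$ in $\mathcal C(\Sigma)$ through a boundary component. Combined with the lemma, this gives $d(a_i,B(\tau_i))\le C'+2$, yielding conclusion (1). I expect this vertex-cycle comparison to be the main technical step; everything else is a direct translation from Lemma \ref{2 faces new}.
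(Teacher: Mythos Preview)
Your approach is exactly the paper's: the corollary is stated as an immediate specialization of Lemma~\ref{2 faces new}, and your proof supplies the details by verifying the hypotheses of that lemma and then translating its three cases (i)--(iii) into the dichotomy (1)/(2). The identification ``$\sigma$ large $\iff Supp(\sigma)=\Sigma$'' makes cases (i),(ii) correspond to conclusion~(1) and case (iii) to conclusion~(2), and your vertex-cycle bridge for the intermediate case (ii) is the right idea.

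Two small points to tighten. First, you should note that the complexity $\chi(\bs)=(e(\sigma),I(\tau_1,\tau_2))$ is bounded purely in terms of $\Sigma$: $e(\sigma)$ is at most the number of branches of a train track on $\Sigma$, and $I(\tau_1,\tau_2)$ equals $0$ in case~(I) and $1$ in case~(II). This is what makes the resulting $C'$ depend only on $\Sigma$ and $C$. Second, your sentence ``$\sigma$ is \dots recurrent: in (I) by the pruning assumption'' is not quite right---pruning dead ends does not force recurrence. The fix is harmless: either pass to the maximal recurrent subtrack $\sigma'\subset\sigma$ (so $V(\sigma')=V(\sigma)$ and a vertex cycle of $\sigma'$ still lies in $\mathcal C(Supp(\sigma))$ and in $B(\tau_i)$), or observe that any generic (partial) train track with $Supp\neq\emptyset$ admits arbitrarily long legal paths and hence a closed essential train path, so $S(\sigma)\neq\emptyset$. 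Either way your ``distance $\le C'+2$ to $B(\tau_i)$'' argument goes through.
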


\section{Cell structures via splittings}

Now we take $U=V(\tau)$ for a recurrent, transversely recurrent,
maximal train track $\tau$.

Let $\mathcal C_j$ be the excellent sequence obtained by repeating
the subdivision process, at every step choosing one of the
co-dimension 0 cells $V(\sigma)$, $\sigma$ a recurrent, transversely
recurrent, maximal train track, and splitting in a selected
large branch. Thus inductively, co-dimension 0 cells in $\mathcal C_j$
are in 1-1 correspondence with a set of train tracks, each
obtained from $\tau$ by a splitting sequence, and all tracks in the
splitting sequence correspond to co-dimension 0 cells in $\mathcal C_k$
for $k\leq j$.

\subsection{Interpolating curves process}

In this section we set the groundwork for proving that the distance
between disjoint cells is not too small. This follows easily from Lemma
\ref{2 faces new} when the associated train tracks have bounded
intersection number. To handle the general case we define a certain
iterative procedure that constructs sequences of curves relating
different cells in $\mathcal C_j$.

We start by defining a
sequence $C_0,C_1,\dots$ inductively. Here $C_0>0$ is a fixed
constant, and $C_{i+1}$ is defined as $C'$ in Corollary
\ref{subtracks} for the constant $C=2^nC_i$, where $n=\dim\mathcal
    {ML}$.

When $a$ is a simple closed curve we denote by $Carr_j(a)$ the {\it
  carrier} of $a$ in $\mathcal C_j$, i.e. the smallest cell of
$\mathcal C_j$ that contains $a$.

\begin{definition}
A sequence of curves ${\bf a}=a_0,a_2,\dots,a_m$ in $\Sigma$ is {\it
  good} with respect to the cell structure $\mathcal C_j$ (or
$\mathcal C_j$-{\it good}) if for any two adjacent curves
$a_i,a_{i+1}$ in the sequence the carriers $Carr_j(a_i)$ and
$Carr_j(a_{i+1})$ are nested (or possibly equal),
i.e. $Carr_j(a_i)\subseteq Carr_j(a_{i+1})$ or
$Carr_j(a_{i+1})\subseteq Carr_j(a_i)$.
\end{definition}

A sequence which is $\mathcal C_j$-good may not be
$\mathcal C_{j+1}$-good. We now describe an inductive procedure that
consists of inserting curves to produce $\mathcal C_k$-good sequences
with $k$ large.

We start with a $\mathcal C_0$-good sequence of bounded length. For
example, we might start with a sequence $a_0,a_1$ of length 2
consisting of two curves in the interior of the same cell in $\mathcal
C_0$. Inductively assume that we inserted some curves in the sequence
and obtained a $\mathcal C_{j}$-good sequence ${\bf a}$.

Suppose $a,b$ are two consecutive curves in ${\bf a}$ that fail to
satisfy the definition of $\mathcal C_{j+1}$-good, that is, the 
carriers $A=Carr_{j+1}(a)$ and $B=Carr_{j+1}(b)$ are not
nested. There are several cases.

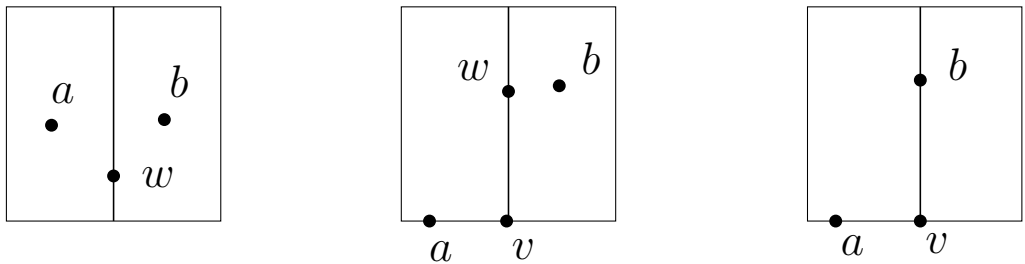
\begin{figure}[h]
  \begin{center}
    \begin{tikzpicture}[scale=0.75,y=-1cm]
\sf
\filldraw[semithick,black] (3.3,6.3) circle (0.1cm);
\filldraw[semithick,black] (4.4,7.2) circle (0.1cm);
\filldraw[semithick,black] (10,8) circle (0.1cm);
\filldraw[semithick,black] (11.4,5.7) circle (0.1cm);
\filldraw[semithick,black] (12.3,5.6) circle (0.1cm);
\filldraw[semithick,black] (17.2,8) circle (0.1cm);
\filldraw[semithick,black] (18.7,8) circle (0.1cm);
\filldraw[semithick,black] (18.7,5.5) circle (0.1cm);
\filldraw[semithick,black] (11.36667,8) circle (0.1cm);
\filldraw[semithick,black] (5.3,6.2) circle (0.1cm);
\draw[black] (6.3,8) -- (6.3,4.2) -- (2.5,4.2) -- (2.5,8) -- cycle;
\draw[black] (13.3,8) -- (13.3,4.2) -- (9.5,4.2) -- (9.5,8) -- cycle;
\draw[black] (20.5,8) -- (20.5,4.2) -- (16.7,4.2) -- (16.7,8) -- cycle;
\draw[semithick,black] (4.4,4.2) -- (4.4,8);
\draw[semithick,black] (11.4,4.2) -- (11.4,8);
\draw[semithick,black] (18.7,4.2) -- (18.7,8);
\path (3.1,5.9) node[text=black,anchor=base west] {\fontsize{16.0}{19.2}\selectfont{}$a$};
\path (5.2,5.8) node[text=black,anchor=base west] {\fontsize{16.0}{19.2}\selectfont{}$b$};
\path (4.7,7.4) node[text=black,anchor=base west] {\fontsize{16.0}{19.2}\selectfont{}$w$};
\path (9.8,8.7) node[text=black,anchor=base west] {\fontsize{16.0}{19.2}\selectfont{}$a$};
\path (12.5,5.4) node[text=black,anchor=base west] {\fontsize{16.0}{19.2}\selectfont{}$b$};
\path (10.3,5.5) node[text=black,anchor=base west] {\fontsize{16.0}{19.2}\selectfont{}$w$};
\path (17.1,8.6) node[text=black,anchor=base west] {\fontsize{16.0}{19.2}\selectfont{}$a$};
\path (19,5.5) node[text=black,anchor=base west] {\fontsize{16.0}{19.2}\selectfont{}$b$};
\path (11.26667,8.66667) node[text=black,anchor=base west] {\fontsize{16.0}{19.2}\selectfont{}$v$};
\path (18.6,8.56667) node[text=black,anchor=base west] {\fontsize{16.0}{19.2}\selectfont{}$v$};

\end{tikzpicture}%

\caption{Interpolating points to achieve goodness at the next stage.}
\label{f:subdivision}
\end{center}
\end{figure}

\begin{enumerate}[(i)]
\item $Carr_j(a)=Carr_j(b)$, we call this cell $C$. Thus the
  subdivision operation splits $C$ into $A$ and $B$ (if the cut
  contains either point, $A$ and $B$ would be nested) and $A\cap B=W$
  is the co-dimension 1 cut. See the left diagram in Figure
  \ref{f:subdivision}. We now apply Corollary \ref{subtracks}(II) to
  the train tracks $\tau_1,\tau_2,\sigma$ such that $V(\tau_1)$ and
  $V(\tau_2)$ are the two splits of $V(\tau)=C$ (so $\tau_1,\tau_2$
  have branches $e_1,e_2$ that intersect) and $\sigma$ is a common
  subtrack of $\tau_1,\tau_2$ obtained by deleting $e_1$ from $\tau_1$
  or $e_2$ from $\tau_2$ (these tracks exist by Proposition
  \ref{ursula+++}). Therefore we obtain a curve $w\in P(\sigma)=W$,
  and if $d(a,b)\leq C_i$ we have in addition that $d(a,w)\leq
  C_{i+1}$, $d(b,w)\leq C_{i+1}$. We insert $w$ in the sequence
  between $a$ and $b$. The consecutive curves in $a,w,b$ satisfy the
  $\mathcal C_{j+1}$-goodness condition.

\item $Carr_j(a)\subsetneq Carr_j(b)$. This is depicted in the other
  two diagrams in Figure \ref{f:subdivision}. Notice that the cut $W$
  cannot contain $a$, or else the goodness condition would hold in
  $\mathcal C_{j+1}$. There are two further subcases. If $b$ does not
  belong to $W$ either, we are in the situation of the middle
  diagram. First apply Corollary \ref{subtracks}(II) as in (i) above
  to find $w\in W$. Then apply Corollary \ref{subtracks}(I) to curves
  $a$ and $w$ to find a curve $v$ carried by the intersection of the
  $\mathcal C_{j+1}$-carriers of $a$ and $w$. Finally, interpolate to
  get the sequence $a,v,b$. The other subcase is that $b\in W$,
  depicted in the right diagram in Figure \ref{f:subdivision}. We
  again interpolate $v$ in the intersection of $\mathcal
  C_{j+1}$-carriers of $a$ and $b$.
\end{enumerate}

Whenever we apply Corollary \ref{subtracks} it may happen that
conclusion (1) occurs. In that case we stop the process and do not
attempt to define a $\mathcal C_{j+1}$-good sequence.

To the $\mathcal C_j$-good sequence ${\bf a}_j=a_0,a_1,\dots,a_m$
constructed in this way we will associate a {\it dimension sequence}
$D({\bf a}_j)$ inductively. This is a sequence of nonnegative integers
$d_0,d_1,\dots,d_m$ with the requirement that the dimension of
$Carr_j(a_i)$ is $\leq d_i$. It is also constructed inductively. For
the initial sequence we take the dimensions of the $\mathcal C_0$-carriers.
Inductively, we extend the dimension sequence. For each curve $x$ that
is inserted when extending the sequence from ${\bf a}_j$ to ${\bf
  a}_{j+1}$ define the corresponding integer as the dimension of
$Carr_{j+1}(x)$. For curves that were part of the sequence ${\bf a}_j$
leave the value unchanged. Thus the number associated to a curve in
the sequence is the dimension of its carrier when the curve first
appeared. The dimension of the carrier of a curve may decrease, but
the value in the dimension sequence is unchanged.

The following proposition summarizes the essential features of the
construction. 

\begin{prop}
Suppose that a curve $x$ got inserted between the curves $a,b$ in a
$\mathcal C_j$-good sequence ${\bf a}_j$.
\begin{enumerate}[(i)]
\item The value of the dimension sequence at $x$ is strictly less than
  at both $a$ and $b$.
\item If $d(a,b)\leq C_i$ for some $i$ then $d(a,x)\leq C_{i+2}$ and
  $d(b,x)\leq C_{i+2}$.
\end{enumerate}
\end{prop}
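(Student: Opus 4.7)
The plan is to verify both claims by a direct case analysis on the three possible insertion types---case (i), and the two subcases of case (ii)---described in the interpolating curves process above.

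For part (i), the key observation is monotonicity of carriers: since refining the cell structure can only shrink carriers, the dimension sequence value at a pre-existing curve $c$ satisfies $d_c \geq \dim Carr_{j+1}(c)$, so it suffices to show that in each case $\dim Carr_{j+1}(x) < \dim Carr_{j+1}(a)$ and $\dim Carr_{j+1}(x) < \dim Carr_{j+1}(b)$. In case (i), the inserted $w$ lies in the codimension-one cut $W$, which is a proper face of both codimension-zero splits $A=Carr_{j+1}(a)$ and $B=Carr_{j+1}(b)$, so $\dim Carr_{j+1}(w) \leq \dim W = \dim A - 1$. In the middle subcase of (ii), the inserted $v$ satisfies $Carr_{j+1}(v) \subseteq Carr_{j+1}(a) \cap W$; since $a \notin W$ (otherwise $Carr_{j+1}(a) \subseteq W \subseteq \overline{Carr_{j+1}(b)}$ would force nestedness), this intersection is a proper face of $Carr_{j+1}(a)$, giving $\dim Carr_{j+1}(v) < \dim Carr_{j+1}(a) \leq \dim Carr_j(a) < \dim Carr_j(b) \leq d_b$. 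The right subcase is handled similarly, using that the non-nestedness of $Carr_{j+1}(a)$ and $Carr_{j+1}(b)$ forces their intersection to be a proper face of each.

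For part (ii), the two constants $C_{i+1}$ and $C_{i+2}$ in the inductive definition are designed precisely to accommodate one or two successive applications of Corollary~\ref{subtracks}, with the built-in factor $2^n$ providing slack for triangle-inequality accumulations. In case (i), one application of Corollary~\ref{subtracks}(II)---with Proposition~\ref{ursula+++} supplying the required central-split track $\sigma$---yields $w$ with $d(a,w), d(b,w) \leq C_{i+1} \leq C_{i+2}$. In the right subcase of (ii), Corollary~\ref{subtracks}(I) applied to $Carr_{j+1}(a)$ and $Carr_{j+1}(b) \subseteq W$ as subtracks of the ambient split cell gives $v$ with $d(a,v), d(b,v) \leq C_{i+1}$. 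In the middle subcase, I would chain the two corollaries: first Corollary~\ref{subtracks}(II) produces $w \in W$ close to both $a$ and $b$, then Corollary~\ref{subtracks}(I) applied to the pair $(a,w)$ (viewing $Carr_{j+1}(a)$ and $W$ as subtracks of the ambient split cell) produces $v$ with $d(a,v), d(w,v) \leq C_{i+2}$, and the triangle bound $d(b,v) \leq d(b,w) + d(w,v)$ is absorbed into $C_{i+2}$ by the $2^n$ slack.

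The hardest step will be the middle subcase of (ii), where one must simultaneously argue (a) the strict dimension drop $\dim Carr_{j+1}(v) < \dim Carr_{j+1}(a)$, which relies on extracting $a \notin W$ from the non-nestedness hypothesis, and (b) the chained distance estimate, which turns two Corollary applications plus a triangle inequality into a single bound by $C_{i+2}$. The remaining cases follow a shorter version of the same template.
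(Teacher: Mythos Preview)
Your approach is exactly what the paper intends: the proposition is stated there without proof as a summary of the construction, with only the one-line remark that two applications of Corollary~\ref{subtracks} account for the jump to $C_{i+2}$. Your case analysis across the three insertion types (i), (ii)-middle, and (ii)-right, and your use of the monotonicity $d_c \geq \dim Carr_{j+1}(c)$ for part (i), are the right unpacking of that construction.

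One point deserves care. In the middle subcase you write that the bound $d(b,v)\leq d(b,w)+d(w,v)\leq C_{i+1}+C_{i+2}$ is ``absorbed into $C_{i+2}$ by the $2^n$ slack.'' Strictly speaking this does not give $\leq C_{i+2}$: even granting $C_{i+2}\geq 2^n C_{i+1}$ (which follows if one always takes $C'\geq C$ in Corollary~\ref{subtracks}), you only get $d(b,v)\leq (1+2^{-n})C_{i+2}$. The paper's own statement carries the same imprecision; since the constants are used only crudely in Proposition~\ref{p:process} (``we are happy with very crude estimates''), this is harmless for the argument, but you should not claim the inequality is literally $\leq C_{i+2}$. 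Either state the bound as $\leq 2C_{i+2}$, or note that the downstream application tolerates any fixed multiple.
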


In (ii) we may be applying Corollary \ref{subtracks} twice, and this
is why the conclusion involves $C_{i+2}$.

The following lemma can be proved by a straightforward induction on
$n$. We will apply it to dimension sequences.

\begin{lemma}\label{sequence bound}
Let ${\bf D}_i=(x_{i0},x_{i1},\dots,x_{ij_i})$, $i=0,1,2,\dots,N$ be a
sequence of finite sequences of nonnegative integers. Assume the
following:
\begin{enumerate}[(a)]
\item ${\bf D}_0=(x_{00},x_{01})$ has length 2 and
  $x_{00},x_{01}\leq n$,
\item for $i\geq 0$ the sequence ${\bf D}_{i+1}$ is obtained from
  ${\bf D}_i$ by inserting between some consecutive terms a
  nonnegative integer strictly smaller than each of the two terms.
\end{enumerate}
Then $j_N\leq 2^{n}$.
\end{lemma}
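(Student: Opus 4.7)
The plan is to prove by induction on $m = \min(a,b)$ that any sequence reachable by the insertion process from a starting pair $(a,b)$ has length at most $2^m + 1$. Applied to the starting pair $(x_{00},x_{01})$ with $\min(x_{00},x_{01})\leq n$, this gives length $\leq 2^n+1$, i.e.\ $j_N\leq 2^n$, as desired.

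For the base case $m=0$, at least one of $a,b$ is $0$, so no nonnegative integer is strictly smaller than both; no insertion is ever possible and the length stays at $2 = 2^0+1$.

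For the inductive step, fix $m\geq 1$ and assume the bound for all pairs with smaller $\min$. Consider any sequence of insertions starting from $(a,b)$ with $\min(a,b)=m$ and look at the \emph{first} insertion: some integer $c < m$ placed between $a$ and $b$, yielding $(a,c,b)$. The key observation is that every subsequent insertion lies either strictly between $a$ and $c$ or strictly between $c$ and $b$, and these two sides evolve independently (insertions on one side never interact with the other). Hence the final sequence is the concatenation of a sequence obtained from the starting pair $(a,c)$ with one obtained from $(c,b)$, joined at the shared element $c$. Since $c\leq m-1 < a$ and $c\leq m-1 < b$, we have $\min(a,c)=\min(c,b)=c\leq m-1$, and the inductive hypothesis applies to each side.

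The total length is therefore at most $(2^c+1)+(2^c+1)-1 \leq 2\cdot 2^{m-1}+1 = 2^m+1$. Since the length of $\mathbf D_N$ is $j_N+1$ and $\min(x_{00},x_{01})\leq n$, this yields $j_N\leq 2^n$. The argument is entirely combinatorial; the only small subtlety is the clean decomposition into independent left/right halves after the first insertion, which is immediate because insertions only add elements between existing consecutive pairs and never merge across a previously inserted element.
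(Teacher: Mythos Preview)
Your argument is correct and is precisely the ``straightforward induction on $n$'' that the paper alludes to without spelling out. Your choice to induct on $m=\min(a,b)$ is the natural way to make the induction go through, and the decomposition into independent left and right halves after the first insertion is exactly the right observation.
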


For example, $33, 323, 31213, 301020103$ is such a sequence with
$n=3$, $N=3$, $j_3=8$.

\begin{prop} \label{p:process}
For every $C>0$ there is $C'=C'(C,\Sigma)$ so that the following
holds.  Let $\mathcal C_j$ be an excellent sequence of cell structures
with all cells (i.e. their vertex cycles) at distance $\leq K$ from
$*$. Suppose $A,B$ are two cells in $\mathcal C_j$. If $a\in int(A)$
and $b\in int(B)$ and $d(a,b)\leq C$ then either
\begin{itemize}
\item $d(*,a),d(*,b)\leq K+C'$, or
\item there is a curve $c$ contained in a cell of $\mathcal C_j$ which is
  contained in a face of each $A$, $B$ such that
  $d(a,c),d(b,c)\leq C'$.
\end{itemize}
\end{prop}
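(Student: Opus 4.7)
The plan is to apply the interpolating curves process of Section 4.1 to the pair $(a,b)$ iteratively through all $j$ subdivision stages, and then read off from the resulting $\mathcal{C}_j$-good sequence either the required curve $c$ or the distance bound to $*$.

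Start with the length-$2$ sequence $(a,b)$. Both curves are carried by $\tau$, so after prepending at most one interpolated curve whose $\mathcal{C}_0$-carrier is all of $V(\tau)$, the sequence is $\mathcal{C}_0$-good. Iterate the refinement from $\mathcal{C}_k$-good to $\mathcal{C}_{k+1}$-good for $k=0,\dots,j-1$: each consecutive pair that fails nestedness receives inserted curves as prescribed by Cases $(\mathrm{i})$ and $(\mathrm{ii})$ of the process, each invoking Corollary \ref{subtracks}. Two quantitative bounds control the output. Lemma \ref{sequence bound} caps the final length by $m\leq 2^n$, where $n=\dim\mathcal{PML}$, since every inserted curve has dimension strictly less than both neighbors. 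The proposition summarizing the essential features of the construction translates a consecutive distance bound $C_i$ into $C_{i+2}$ per insertion, so with recursion depth at most $n$ the final consecutive distances are bounded by a constant $C'$ depending only on $C$ and $\Sigma$.

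The argument now splits into two regimes. If at any stage Corollary \ref{subtracks} returns conclusion $(1)$, the process halts, and some curve $a'$ in the current sequence satisfies $d(a', B(\theta'))\leq C'$ for a train track $\theta'$ associated to a cell in the subdivision history. By Proposition \ref{distance}$(4)$ combined with the hypothesis that top-dimensional cells of $\mathcal{C}_j$ have vertex cycles within $K$ of $*$, we obtain $d(B(\theta'),*)\leq K+O(1)$, hence $d(a',*)\leq K+O(1)$; chaining through the at most $2^n$ bounded-distance hops in the sequence yields $d(a,*),d(b,*)\leq K+C'$, the first alternative. Otherwise the process terminates successfully with a $\mathcal{C}_j$-good sequence $(a=a_0,a_1,\ldots,a_m=b)$. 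Take $c$ to be a curve inserted by Case $(\mathrm{i})$ at the first stage $k+1$ separating the ancestors of $a$ and $b$: its carrier $\mathit{Carr}_{k+1}(c)=W$ is the codimension-one cut between the sibling cells $C_1,C_2$ whose $\mathcal{C}_j$-refinements contain $A$ and $B$ respectively. Because the subdivision process of Section \ref{subdividing} splits cells coherently with their parents, $\mathit{Carr}_j(c)\subseteq W$ sits in a common face of $A$ and $B$ in $\mathcal{C}_j$, and the bounds $d(a,c),d(b,c)\leq C'$ follow from the chaining.

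The principal obstacle is the success-case extraction of $c$: one must verify that the final carrier $\mathit{Carr}_j(c)\subseteq W$ really does land in a face common to both $A$ and $B$, rather than in a piece of $W$ that is detached from one of them by later subdivisions. Resolving this uses the train-track description of cells from Proposition \ref{distance} together with the coherence of the subdivision procedure, ensuring that descendants of the cut $W$ remain on the shared boundary of the descendants of $C_1$ and $C_2$ that contain $a$ and $b$. The length and distance bookkeeping is then a routine iterative application of Lemma \ref{sequence bound} and the preceding distance proposition.
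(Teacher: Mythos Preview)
Your overall architecture---run the interpolating process, bound length and consecutive distances via Lemma \ref{sequence bound}, and split into the two regimes according to whether Corollary \ref{subtracks} ever returns conclusion (1)---matches the paper. But the success-case extraction of $c$ has a real gap, and your proposed fix is not the one that works.

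You propose to take $c$ as the curve inserted at ``the first stage $k+1$ separating the ancestors of $a$ and $b$'', and then argue that later subdivisions keep $Carr_j(c)$ in a common face of $A$ and $B$. This is where things break. There is no guarantee that the cut $W$ at stage $k+1$ descends to something contained in both $A$ and $B$: subsequent subdivisions of $W$ and of the adjacent cells are independent, and the $\mathcal C_j$-carrier of your $c$ can easily lie in a piece of $W$ that borders $A$ but not $B$ (or neither). The ``coherence'' you invoke from Proposition \ref{distance} gives each cell a train track, but it does not give the inclusion you need. The paper avoids this problem entirely by a different (and essential) step: once the $\mathcal C_j$-good sequence $a=a_0,\dots,a_N=b$ is in hand, it \emph{pushes the peaks down}. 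Whenever the carrier sequence has a local maximum $Carr_j(a_i)\subsetneq Carr_j(a_{i+1})=\cdots=Carr_j(a_{k-1})\supsetneq Carr_j(a_k)$, one applies Corollary \ref{subtracks}(I) to replace the middle block by a single curve $x$ with $Carr_j(x)\subset Carr_j(a_i)\cap Carr_j(a_k)$. Iterating (a bounded number of times, say $\leq n2^n$) produces a sequence whose carriers are unimodal: nonincreasing then nondecreasing. Now take $c$ at the minimum; by the chain of inclusions $Carr_j(c)\subset Carr_j(a)=A$ and $Carr_j(c)\subset Carr_j(b)=B$, so $c$ lies in a cell contained in a face of each. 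This is the missing idea.

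A smaller issue: your treatment of the initial step is off. ``Prepending'' a curve carried by all of $V(\tau)$ does not make $(a,b)$ $\mathcal C_0$-good, and inserting such a curve between $a$ and $b$ would need distance control you do not have. The paper handles this by invoking Lemma \ref{2 faces new} directly on the pair $(a,b)$ with the $\mathcal C_0$ train tracks (whose intersection numbers are bounded since $\mathcal C_0$ is fixed), obtaining either the $d(*,a),d(*,b)$ bound immediately or a curve $c\in Carr_0(a)\cap Carr_0(b)$ within bounded distance of both, making $(a,c,b)$ $\mathcal C_0$-good.
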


\begin{proof}
First assume that $a,b$ is a $\mathcal C_0$-good sequence. We set
$C_0=C$ and define $C_i$ inductively as above.
Run the process starting with $a,b$. There are now two possibilities.

{\it Case 1.} The process produces a $\mathcal C_j$-good sequence
$a=a_0,a_1,\cdots,a_N$. From Lemma \ref{sequence bound} we see that
$N\leq 2^n$ where $n=\dim \mathcal{ML}$. Thus there were at most
$2^n-1$ insertions and this implies that $d(a_i,a_{i+1})\leq
C_{2(2^n-1)}$ for any two consecutive curves $a_i,a_{i+1}$. 

The sequence of $\mathcal C_j$-carriers $Carr_j(a_i)$, $i=0,1,\dots$
either increases or decreases (or stays the same) at every step. We
now modify the sequence, by ``pushing the peaks down'' so that an
initial part of the sequence of carriers is nonincreasing, and the
rest is nondecreasing. Let $a_i,a_{i+1},\dots,a_k$ be a subsequence of
consecutive curves such that $$Carr_j(a_i)\subsetneq
Carr_j(a_{i+1})=Carr_j(a_{i+2})=\dots=Carr_j(a_{k-1})\supsetneq
Carr_j(a_k)$$ First we pass to the length 3 subsequence
$a_i,a_{i+1},a_k$. The distance $d(a_{i+1},a_k)\leq 2^nC_{2(2^n-1)}$
(we are happy with very crude estimates), so applying Corollary
\ref{subtracks}(I) we find a curve $x$ with $Supp_j(x)\subset
Carr_j(a_i) \cap Carr_j(a_k)$ and with $d(a_i,x),d(a_k,x)\leq
C_{2(2^n-1)+2}$. (If conclusion (1) occurs see Case 2.) Continuing in
this way produces the desired sequence. The number of steps that
consist of pushing the peaks is bounded, e.g. by $n2^n$, so at the end
the distance between any two consecutive curves is bounded by
$C_{2(2^n-1)+2n2^n}$. Finally, pass to a length 3 sequence $a,c,b$
where $c$ has the minimal carrier, and set $C'=2^nC_{2(2^n-1)+2n2^n}$.

{\it Case 2.} At some stage in the process, when applying Corollary
\ref{subtracks}, conclusion (1) occurs. This applies also to the part
of the procedure when we push the peaks down. Thus we have a sequence
$a=a_0,a_1,\dots,a_N=b$ with $N\leq 2^n$, $d(a_i,a_{i+1})\leq
C_{2(2^n-1)+2n2^n}$, and for some $i$ we have $d(*,a_i)\leq
C_{2(2^n-1)+2n2^n+1}$. This implies by the triangle inequality that 
$$d(*,a),d(*,b)\leq C_{2(2^n-1)+2n2^n+1}+2^nC_{2(2^n-1)+2n2^n}$$
and we may take $C'$ to be this bound.

Finally, consider the general case when $a,b$ is not a $\mathcal
C_0$-good sequence. Let $A=Carr_0(a)$, $B=Carr_0(b)$. Thus
$A=V(\alpha), B=V(\beta)$ for certain tracks $\alpha,\beta$. Lemma
\ref{2 faces new} gives that either $d(*,a),d(*,b)$ are uniformly
bounded, as functions of $\Sigma$ and $C$, or there is a curve $c\in
A\cap B$ within uniform distance, call it $C_0$, from $a,b$. (Note
here that since $\mathcal C_0$ is a fixed cell structure, the
intersection number between any two tracks defining it is uniformly
bounded, so Lemma \ref{2 faces new} applies uniformly.) Thus
$a,c,b$ is a $\mathcal C_0$-good sequence and the procedure above
proves the statement.
\end{proof}

\section{Capacity dimension of $\mathcal{EL}$}

\subsection{Capacity dimension}

Let $(Z,\rho)$ be a metric space. The notion of capacity dimension of $Z$ was
introduced by Buyalo in \cite{buyalo}. One of several possible
equivalent definitions is the following, see \cite[Proposition
  3.2]{buyalo}. We also note that for bounded metric spaces, such as
the boundary of a hyperbolic space with visual metric, capacity
dimension agrees with the Assouad-Nagata dimension. See \cite{nagata}.

\begin{definition}
The capacity dimension of a metric space $Z$ is the infimum of all
integers $m$ with the following property. There exists a constant
$c>0$ such that for all sufficiently small $s>0$, $Z$ has a
$cs$-bounded covering with $s$-multiplicity at most $m+1$.
\end{definition}

The covering $\mathcal L$ is $cs$-bounded if all elements have
diameter $<cs$ and the $s$-multiplicity of $\mathcal L$ is $\leq m+1$
if every $z\in Z$ is at distance $<s$ from at most $m+1$ elements of
$\mathcal L$.

We will produce covers that resemble cell structures and whose
thickenings resemble handle decompositions. It is more convenient here
to index the handles starting with 1, rather than with 0. We will use
following form of the definition of capacity dimension.

\begin{prop}\label{capdim}
Suppose that there is a constant $c>10$ such that for all sufficiently
small $s>0$ there is a cover $\mathcal K$ of $Z$ with the following
properties:
\begin{itemize}
\item The collection $\mathcal K$ is the disjoint union of
  subcollections $\mathcal K_1,\mathcal K_2,\cdots,\mathcal K_{m+1}$.
\item The diameter of any set in $\mathcal K$ is $\leq s$.
\item If $A,B\in\mathcal K_i$ are distinct elements in the same
  subcollection and if $a\in A$, $b\in B$, $\rho(a,b)<s/c^{3i-1}$ then
  there is some $e\in E\in \mathcal K_k$ for some $k<i$ so that
  $\rho(a,e)<s/c^{3i-2}$ and $\rho(b,e)<s/c^{3i-2}$.
\end{itemize}
Then the capacity dimension of $Z$ is at most $m$.
\end{prop}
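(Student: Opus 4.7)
My plan is to thicken each $A \in \mathcal K_i$ by a radius $r_i$ that geometrically shrinks with $i$, and then to cut each thickening down to the points whose minimum level in the thickened cover equals $i$. Concretely I will set $r_i := s/(10c^{3i-1})$ and $A^* := N_{r_i}(A)$ for $A \in \mathcal K_i$. Since $c > 10$, each $A^*$ has diameter at most $s + 2r_1 < cs$, and since $\mathcal K$ covers $Z$ so does $\{A^*\}$.

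For each $z \in Z$ let $i(z) := \min\{i : z \in A^* \text{ for some } A \in \mathcal K_i\}$, and define the refined cover by
$$\hat A := A^* \cap \{y \in Z : i(y) = i\}, \qquad A \in \mathcal K_i.$$
The $\hat A$'s still cover $Z$ because every $z$ lies in some $A^*$ with $A \in \mathcal K_{i(z)}$; crucially, no point of $\hat A$ lies in any thickening $E^*$ with $E \in \mathcal K_k$ for $k < i$.

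The heart of the argument is the claim that, at scale $\bar s := s/(10c^{3m+2}) = r_{m+1}$, each $z \in Z$ is $\bar s$-close to at most one $\hat A$ per level. If instead $\hat A, \hat B$ were distinct in level $i$ with witnesses $y_A, y_B$ within $\bar s$ of $z$, then picking $a \in A$, $b \in B$ with $\rho(y_A,a), \rho(y_B,b) < r_i$ gives
$$\rho(a,b) < 2r_i + 2\bar s < s/c^{3i-1}.$$
The third hypothesis produces $e \in E \in \mathcal K_k$ with $k < i$ and $\rho(a,e), \rho(b,e) < s/c^{3i-2}$, whence $\rho(y_A,e) < r_i + s/c^{3i-2}$. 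Using $c^2 > 10$ one checks this upper bound is smaller than $r_{i-1}$, hence than $r_k$, so $y_A \in E^*$ --- contradicting $i(y_A) = i$.

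Summing the same-level bound over the $m+1$ levels yields $\bar s$-multiplicity at most $m+1$. To match the definition of capacity dimension, given a target scale $t$ I will apply the hypothesis at $s := 10c^{3m+2} t$ (small when $t$ is); the resulting cover has diameter bounded by a fixed constant times $t$ and $t$-multiplicity at most $m+1$, giving $\capdim Z \leq m$. The main obstacle is the triple bookkeeping of constants: the $r_i$ must be small enough for the diameter bound, large enough to trigger the hypothesis from a same-level coincidence, and yet the resulting witness $e$ must sit inside the strictly coarser thickening $E^*$. The factor $c^3$ separating consecutive levels in the hypothesis is exactly what makes all three fit with room to spare.
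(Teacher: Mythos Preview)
Your proof is correct and follows essentially the same approach as the paper: thicken each $A\in\mathcal K_i$ by a radius that shrinks geometrically with $i$, remove the union of lower-level thickenings, and use the third hypothesis to rule out two same-level pieces at small scale. The only differences are cosmetic---the paper uses thickening radii $s/c^{3i}$ rather than your $s/(10c^{3i-1})$, and phrases the subtraction inductively as $H(K)=N_{s/c^{3i}}(K)\setminus\bigcup_{k<i}H(K')$ rather than via your minimum-level function $i(z)$, but these yield the same refined cover and the same contradiction.
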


\begin{proof}
Inductively on $i$, for each $K\in \mathcal K_i$ define the associated
``handle'' $$H(K)=N_{s/c^{3i}}(K)-\cup_{K'\in \mathcal K_k, k<i}
H(K')$$ It is clear that the collection of all handles forms a cover
of $Z$ and that the diameter of each element is bounded by
$s+2s/c^3$. We will argue that the $s/c^{3m+4}$-multiplicity of the
cover is $\leq m+1$. Suppose $z\in Z$ is at distance $<s/c^{3m+4}$
from $m+2$ handles. Then two of the handles have the same index, say
$H(A),H(B)$ with $A,B\in\mathcal K_i$. Thus we have $a_0\in H(A)$ and
$b_0\in H(B)$ with $\rho(a_0,b_0)<2s/c^{3m+4}$. Choose $a\in A$ and
$b\in B$ with $\rho(a,a_0)<s/c^{3i}$ and $\rho(b,b_0)<s/c^{3i}$. Then
$\rho(a,b)<2s/c^{3m+4}+2s/c^{3i}<s/c^{3i-1}$. By assumption there is
$e\in K\in\mathcal K_k$ with $k<i$ and with $\rho(a,e)<s/c^{3i-2}$,
$\rho(b,e)<s/c^{3i-2}$. Thus
$\rho(a_0,e)<s/c^{3i-2}+s/c^{3i}<s/c^{3i-3}\leq s/c^{3k}$, so $a_0\in
H(K)$ or it belongs to a lower index handle, and similarly for
$b_0$. This is a contradiction since $H(A)$ and $H(B)$ are disjoint
from lower index handles.
\end{proof}

\begin{figure}
  \begin{center}
    \begin{tikzpicture}[scale=0.6,y=-1cm]
\sf
\draw[semithick,black] (5.8,5.5) circle (1.42222cm);
\draw[semithick,black] (12.30444,5.5) circle (1.42222cm);
\draw[semithick,black] (12.30444,12.00889) circle (1.42222cm);
\draw[semithick,black] (5.91111,12.00889) circle (1.42222cm);
\draw[black] (12.4,12.1) -- (12.4,5.5) -- (5.8,5.5) -- (5.8,12.1) -- cycle;
\draw[black] (5.8,5.5) -- (12.4,12.1);
\draw[semithick,black] (12.8,6.8) -- (12.8,10.7);
\draw[semithick,black] (7,6.2) -- (11.6,10.8);
\draw[semithick,black] (6.6,6.7) -- (11.1,11.2);
\draw[semithick,black] (7.3,11.7) -- (10.9,11.7);
\draw[semithick,black] (7.2,12.5) -- (11,12.5);
\draw[semithick,black] (6.3,6.8) -- (6.3,10.6);
\draw[semithick,black] (5.4,6.8) -- (5.4,10.7);
\draw[semithick,black] (7.1,5.1) -- (10.9,5.1);
\draw[semithick,black] (7.1,5.9) -- (10.9,5.9);
\draw[semithick,black] (12,6.9) -- (12,10.6);

\end{tikzpicture}%

\hskip 3cm
\begin{tikzpicture}[scale=0.8,y=-1cm]
\sf
\draw[black] (6.36176,8) +(-66:2.83824) arc (-66:66:2.83824);
\draw[black] (13.13824,8) +(-114:2.83824) arc (-114:-246:2.83824);
\draw[black] (7.5,5.4) -- (12,5.4);
\draw[black] (7.5,10.6) -- (12,10.6);

\end{tikzpicture}%

\caption{The handle decomposition associated to a triangulation. Cells
  shaped like the one pictured on the right will result in handle
  decompositions with bad Lebesgue number.}
\label{bad}
\end{center}
\end{figure}
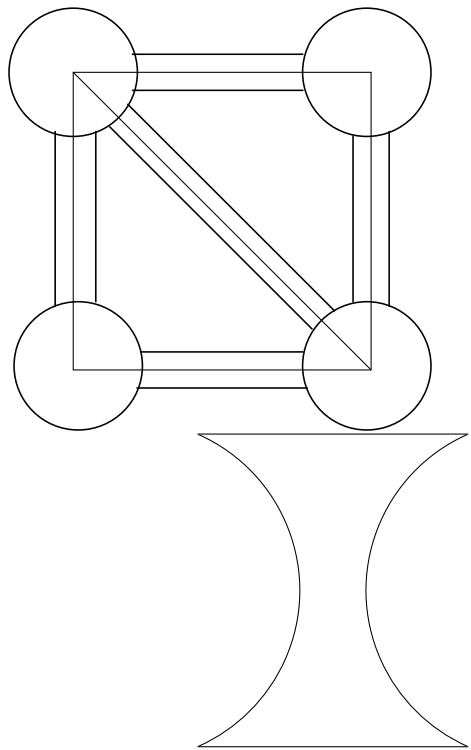

\begin{remark}
In section \ref{the cover} we will show that the cover of $\mathcal
{EL}$ induced by $P_\infty(\sigma)$ as $V(\sigma)$ range over cells in
$\mathcal C_j$ with $\sigma$ large satisfy the assumptions of
Proposition \ref{capdim}. For example, shapes like in Figure \ref{bad}
are ruled out by Proposition \ref{p:process}.
\end{remark}

We also have the following general fact. We thank Vera Toni\'c for
pointing it out to us.

\begin{prop}[\cite{nagata}]\label{vera}
Suppose $Z$ is written as a finite union of (closed) subsets $Z_i$. If
$\capdim Z_i\leq n$ for all $i$, then $\capdim Z\leq n$.
\end{prop}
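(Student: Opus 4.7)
The plan is to reduce Proposition \ref{vera} to the case $k=2$ by induction on the number of pieces, and then to prove the statement for $Z = A \cup B$ with $A, B$ closed in $Z$ and $\capdim A, \capdim B \leq n$. Let $c_0$ be a common constant realizing the capacity-dimension bound for both $A$ and $B$: for all sufficiently small $s>0$ each of $A, B$ admits a $c_0 s$-bounded cover of $s$-multiplicity at most $n+1$.

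Fix such a small $s$, and let $\mathcal{U}^A$ and $\mathcal{U}^B$ be the covers of $A$ and $B$ guaranteed by the hypothesis. My plan is to combine them via a ``thickening-and-carving'' construction, controlled by a small tuning parameter $\e>0$. Set $N = \{z \in Z : \rho(z,A) \leq \e s\}$, a closed neighborhood of $A$, and cover $N$ by the thickened sets $\tilde U := N_{\e s}(U)$ for $U \in \mathcal{U}^A$; this yields a $(c_0 + 2\e)s$-bounded cover of $N$ whose multiplicity is controlled by that of $\mathcal{U}^A$, provided $\e$ is small. On the complement $Z \setminus \mathrm{int}(N)$, which is a closed subset of $B$, take the sets $\tilde V := V \setminus N_{\e s/2}(A)$ for $V \in \mathcal{U}^B$ that meet $Z \setminus N$; these inherit the $c_0 s$-diameter and multiplicity bounds from $\mathcal{U}^B$. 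The union $\tilde{\mathcal{U}}^A \cup \tilde{\mathcal{U}}^B$ covers $Z$ with diameters bounded by $O(c_0 s)$.

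The main obstacle, and where essentially all of the technical work lies, is verifying that the combined cover has $s'$-multiplicity $\leq n+1$ at a scale $s'$ comparable to $s$. Naively the multiplicity could jump to $2(n+1)$ near the interface $\partial N$, where a point may be close both to a $\tilde U$ and to a $\tilde V$. To prevent this, I would invoke the equivalent ``colored-family'' characterization of capacity dimension (a cover is split into $n+1$ subfamilies each consisting of $s$-separated sets), take colored decompositions of $A$ and of $B$, and then merge the color classes: the carving step ensures that sets colored $i$ in $\tilde{\mathcal{U}}^A$ are separated by a distance of order $s$ from sets colored $i$ in $\tilde{\mathcal{U}}^B$, so a single palette of $n+1$ colors suffices for the combined family. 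The parameter $\e$ must be chosen small relative to $c_0$, and the separation radius in the carving must exceed $c_0 s$, so that no two same-colored sets come within the chosen $s'$-scale of one another. This gives the required $O(c_0 s)$-bounded cover of $Z$ with $s'$-multiplicity $\leq n+1$, completing the inductive step.
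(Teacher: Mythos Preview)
The paper does not prove this proposition; it simply cites the literature on Assouad--Nagata dimension (the reference \cite{nagata}) and records it as a known fact. So there is no ``paper's proof'' to compare against, and your task is really to give a self-contained argument.

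Your outline has the right architecture (induction to $k=2$, thickening the $A$-cover, carving the $B$-cover, colored families), but the crucial step---merging the two palettes of $n+1$ colors into a single palette---does not follow from what you wrote. You claim that ``the carving step ensures that sets colored $i$ in $\tilde{\mathcal U}^A$ are separated by a distance of order $s$ from sets colored $i$ in $\tilde{\mathcal U}^B$.'' With your parameters this is false: a thickened set $\tilde U=N_{\e s}(U)$ can reach all the way out to distance $\e s$ from $A$, while a carved set $\tilde V=V\setminus N_{\e s/2}(A)$ begins already at distance $\e s/2$; the two families overlap in the band $\{\e s/2\le \rho(\cdot,A)\le \e s\}$ and are certainly not $s'$-separated for any $s'>0$. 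Reversing the inequality (carving deeper than you thicken) creates a gap in coverage instead. No choice of $\e$ fixes this: coverage forces the thickening radius to be at least the carving radius, while separation forces the opposite.

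The standard repair is a \emph{saturation} step rather than a purely metric carving. Work at two scales: take the $B$-cover at scale $s$, and then enlarge each color-$i$ $A$-set (taken at a suitably larger scale $S\gg c_0 s$) by absorbing every $B$-set that comes within $s$ of it. The unabsorbed $B$-sets are then automatically $\ge s$ away from every enlarged $A$-set, so same-colored sets from the two families are genuinely separated; and because the $B$-sets have diameter $\le c_0 s$ and the $A$-sets were $S$-separated within each color, the enlarged $A$-sets remain separated and of controlled diameter. This is how the finite-union theorem is proved in the Assouad--Nagata setting (see e.g.\ Lang--Schlichenmaier or Brodskiy--Dydak--Levin--Mitra); your write-up should make this absorption explicit rather than relying on metric carving alone.
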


\subsection{Visual size and distance}

Recall that a metric $\rho$ on the boundary $Z$ of a
$\delta$-hyperbolic space $X$ is said to be {\it visual} if there is a
basepoint $*\in X$ and constants $a>1$ and $c_1,c_2>0$ such that
$$c_1a^{-(z|z')}\leq \rho(z,z')\leq c_2a^{-(z|z')}$$
for all $z,z'\in Z$, where $(\cdot|\cdot)$ denotes the Gromov product
$$(x|x')=\frac 12 (d(*,x)+d(*,x')-d(x,x'))$$
on $X$, extended naturally to $Z$. See
\cite[Ch.~7]{ghys} for more details and for the construction of visual metrics.

Also recall that, in a $\delta$-hyperbolic space, the Gromov product
$(a|b)$ is, to within a uniform bound that depends on $\delta$, the
distance between the basepoint $*$ and any geodesic $[a,b]$. The same
is true when $a,b$ are distinct points at infinity. We may also
replace geodesics $[a,b]$ with quasi-geodesics, but then the uniform
bound depends also on quasi-geodesic constants.

A $\delta$-hyperbolic space $X$ is {\it visual} \cite{bonk-schramm}
for some (every) basepoint $x_0$ there exists $C>0$ such that for
every $x\in X$ there is a $(C,C)$-quasi-geodesic ray in $X$ based at
$x_0$ and passing through $x$. Equivalently, $X$ is
the coarse convex hull of the boundary $\partial X$. Any
$\delta$-hyperbolic space whose isometry group acts coboundedly and
that contains a biinfinite quasi-geodesic line is visual. Thus a curve
complex is visual.

\begin{thm}\cite{buyalo}
Let $X$ be a visual $\delta$-hyperbolic metric space and $Z$ its Gromov
boundary endowed with a visual metric. Then 
$$\asdim(X)\leq \capdim(Z)+1$$
\end{thm}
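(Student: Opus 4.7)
The plan is to construct, for each prescribed $R>0$, a uniformly bounded open cover of $X$ with $R$-multiplicity at most $m+2$, where $m=\capdim(Z)$; this is exactly the definition of $\asdim(X)\leq m+1$. The bridge between $X$ and $Z$ is the shadow map: since $X$ is visual, each $x\in X$ lies on a $(C,C)$-quasi-geodesic $\gamma_x$ from the basepoint $*$, and I let $z(x)\in Z$ be its endpoint at infinity. Stability of quasi-geodesics, together with the visual identity $c_1 a^{-(z\mid z')}\leq \rho(z,z')\leq c_2 a^{-(z\mid z')}$, yields two comparabilities: (a) if $d(x,x')\leq R$ then $\rho(z(x),z(x'))\leq c_2\, a^{\kappa(R)}\, a^{-\min(d(*,x),d(*,x'))}$, and (b) points whose shadows are $Z$-separated by at least $c_1 a^{-k}$ and whose $X$-depths are at most $k+T$ are at $X$-distance bounded by a function of $T$ alone.

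With $W$ a large integer to be chosen as a function of $R$, I decompose $X$ into the central ball $B(*,W)$ and depth-annuli $A_k=\{x:kW\leq d(*,x)<(k+1)W\}$. For each large $k$, set $s_k=c_2 a^{\kappa}a^{-kW}$ and invoke the definition of $\capdim Z\leq m$ to obtain a $cs_k$-bounded cover $\mathcal{L}_k$ of $Z$ with $s_k$-multiplicity at most $m+1$. Pulling back, for each $L\in\mathcal{L}_k$ define $\widehat L:=\{x\in A_k:z(x)\in L\}$. By estimate (b), each $\widehat L$ has $X$-diameter uniformly bounded in terms of $W$ and the data $C,\delta,c_1,c_2,a$. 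The collection $\{B(*,k_0 W)\}\cup\bigcup_k\{\widehat L:L\in\mathcal{L}_k\}$ is then a uniformly bounded cover of $X$.

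To bound the $R$-multiplicity, fix $x\in X$; choosing $W\gg R$, the ball $B(x,R)$ meets at most the two annuli $A_k,A_{k+1}$ adjacent to $x$. Within each single $A_j$, points at $X$-distance at most $R$ have shadows within $s_j$ of one another by (a), so $B(x,R)$ meets at most $m+1$ pull-backs from $\mathcal{L}_j$ by the $s_j$-multiplicity property of $\mathcal{L}_j$. Summing over the (at most two) relevant levels gives the naive bound $R$-multiplicity $\leq 2(m+1)$. To sharpen this down to $m+2$ I would choose the successive covers $\mathcal{L}_k$ coherently across scales: arrange, at each step, that $\mathcal{L}_{k+1}$ is a refinement of a restriction of $\mathcal{L}_k$ on those elements that straddle the annular interface, so that any element of $\mathcal{L}_{k+1}$ whose pull-back meets $B(x,R)$ in the transition layer is contained in (the pull-back of) a unique element of $\mathcal{L}_k$ already accounted for.

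The main obstacle is precisely this coherent-refinement step, i.e.\ passing from $2(m+1)$ to $m+2$. It is here that one really needs capacity dimension, i.e.\ the Assouad--Nagata version of $\asdim$, rather than mere asymptotic control: one must produce, simultaneously at all scales $s_k$, covers of $Z$ whose elements at finer scales subdivide those at coarser scales, so that the only additional cover elements seen by an $R$-ball in a transition layer come from subdivisions of cover elements already counted. Once this multi-scale coherence is in hand, the $m+2$ bound follows by essentially the same handle bookkeeping used in the proof of Proposition \ref{capdim}, and the theorem follows.
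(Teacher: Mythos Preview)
This theorem is not proved in the paper; it is cited from Buyalo \cite{buyalo} and used as a black box, so there is no in-paper proof to compare your attempt against.

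On its own merits, your sketch follows the standard architecture (annular decomposition of $X$ by depth, shadow map to $Z$, pulling back capacity-dimension covers of $Z$ at scale $s_k\sim a^{-kW}$ to each annulus), and your estimates (a) and (b) are correct up to constants. The naive $R$-multiplicity bound $2(m+1)$ you reach is genuine, and you correctly locate the gap at the passage to $m+2$. However, your proposed fix---arranging that $\mathcal{L}_{k+1}$ refine $\mathcal{L}_k$ on transition layers so that new elements are ``already accounted for''---is stated as a plan but never executed, and it is not a bookkeeping detail. The definition of capacity dimension hands you an independent cover at each scale; producing a coherent nested tower across all scales while preserving the multiplicity and Lebesgue-number control at every level is a genuine construction, and this is essentially where the work in Buyalo's argument lies. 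As written, your proposal establishes only $\asdim X\leq 2\,\capdim Z+1$, not the stated bound.
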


\begin{lemma}\label{visual size}
Assume that $\sigma$ is a large track obtained from $\tau$ by a sequence of
splittings.  Using $B(\tau)$ as a basepoint, the visual diameter of
$P_\infty(\sigma)$ is $a^{-d(*,B(\sigma))}$, to within a bounded factor.
\end{lemma}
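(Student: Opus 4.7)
The plan is to prove both the upper bound $\diam_\rho P_\infty(\sigma) \leq C a^{-d(*,B(\sigma))}$ and the matching lower bound, with constants depending only on $\Sigma$. Since $\rho(\lambda,\lambda') \asymp a^{-(\lambda|\lambda')_*}$ for a visual metric, everything reduces to two-sided control on the Gromov product $(\lambda|\lambda')_*$ of pairs $\lambda,\lambda' \in P_\infty(\sigma)$, where $* = B(\tau)$. Recall that in a $\delta$-hyperbolic space the Gromov product is coarsely the distance from the basepoint to any (quasi-)geodesic joining the two points.

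For the upper bound on diameter, I want to show $(\lambda|\lambda')_* \geq d(*,B(\sigma)) - C_1$ for all $\lambda,\lambda' \in P_\infty(\sigma)$. Given such $\lambda,\lambda'$, split $\sigma$ further toward each of them, which by Theorem \ref{splitting} produces reparametrized quasi-geodesics of uniform quality from $B(\sigma)$ to $\lambda$ and to $\lambda'$, lying entirely in $S(\sigma)$. Concatenating with a geodesic from $*$ to $B(\sigma)$, I obtain quasi-geodesic rays from $*$ to $\lambda$ and to $\lambda'$. By Lemma \ref{B}, $B(\sigma)$ is coarsely the closest point of $S(\sigma)$ to $*$, and by Lemma \ref{3.3} the set $S(\sigma)$ is uniformly quasi-convex; hence any other quasi-geodesic ray from $*$ to $\lambda$ or $\lambda'$ must also pass within uniformly bounded distance of $B(\sigma)$. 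Hyperbolicity then yields $(\lambda|\lambda')_* \geq d(*,B(\sigma)) - C_1$, and therefore $\rho(\lambda,\lambda') \leq c_2 a^{-d(*,B(\sigma))+C_1}$.

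For the lower bound on diameter, the strategy is to argue by contradiction. Suppose that $(\lambda|\lambda')_* \geq d(*,B(\sigma)) + K$ for every pair $\lambda,\lambda' \in P_\infty(\sigma)$. Then by hyperbolicity every biinfinite quasi-geodesic in $\mathcal C(\Sigma)$ asymptotic to two points of $P_\infty(\sigma)$ stays outside the ball of radius $d(*,B(\sigma)) + K - C_2$ about $*$. By Lemma \ref{3.4}, $S(\sigma)$ is the coarse convex hull of the ending laminations it carries, so $S(\sigma)$ lies in a uniform neighborhood of the union of these quasi-geodesics; consequently, every point of $S(\sigma)$ has distance $\geq d(*,B(\sigma)) + K - C_3$ from $*$. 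But $B(\sigma) \subset S(\sigma)$ realizes the distance $d(*,B(\sigma))$, which forces $K \leq C_3$. Hence some pair $\lambda,\lambda' \in P_\infty(\sigma)$ has $(\lambda|\lambda')_* \leq d(*,B(\sigma)) + C_3$, yielding $\rho(\lambda,\lambda') \geq c_1 a^{-d(*,B(\sigma)) - C_3}$.

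The main obstacle is the lower bound: it genuinely requires that $P_\infty(\sigma)$ be rich enough so that $S(\sigma)$ is recovered as its coarse convex hull, which is exactly Lemma \ref{3.4} together with the appendix density result (Corollary \ref{dense ending laminations}). Without this, $P_\infty(\sigma)$ could in principle be a single point (or even empty, which would make the lemma vacuous since presumably we assume $P_\infty(\sigma) \neq \emptyset$, given that $\sigma$ is large and obtained from $\tau$ by splits that preserve some filling lamination). The upper bound, by contrast, is a fairly direct application of quasi-convexity and Lemma \ref{B}.
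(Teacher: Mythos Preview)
Your argument is correct and is essentially the same as the paper's, which simply cites Lemmas \ref{3.4} and \ref{B}: the visual diameter of a set at infinity is coarsely $a^{-d}$ where $d$ is the distance from the basepoint to the convex hull, Lemma \ref{3.4} identifies that hull as $S(\sigma)$, and Lemma \ref{B} says the nearest point of $S(\sigma)$ to $*$ is $B(\sigma)$. You have unpacked this into separate upper and lower Gromov-product bounds, invoking Lemma \ref{3.3} and Theorem \ref{splitting} along the way for the upper bound, but these are already ingredients in the proofs of Lemmas \ref{3.4} and \ref{B}.
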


\begin{proof} This follows from Lemmas \ref{3.4} and \ref{B}.
\end{proof}

\begin{prop}
Let $\mathcal C_j$ be an excellent sequence of cell structures
obtained by splitting tracks. For all sufficiently
large constants $c$ (depending only on $\Sigma$) the following holds
for all sufficiently small $s>0$. Suppose for a certain $j$ the visual
diameter of each $P_\infty(\sigma)$ is $>s$, where $\sigma$ ranges
over all maximal train tracks such that $V(\sigma)\in \mathcal C_j$. Then
\begin{itemize}
\item The visual diameter of each $P_\infty(\sigma)$ is $>s/c$ for
  every large track $\sigma$ determining a cell $V(\sigma)$ in
  $\mathcal C_j$.
\item Suppose cells $V(\sigma)$ and $V(\sigma')$ in $\mathcal C_j$
  are distinct with both $\sigma,\sigma'$ large. Suppose
  $a\in P_\infty(\sigma)$, $b\in P_\infty(\sigma')$, and
  $\rho(a,b)<s/c^2$. Then there is a cell $V(\mu)\subset
  V(\sigma)\cap V(\sigma')$ and there is a $e\in P_\infty(\mu)$  
  so that $\rho(a,e)<s/c$, $\rho(b,e)<s/c$.
\end{itemize}
\end{prop}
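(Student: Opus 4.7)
The proof splits naturally into the two bullets, and I will handle them in turn. The first is a short consequence of Lemma \ref{visual size} and Proposition \ref{distance}; the second is the heart of the matter and will combine Proposition \ref{p:process} with a promotion of the resulting interpolating curve to an ending lamination.

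For the first bullet, given a large track $\sigma$ with $V(\sigma) \in \mathcal C_j$, I would use Proposition \ref{distance}(4) to find a top-dimensional cell $V(\sigma^+) \supseteq V(\sigma)$ with $d(B(\sigma), B(\sigma^+)) \leq C_0 = C_0(\Sigma)$. Since $\sigma^+$ is maximal, the hypothesis on $\mathcal C_j$ together with Lemma \ref{visual size} yields $d(*, B(\sigma^+)) \leq -\log_\alpha s + O(1)$, where $\alpha$ denotes the base of the visual metric; the triangle inequality then gives the same bound for $d(*, B(\sigma))$, and a second application of Lemma \ref{visual size} bounds the visual diameter of $P_\infty(\sigma)$ below by $\alpha^{-O(1)} \cdot s$, which exceeds $s/c$ once $c = c(\Sigma)$ is chosen large enough.

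For the second bullet, I plan to first translate the hypothesis into curve-complex language via $\rho(x,y) \asymp \alpha^{-(x|y)}$: the inequality $\rho(a,b) < s/c^2$ becomes $(a|b) \geq -\log_\alpha s + 2\log_\alpha c - O(1)$. Choose reparametrized quasi-geodesic rays $\gamma_a, \gamma_b$ from $* = B(\tau)$ to $a$ and $b$; by Lemmas \ref{3.3}, \ref{3.4}, \ref{B} and Theorem \ref{splitting} one may arrange that $\gamma_a$ (resp.\ $\gamma_b$) passes through $B(\sigma)$ (resp.\ $B(\sigma')$) and then continues inside $S(\sigma)$ (resp.\ $S(\sigma')$) via a splitting sequence. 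Let $a_t, b_t$ be the points at parameter $t := (a|b) - \delta'$, where $\delta'$ is a hyperbolicity/quasi-geodesic constant; then $\delta$-thinness of the triangle at $(*, a, b)$ gives $d(a_t, b_t) \leq C_1 = C_1(\Sigma)$, while choosing $c$ large enough ensures $a_t \in S(\sigma)$ and $b_t \in S(\sigma')$. I would now apply Proposition \ref{p:process} to $a_t, b_t$ with $A = \mathrm{Carr}_j(a_t)$, $B = \mathrm{Carr}_j(b_t)$ and basepoint bound $K = -\log_\alpha s + O(1)$ from the first bullet; the first alternative is excluded because $t > K + C'$ for $c$ large (where $C' = C'(\Sigma, C_1)$ is the constant supplied by Proposition \ref{p:process}), so the conclusion produces a simple closed curve $e_t$ in a cell $V(\mu) \subset V(\sigma) \cap V(\sigma')$ with $d(a_t, e_t), d(b_t, e_t) \leq C'$. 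Since $d(*, e_t) \geq t - C'$ can be made arbitrarily large by shrinking $s$, $S(\mu)$ is unbounded in $\mathcal C(\Sigma)$, which forces $\mu$ to be large (otherwise $\mu$ sits in a proper essential subsurface $Y$ and $S(\mu)$ has diameter at most $2$ via disjointness from $\partial Y$), so in particular $P_\infty(\mu) \neq \emptyset$.

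The last step is to promote the curve $e_t$ to an ending lamination $e \in P_\infty(\mu)$. Because $S(\mu)$ is quasi-convex in $\mathcal C(\Sigma)$ with Gromov boundary $P_\infty(\mu)$ (Lemma \ref{3.4}) and $e_t$ lies at distance $\geq t - K - O(1)$ from $B(\mu)$ inside $S(\mu)$, a standard thin-triangles argument (applied to quasi-geodesics inside $S(\mu)$ joining $e_t$ to a chosen boundary point) will yield an $e \in P_\infty(\mu)$ with $(e_t|e) \geq d(*, e_t) - O(1) \geq t - O(1)$. Combining with $(a|e_t) \geq t - O(1)$ (obtained by letting the parameter on $\gamma_a$ run to infinity in $(a_s|e_t)$) and $\delta$-hyperbolicity produces $(a|e) \geq t - O(1)$, whence $\rho(a,e) \lesssim \alpha^{-t} \lesssim \alpha^{O(1)} \cdot s/c^2 < s/c$ once $c$ is large, and symmetrically for $b$. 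The main obstacle is bookkeeping the additive Gromov-product errors (equivalently multiplicative visual-metric constants) accrued at each appeal to hyperbolicity, to Proposition \ref{p:process}, and to the promotion step; once one checks that every such constant depends only on $\Sigma$, all are absorbed by taking $c = c(\Sigma)$ large enough.
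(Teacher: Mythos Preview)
Your proof is correct and follows essentially the same route as the paper: the first bullet is deduced from Lemma \ref{visual size} and Proposition \ref{distance}, and for the second bullet you build quasi-geodesic rays from $*$ to $a,b$ passing through $B(\sigma),B(\sigma')$ and staying in $S(\sigma),S(\sigma')$, pick nearby curves $a_t,b_t$ at the fellow-traveling height, feed them into Proposition \ref{p:process} (ruling out the first alternative by largeness of $c$), and then promote the resulting curve $e_t$ to an ending lamination $e\in P_\infty(\mu)$ via Lemmas \ref{3.3} and \ref{3.4}. The only minor imprecision is that the hypothesis $K$ of Proposition \ref{p:process} requires a distance bound for \emph{all} cells, not just the large ones treated in the first bullet; this is immediate from Proposition \ref{distance}(4), which you in fact already invoked.
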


\begin{proof}
The first bullet follows from Lemma \ref{visual size} and Proposition
\ref{distance}.

To prove the second bullet we use Proposition
\ref{p:process}. Consider a quasi-geodesic ray from $*$ to $a$ that
passes through $B(\sigma)$ and between $B(\sigma)$ and $a$ stays in
$S(\sigma)$. By Lemmas \ref{3.3} and \ref{3.4} we may assume that these
rays are uniformly quasi-geodesic. Likewise, construct such a ray from $*$
to $b$. Also choose a uniform quasi-geodesic from $a$ to $b$. We
now
have a uniformly thin triangle with two vertices at infinity. Choose
$a'$ on the first ray and $b'$ on the second ray, just past the thick
part viewed from $*$. Thus $d(a',b')$ is uniformly bounded, say by
$C$. Also note that $d(*,a'),d(*,b')$ is a definite amount larger than
$d(*,B(\sigma)),d(*,B(\sigma'))$ by the assumption that
$\rho(a,b)<s/c$ with $c$ sufficiently large. In particular, $a'\in
S(\sigma)$, $b'\in S(\sigma')$. For this $C$ Proposition
\ref{p:process} provides a constant $C'$. Now the first bullet in the
conclusion of Proposition \ref{p:process} cannot hold if $c$ is
sufficiently large. Therefore, there is some $e'\in P(\mu)\in \mathcal
C_j$ with $d(a',e'),d(b',e')\leq C'$ and with $\dim V(\mu)<\dim
V(\sigma)=\dim V(\sigma')$. Again using Lemmas \ref{3.3} and \ref{3.4}
construct a uniform quasi-geodesic ray from $*$ through $e'$ to some
$e\in P_\infty(\mu)$. This $e$ satisfies the conclusions.
\end{proof}

\subsection{The cover}\label{the cover}

It is well known that $\mathcal {PML}$ can be covered by finitely many
sets of the form $P(\tau)$ for $\tau$ a large train track (for a
concrete cover see \cite{ph}). Thus $\mathcal {EL}$ is finitely
covered by sets of the form $P_\infty(\tau)$.
In view of Proposition \ref{vera} we
need to find an upper bound to $\capdim P_\infty(\tau)$.

Here we fix a large birecurrent train track $\tau$ and describe a
cover of $Z=P_\infty(\tau)$ that satisfies Proposition \ref{capdim}
for a certain $m$ depending on $\Sigma$ and for small $s>0$.

The dimensions of cones $V(\sigma)$ for large train tracks
$\sigma\subset\Sigma$ belong to a certain interval $[A,A+K]$ that
depends on $\Sigma$. We put $m=K$. We also fix a large constant $c$.

Now fix a small $s>0$ and start with the standard cell structure on
$V(\tau)$. This is $\mathcal C_0$. Now suppose
$\mathcal C_j$ has been constructed and the visual size of each
$P_\infty(\sigma)$ for a top dimensional cell $V(\sigma)\in\mathcal
C_j$ is $>s/c$. Enumerate all top dimensional cells
$V(\sigma)\in\mathcal C_j$ such that the visual size of
$P_\infty(\sigma)$ is $\geq s$ and also enumerate all large branches in
the corresponding tracks $\sigma$. Then construct $\mathcal
C_{j+1},\cdots,\mathcal C_k$ by splitting along these branches, in any
order. We call this collection of splits a {\it multisplit}.

This gives an infinite excellent sequence. Note that once some
$P_\infty(\sigma)$ (with $V(\sigma)$ maximal) reaches visual size $<s$ at
the end of a multisplit, it never gets subdivided again (see Lemma
\ref{3.8}). Coarsely, reaching a certain visual size is equivalent to
$B(\sigma)$ reaching a certain distance from the basepoint
$*$ (see Lemma \ref{visual size}).

\begin{lemma}
Let $\lambda$ be a filling lamination, and for every $j$ let
$E_j=V(\sigma_j)$ be the cell in $\mathcal C_j$ that contains
$\lambda$ in its interior. Then the sequence $E_j$ eventually
stabilizes.
\end{lemma}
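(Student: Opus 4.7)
Plan proposal: The plan is to argue that if $E_j$ never stabilizes then the tracks $\sigma_j=\theta_{E_j}$ form an infinite splitting sequence fully carrying $\lambda$, hence a full splitting sequence whose vertex cycle basepoints form a quasi-geodesic in $\mathcal C(\Sigma)$ converging to $\lambda$ at infinity; this forces $d(*,B(\sigma_j))\to\infty$ and hence the visual diameter of $P_\infty(\sigma_j)$ shrinks below any positive threshold, contradicting the construction which only subdivides cells of visual diameter at least $s$.

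First, since each $\mathcal C_{j+1}$ is obtained from $\mathcal C_j$ by a single subdivision, every cell of $\mathcal C_{j+1}$ is contained in a cell of $\mathcal C_j$, so $E_{j+1}\subseteq E_j$. In particular $\dim E_j$ is non-increasing and hence eventually constant at some $d\le n$; I pass to this tail. On this tail, whenever $E_{j+1}\subsetneq E_j$, Proposition \ref{distance}(3) gives $\sigma_{j+1}\sscm\sigma_j$, and since this corresponds to a single subdivision step the relation is a single move. Because $\lambda$ lies in the relative interior of both $V(\sigma_j)$ and $V(\sigma_{j+1})$ and these have the same dimension $d$, this single move cannot be a central split or a subtrack move (either would strictly reduce $\dim V$ when the containment is strict), so it must be a genuine split along a large branch of $\sigma_j$.

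Extracting the indices of strict decrease, I obtain an infinite splitting sequence of tracks each fully carrying $\lambda$. By Proposition \ref{splitting sequence} this is a full splitting sequence and every lamination carried by every track in it has underlying geodesic lamination $[\lambda]$; by Theorem \ref{splitting} the vertex cycle sets $B(\sigma_j)$ form a reparametrized quasi-geodesic in $\mathcal C(\Sigma)$. Since $\lambda$ is filling, $[\lambda]$ is an ending lamination, hence a boundary point of $\mathcal C(\Sigma)$ by Klarreich's theorem, and the quasi-geodesic $B(\sigma_j)$ must converge to $[\lambda]$ at infinity, so $d(*,B(\sigma_j))\to\infty$ and the visual diameter of $P_\infty(\sigma_j)$ tends to $0$ by Lemma \ref{visual size}.

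The final step is to derive the contradiction. Each strict decrease $E_{j+1}\subsetneq E_j$ is triggered by a split of some top-dimensional cell $V(\mu_j)\supseteq V(\sigma_j)$, and the multisplit construction performs this split only while the visual diameter of $P_\infty(\mu_j)$ is at least $s$. When $d=n$ one has $\mu_j=\sigma_j$ and the contradiction is immediate. The main obstacle is the case $d<n$: then $V(\mu_j)$ is strictly larger than $V(\sigma_j)$ and its visual diameter is not a priori controlled by that of $V(\sigma_j)$. Here I would use Proposition \ref{distance}(4) together with Lemma \ref{B} to show that the specific $\mu_j$ whose split actually cuts $V(\sigma_j)$ must satisfy $d(B(\mu_j),B(\sigma_j))\le C$ (because the cutting hyperplane is produced by a large branch that is stationary in $\sigma_j\sscm\mu_j$ and corresponds to a large branch of $\sigma_j$); then $d(*,B(\mu_j))\to\infty$ as well, so the visual diameter of $P_\infty(\mu_j)$ eventually drops below $s$, contradicting the assumption that it was being split.
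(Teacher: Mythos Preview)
Your overall strategy is on the right track and close to the paper's, but there are two real gaps.

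First, the assertion that the tracks in your extracted sequence ``each fully carry $\lambda$'' is not justified, and this is precisely the hypothesis you need to invoke Proposition~\ref{splitting sequence}. Having $\lambda$ in the relative interior of $V(\sigma_j)$ is not the same as $\sigma_j$ fully carrying $\lambda$: the latter requires a bijection of complementary components preserving sides and punctures, which can fail even for interior points (e.g.\ a maximal track does not fully carry a filling lamination with a hexagonal complementary region). The paper fills this gap with Lemma~\ref{split to full carry} from the appendix. Relatedly, your claim that $\sigma_{j+1}\sscm\sigma_j$ is a \emph{single} move is not obvious when $d<n$: the tracks $\theta_E$ for lower-dimensional cells are built via Proposition~\ref{ursula++} and the $\sscm$ relation between consecutive ones may involve many moves.

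Second, your contradiction in the case $d<n$ is more complicated than necessary and the justification is vague. You try to argue that the \emph{particular} top cell $\mu_j$ whose split cuts $E_j$ is close to $\sigma_j$, via a claim about stationary branches that is not clearly correct. The paper's route is a one-liner: by Proposition~\ref{distance}(4), \emph{every} cell $F\in\mathcal C_j$ lies in some top cell $E$ with $d(B(\theta_E),B(\theta_F))\le C$; since all top cells have visual diameter bounded below by construction, so do all cells, and in particular the $E_j$. This handles both $d=n$ and $d<n$ at once without analysing which top cell triggers the split. The paper also sidesteps your need to analyse the type of moves: it simply takes a $\mathcal{PML}$-subsequential limit $\lambda'$ of vertex cycles $a_j\in B(\sigma_j)$, gets $[\lambda']=[\lambda]$ from Lemma~\ref{split to full carry} together with Proposition~\ref{splitting sequence}, and then uses Kobayashi's argument to conclude $a_j\to\infty$ in $\mathcal C(\Sigma)$.
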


\begin{proof} We argue by contradiction.
From Proposition \ref{distance} we have that
$\sigma_{j+1}\sscm\sigma_j$.  Let $a_j$ be a vertex cycle of
$\sigma_j$. Then we may assume, perhaps after a subsequence, that
$a_j\to\lambda'$ in $\mathcal{PML}$ and the lamination $\lambda'$ is
necessarily carried by all $\sigma_j$. By Lemma \ref{split to full
  carry} for large $j$, $\sigma_j$ will fully carry $\lambda$, so by
Lemma \ref{splitting sequence} we have $[\lambda']=[\lambda]$. By an
argument of Kobyashi (see p. 124 of \cite{MM}) the sequence $a_j$ goes
to infinity in the curve complex, so the visual size of $E_j$ goes to
0 by Lemma \ref{visual size}. But in the construction of $\mathcal
C_j$ the visual diameter of all top dimensional cells is bounded below
and by Proposition \ref{distance} this bounds below the visual
diameter of all cells, giving a contradiction.
\end{proof}

We let the cover $\mathcal K$ consist of the sets of the form
$P_\infty(\sigma)$ such that $V(\sigma)$ is a stable cell. We
partition the sets in $\mathcal K$ according to the dimension of the
cell.

\begin{thm}
$$\capdim(EL)\leq K(\Sigma)$$ where $K=K(\Sigma)$ is the smallest
  integer such that every recurrent, transversely recurrent,
  large track $\sigma$ on $\Sigma$ has 
  $\dim P(\sigma)\in [A,A+K]$ for some $A=A(\Sigma)$.
\end{thm}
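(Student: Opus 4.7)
The plan is to assemble the machinery of the previous sections into a verification of the abstract criterion of Proposition \ref{capdim}. Since $\mathcal{PML}$ is covered by finitely many sets $P(\tau)$ with $\tau$ a large birecurrent train track, $\mathcal{EL}$ is the finite union of the subspaces $P_\infty(\tau)$, and Proposition \ref{vera} reduces the theorem to showing $\capdim P_\infty(\tau)\le K$ for a single fixed large $\tau$. So I would fix such a $\tau$ and a small $s>0$, and run the multisplit process of Section 4.3 to produce an infinite excellent sequence $\mathcal C_0,\mathcal C_1,\ldots$ of cell structures on $V(\tau)$ in which a top-dimensional cell is split precisely until its $P_\infty$-part has visual diameter at most $s$, and then never touched again.

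Using the stabilization lemma I would take $\mathcal K$ to be the family of sets $P_\infty(\sigma)$ for all stable cells $V(\sigma)$ with $\sigma$ large; since $P_\infty(\sigma)=\emptyset$ unless $\sigma$ is large, this is a cover of $P_\infty(\tau)$. I would then partition $\mathcal K$ by $\dim V(\sigma)$. By Lemma \ref{dimension count}, the dimension of $V(\sigma)$ for any large birecurrent $\sigma$ lies in the interval $[A,A+K]$, so there are at most $K+1$ classes $\mathcal K_1,\ldots,\mathcal K_{K+1}$, indexed so that smaller dimension corresponds to smaller index.

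The heart of the argument is verifying the three bullets of Proposition \ref{capdim} with $m=K$. The diameter bound comes directly from Lemma \ref{visual size} together with the stopping rule of the multisplit: a cell is declared stable only once its visual diameter drops below $s$. The essential closeness property is supplied by the last proposition of Section 4.2: if $V(\sigma),V(\sigma')$ are distinct stable cells lying in the same class $\mathcal K_i$ and $a\in P_\infty(\sigma)$, $b\in P_\infty(\sigma')$ are close in the visual metric, that proposition delivers a face $V(\mu)\subset V(\sigma)\cap V(\sigma')$ and a point $e\in P_\infty(\mu)$ close to both $a$ and $b$. Since $V(\sigma)\ne V(\sigma')$ but they share the same dimension, $V(\mu)$ is a \emph{proper} face, so $\dim V(\mu)<\dim V(\sigma)$ and $e$ lies in some $\mathcal K_k$ with $k<i$, exactly as required. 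Proposition \ref{capdim} then yields $\capdim P_\infty(\tau)\le K$, and Proposition \ref{vera} finishes the bound for $\mathcal{EL}$.

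The step I expect to be the main obstacle is matching the quantitative thresholds: Proposition \ref{capdim} demands the hypothesis at scale $s/c^{3i-1}$ and the conclusion at scale $s/c^{3i-2}$, whereas the Section 4.2 proposition is phrased with the fixed ratios $c^2$ and $c$. The remedy is to choose $c$ large enough (depending only on $\Sigma$) that a single application covers all indices $i\le K+1$, or, if necessary, to iterate the Section 4.2 proposition at successively refined scales along a descending chain of stable cells; in either case one must confirm that each intermediate $V(\mu)$ is itself a stable cell of $\mathcal K$. This last point is guaranteed by the fact that the multisplit only subdivides top-dimensional cells, so faces shared between two stable top cells are themselves stable and, via Proposition \ref{distance}, arise as $V(\theta)$ for birecurrent tracks that fit into our partition by dimension.
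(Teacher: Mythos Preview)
Your proposal is correct and takes essentially the same approach as the paper, which likewise reduces to a single $P_\infty(\tau)$ via Proposition~\ref{vera}, builds the cover $\mathcal K$ of stable cells partitioned by dimension, and verifies Proposition~\ref{capdim} using Propositions~\ref{distance} and~\ref{p:process} applied to $\mathcal C_j$ for large $j$. Your threshold worry resolves in the way you suggest: the argument behind the Section~4.2 proposition (via Proposition~\ref{p:process}) actually gives a multiplicative bound $\rho(a,e)\le A^{O(C')}\rho(a,b)$ in the visual metric (since the Gromov products $(a|e)$ and $(a|b)$ differ by a constant depending only on $\Sigma$), so taking $c$ larger than this fixed constant makes the third bullet of Proposition~\ref{capdim} hold at every index $i\le K+1$ simultaneously.
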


\begin{proof}
We only need to argue that the cover $\mathcal K$ satisfies the
conditions of Proposition \ref{capdim}. This is clear from the
construction and Propositions \ref{distance} and \ref{p:process}
applied to $\mathcal C_j$ for large $j$.
\end{proof}

\begin{cor}
$$\asdim(\mathcal C(\Sigma))\leq K(\Sigma)+1$$
\end{cor}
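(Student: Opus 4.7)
The plan is very short: this corollary should fall out by combining the theorem just proved with Buyalo's inequality $\asdim(X)\le \capdim(\partial X)+1$, stated in the excerpt for visual $\delta$-hyperbolic spaces with visual metric on the boundary.

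First I would set $X=\mathcal C(\Sigma)$, which is $\delta$-hyperbolic by Masur--Minsky. To apply Buyalo's inequality I need to know $X$ is visual in the sense of Bonk--Schramm recalled just before Lemma \ref{visual size}. This is immediate: the mapping class group acts coboundedly on $\mathcal C(\Sigma)$, and pseudo-Anosov elements provide biinfinite quasi-geodesic axes, so by the remark in the excerpt $\mathcal C(\Sigma)$ is visual. Next I invoke Klarreich's theorem (already cited) to identify $\partial \mathcal C(\Sigma)$ with $\mathcal{EL}$, equipped with a visual metric coming from a basepoint such as $B(\tau)$ used throughout Section 4.

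With $X=\mathcal C(\Sigma)$ visual and $\delta$-hyperbolic and $\partial X=\mathcal{EL}$ with its visual metric, Buyalo's theorem gives
\[
\asdim(\mathcal C(\Sigma)) \le \capdim(\mathcal{EL}) + 1.
\]
The theorem immediately preceding the corollary states $\capdim(\mathcal{EL}) \le K(\Sigma)$, so substituting yields the desired bound $\asdim(\mathcal C(\Sigma)) \le K(\Sigma)+1$.

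There is no real obstacle here; the only thing one might double-check is that the visual metric used in the statement of the Main Theorem (and hence in Section 4 when building the cover for $\capdim$) is the same class of visual metric that appears in Buyalo's theorem. This is fine because capacity dimension depends only on the quasi-symmetry class of the metric and all visual metrics on the boundary of a hyperbolic space lie in a single such class. So the proof reduces to one line: combine the preceding theorem with Buyalo's inequality.
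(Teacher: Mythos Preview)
Your proposal is correct and matches the paper's intended argument exactly: the paper does not even write out a proof for this corollary, since it follows immediately by combining the preceding theorem $\capdim(\mathcal{EL})\le K(\Sigma)$ with Buyalo's inequality $\asdim(X)\le\capdim(\partial X)+1$, using that $\mathcal C(\Sigma)$ is visual and Klarreich's identification $\partial\mathcal C(\Sigma)=\mathcal{EL}$.
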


\begin{example}
One can see easily what happens in the case of the punctured
torus. Then $\mathcal{FPML}=\mathcal{EL}$ is homeomorphic to the set
of irrational numbers, or equivalently to $\Z^{\infty}$. The visual
metric is complete, and the cover $\mathcal K$ constructed above will
be infinite and will consist of pairwise disjoint sets, all of the
same index, and all of comparable sizes. For example, consider a
standard track that supports laminations whose slope is in the
interval $[1,\infty]$. Splitting produces two tracks, one carrying
laminations in the interval $[1,2]$ and the other in the interval
$[2,\infty]$. We can take the curve with slope $\infty$ as the
basepoint and agree to stop subdividing when the distance from
$\infty$ to $B(\sigma)$ is $>0$, i.e. when $\infty$ is no longer
carried by $\sigma$. Thus we stop splitting the track carrying
$[1,2]$ and we split the other track. We get tracks carrying $[2,3]$
and $[3,\infty]$. Continuing in this way we get an infinite cover
$P_\infty(\sigma_n)$ of $\mathcal{EL}$ where $\sigma_n$ carries ending
laminations with slope in $[n,n+1]$.
\end{example}

\begin{remark}
There are two other closely related notions to asymptotic
dimension. In the {\it linearly controlled} asymptotic dimension, or
the {\it asymptotic Assouad-Nagata} dimension ${\rm \ell-asdim}$ one
insists on the linear control on the size of the cover.
Also, say that $eco-dim(X)= n$ if $X$
quasi-isometrically embeds in a product of $n$ trees and $n$ is
smallest possible. Then there is a chain of inequalities
$$\asdim\leq {\rm \ell-asdim}\leq {\rm eco-dim}$$
for any metric space, and Buyalo shows in \cite{buyalo2} that when $X$
is $\delta$-hyperbolic then
$$eco-dim(X)\leq \capdim\partial X+1$$
See also the discussion in \cite{mac-sisto}. Therefore our arguments
also give the same bound on $l-asdim$ and $eco-dim$ for $\mathcal
C(\Sigma)$. Previously, Hume observed that $eco-dim(\mathcal
C(\Sigma))<\infty$ \cite{hume}.
\end{remark}

\begin{appendices}
\section{Train tracks and full dimension paths}\label{splitting paths}

A {\em splitting path} is a legal embedded path in a thickened train
track that begins and ends at a cusp.
See Figure \ref{splitting path}.

\begin{figure}
  \begin{center}
\includegraphics[scale=0.4]{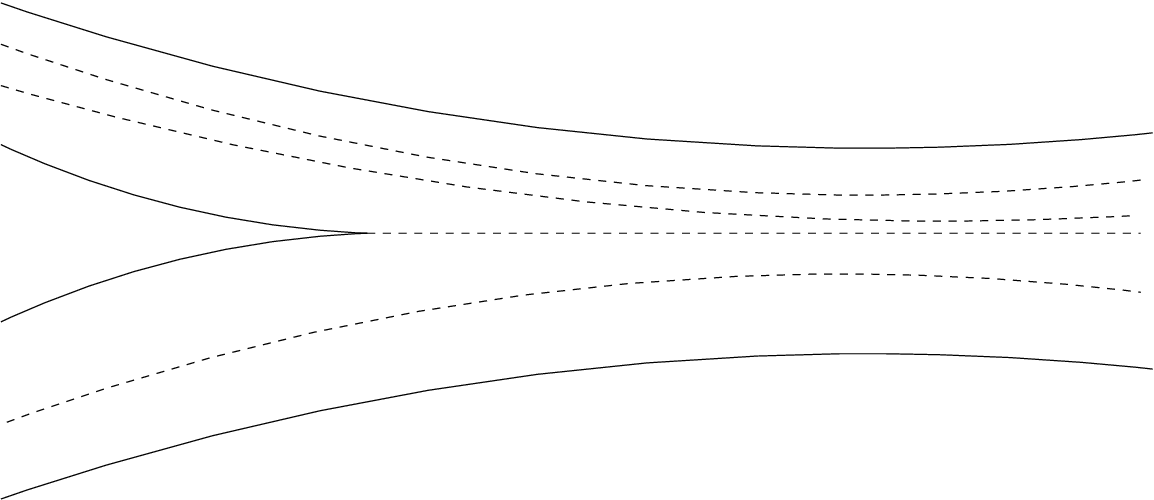}
\caption{The dashed line is the splitting path. It must start and end at a cusp in the thickened track.}
\label{splitting path}
\end{center}
\end{figure}

If $\tau$ is a recurrent train track and
$\theta$ is the central split along the splitting path then $\theta$
will have one or two connected components and a total of three less
branches and two less switches then $\tau$. By Lemma \ref{dimension
  count} we then have:
\begin{lemma}\label{fd splits}
Either $\dim V(\theta) < \dim V(\tau)$ or $\dim V(\theta) = \dim
V(\tau)$. If $\dim V(\theta) = \dim
V(\tau)$ then one of the following holds.
\begin{enumerate}[(1)]
\item $\tau$ is non-orientable and $\theta$ is connected and orientable,

\item $\tau$ is orientable and $\theta$ has two components (both necessarily orientable) or

\item $\tau$ is non-orientable and $\theta$ has one orientable
  component and one non-orientable component.
\end{enumerate}
\end{lemma}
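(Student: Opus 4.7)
The plan is to deduce everything from a direct dimension count using Lemma~\ref{dimension count}, together with two elementary observations about central splits. Let $b$ be the number of branches of $\tau$, so $\theta$ has $b-3$ branches. Write $\theta = \theta_1 \sqcup \cdots \sqcup \theta_k$ with $k \in \{1,2\}$, and let $b_i$ denote the number of branches of $\theta_i$, so $b_1 + \cdots + b_k = b-3$. Applying Lemma~\ref{dimension count} componentwise (the switch equations decouple across components),
$$\dim V(\theta) \;=\; \sum_{i=1}^{k} \dim V(\theta_i) \;=\; \frac{b-3}{3} + r,$$
where $r$ is the number of orientable components of $\theta$. Writing $\epsilon_\tau \in \{0,1\}$ for the orientability indicator of $\tau$, we have $\dim V(\tau) = b/3 + \epsilon_\tau$, and hence
$$\dim V(\tau) - \dim V(\theta) \;=\; 1 + \epsilon_\tau - r.$$

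Next I would record two constraints on $r$. First, $V(\theta) \subseteq V(\tau)$, because any lamination carried by the central split is a fortiori carried by $\tau$; therefore the displayed difference is nonnegative, which forces $r \leq 1 + \epsilon_\tau$. Second, if $\tau$ is orientable then pulling back an orientation along the carrying map $\theta \to \tau$ orients every branch of $\theta$ and the orientations are consistent at the switches of $\theta$ (since the map is smooth there); consequently each $\theta_i$ is orientable, and $r = k$ in this case.

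With these constraints in hand I would simply enumerate. If $\tau$ is orientable, then $r = k$ and the drop equals $2 - k$: when $k = 1$ the drop is $1$, and when $k = 2$ the drop is $0$ with both components automatically orientable, which is case (2). If $\tau$ is non-orientable, then $r \leq 1$ and the drop equals $1 - r \in \{0,1\}$; the drop vanishes precisely when $r = 1$, which happens either with $k = 1$ and $\theta$ orientable (case (1)) or with $k = 2$ and exactly one orientable component (case (3)). In all remaining configurations the drop is exactly $1$, completing the dichotomy stated in the lemma.

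I do not foresee any serious obstacle: the entire argument is a finite bookkeeping exercise once Lemma~\ref{dimension count} is available. The only minor subtleties are extending that lemma to a possibly disconnected $\theta$ (immediate, since the switch equations split componentwise) and ruling out $r = 2$ when $\tau$ is non-orientable, which is precisely what the inclusion $V(\theta)\subseteq V(\tau)$ supplies.
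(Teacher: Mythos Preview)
Your proof is correct and follows exactly the approach the paper intends: the paper's entire argument is the sentence ``By Lemma~\ref{dimension count} we then have,'' and your proposal is a careful unpacking of that dimension count. Your two auxiliary observations---that $V(\theta)\subseteq V(\tau)$ forces $r\le 1+\epsilon_\tau$, and that orientability of $\tau$ pulls back to each component of $\theta$---are precisely what is needed to make the enumeration go through, and the paper leaves them implicit.
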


If $\dim V(\theta) = \dim V(\tau)$ we say that the splitting path is a
{\em full dimension splitting path} or fd-path. While a splitting path
will be embedded in the thickened train track, in the actual train
track it may cross a single branch multiple times. However, an fd-path
can cross any branch at most twice and this strong restriction implies
that there is a uniform bound on the number of fd-paths in a given
track.
\begin{lemma}\label{two-to-one}
An fd-path of types (1) or (2) crosses each branch of $\tau$ at most once and an fd-path of type (3) crosses each branch at most twice.
\end{lemma}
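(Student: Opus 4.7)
Proof proposal:

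My plan is to analyze the cut surface $N(\tau) \setminus P$ for an fd-path $P$ and apply Lemma~\ref{fd splits} to constrain the multiplicities $n_b$ with which $P$ crosses each branch $b$.

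I would first set up the local picture: $P \cap N(b)$ consists of $n_b$ parallel horizontal arcs cutting $N(b)$ into $n_b+1$ strips. When $n_b \ge 2$, each of the $n_b - 1$ inner strips is bounded above and below by arcs of $P$, and at the endpoint switches $v_\ell, v_r$ of $b$, the arcs of $P$ exiting $b$ into the other two half-branches at the switch either bound a small ``cusp trigon'' region (bounded by two consecutive arcs of $P$ and the cusp arc of $\partial N(\tau)$) or extend into the middle strip of an adjacent multiply-crossed branch. Since $P$ is a single embedded arc, $N(\tau) \setminus P$ has at most two components, with $\chi(N(\tau) \setminus P) = \chi(\tau) + 1 = \chi(\theta)$. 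Tracing the middle regions through switches, each inner strip lies in a bounded subregion of $N(\tau) \setminus P$ whose boundary is contained in $P \cup \partial N(\tau)$, hence is a component of $N(\tau) \setminus P$.

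Applying Lemma~\ref{fd splits} case by case: in type (1), $\theta$ is connected, so no such bounded subregion can be split off and $n_b \le 1$ for every $b$. In type (3), exactly one such subregion is allowed (becoming the non-orientable component of $\theta$, e.g.\ a M\"obius-band-like region), so $n_b \le 2$; producing two or more inner regions (either with $n_b \ge 3$ on one branch or with $n_b = 2$ on branches whose inner strips do not chain together) would force more than two components. In type (2), $\tau$ is orientable so any inner subregion would also be orientable; a careful dimension count using Lemma~\ref{dimension count} then shows that the only possible topology for such a subregion is a disk, which cannot be the thickened neighborhood of a non-empty train track, forcing $n_b \le 1$.

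The main obstacle will be the precise argument in case (2): one must rule out the possibility that an inner subregion closes up as an annulus (whose core would be a simple closed curve component of $\theta$) while still satisfying the fd equality $\dim V(\theta) = \dim V(\tau)$. This requires careful bookkeeping of the combinatorial structure of $P$ at each switch along the chain of multiply-crossed branches, and in particular analyzing the constraints that embeddedness and the endpoint conditions on $P$ impose on closed chain configurations when $\tau$ is orientable.
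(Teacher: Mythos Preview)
Your topological component-counting approach is genuinely different from the paper's orientation argument, but it has a real gap at its central step. The claim that the inner strip between two consecutive strands of $P$ in a branch $b$ traces out its own component of $N(\tau)\setminus P$ is not correct as stated. When the two bounding arcs diverge at a switch into different half-branches, the region between them does \emph{not} close off as a ``cusp trigon'': it continues into both adjacent half-branches as strips bounded on one side by $P$ and on the other by the cusp arc in $\partial N(\tau)$, and from there may merge with what you are calling the outer regions. More to the point, whether the inner strip lies in a proper component or in all of $N(\tau)\setminus P$ depends on the \emph{orientations} of the two strands as subarcs of $P$: if they have the same orientation then the strip is adjacent to both sides $P_L$ and $P_R$ of $P$, and since each of $P_L,P_R$ is a connected arc lying on the boundary of a single component, this forces $\theta$ to be connected. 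So in exactly the situation of type~(1), multiply-crossed branches with same-orientation strands do \emph{not} produce an extra component, and your argument ``$\theta$ connected $\Rightarrow$ no inner component $\Rightarrow n_b\le 1$'' breaks down.

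The paper's proof runs on this orientation dichotomy directly: orient $P$ and look at two consecutive strands in $b$. If they have the same orientation, $\theta$ is connected (forcing type~(1)), and one then checks that a consistent orientation of $\theta$ would have to agree with $P$ on one side and disagree on the other, which is impossible when two parallel same-direction strands sit in the same branch---so this configuration cannot occur. If they have opposite orientations, the region between them lies entirely on one side of $P$ and the corresponding component of $\theta$ is non-orientable, forcing type~(3); three or more crossings would make both components non-orientable, a contradiction. Type~(2) is handled automatically: since $\tau$ is orientable, any legal path is coherently oriented, so repeated strands necessarily have the same orientation, putting us in the type~(1) branch of the dichotomy, contradiction. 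This also dissolves the obstacle you flag in case~(2); no separate dimension or annulus analysis is needed.
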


\begin{proof}
Orient the splitting path. If the path crosses a branch more than
once, we examine two consecutive strands of the path in the branch, as
seen crossing the branch transversally.
\begin{itemize}
\item If the two strands have the same orientation then $\theta$ will
  be connected and we must be in case (1). Then $\tau$ will be
  non-orientable and $\theta$ will be orientable so we can choose an
  orientation for $\theta$. On opposite sides of the splitting path
  the orientation of $\theta$ cannot agree or $\tau$ would be
  orientable. Using the orientation of the splitting path (and the
  surface) we can assume that to the right of the splitting path the
  orientation of $\theta$ and the splitting path agree while to the
  left they are opposite of each other. However, this is not possible
  if there are two consecutive strands in the same branch with the
  same orientation. Therefore we can never have consecutive branches
  with the same orientation.

\item If consecutive branches (seen transversally) in the same branch have opposite
  orientation then the component of $\theta$ between the two strands
  will be non-orientable and we must be in case (3). If the splitting
  path crosses the same branch three or more times then orientation of
  consecutive branches will always be opposite so both components of
  $\theta$ will be non-orientable, a contradiction.
\end{itemize}
\end{proof}

\begin{lemma}\label{fd-paths lift}
Let $\tau_1$ and $\tau_2$ be recurrent train tracks with
$\tau_2\sscm\tau_1$ and $\dim V(\tau_1) = \dim V(\tau_2)$. Let
$\theta_2$ be the central split of $\tau_2$ along a splitting
path. Then there is a splitting path in $\tau_1$ with central split
$\theta_1$ such that $V(\theta_2) = V(\theta_1) \cap V(\tau_2)$.
\end{lemma}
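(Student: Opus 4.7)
The plan is to proceed by induction on the number of moves in the sequence $\tau_2\sscm\tau_1$. Since $\dim V(\tau_1)=\dim V(\tau_2)$ and a subtrack move drops dimension by one, every move in the sequence must be a split or central split that preserves dimension (i.e.\ a regular split, or a central split of type (S3)). The base case is a single such move.

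For a single move, suppose first it is a regular split of $\tau_1$ along a large branch $b$. Then $\tau_1$ and $\tau_2$ agree outside a neighborhood of $b$; inside, $\tau_1$ has one large branch while $\tau_2$ has two branches joined by a diagonal. The fd-path $p$ in $\tau_2$ either misses the diagonal or crosses it. Using the carrying map $\tau_2 \twoheadrightarrow \tau_1$ one defines $\tilde p$ in $\tau_1$ by keeping $p$ unchanged outside the split region and replacing each transit through the diagonal of $\tau_2$ by a transit through $b$ in $\tau_1$. I will verify that $\tilde p$ is legal (because carrying maps preserve legality), is embedded in the thickened $\tau_1$ (using Lemma \ref{two-to-one} to control how often $p$ can revisit branches adjacent to $b$), and starts and ends at switches. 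Then I claim that cutting $\tau_1$ along $\tilde p$ gives a train track $\theta_1$ with $V(\theta_2)=V(\theta_1)\cap V(\tau_2)$: the central split of $\tau_1$ along $\tilde p$ and the regular split of $\tau_1$ along $b$ involve disjoint sets of active branches (after factoring through Corollary \ref{commuting moves}), and so they commute; composing them in either order realizes the central split of $\tau_2$ along $p$, which identifies the intersection of the two cells with $V(\theta_2)$. The (S3) central-split case is easier: since $V(\tau_1)=V(\tau_2)$, the path $p$ can be transported back into $\tau_1$'s thickening literally, and the same lamination-level identification gives the result.

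For the inductive step, factor $\tau_2\sscm\tau_1$ as $\tau_2\sscm\tau'$ (one move) followed by $\tau'\sscm\tau_1$. Apply the base case to lift the fd-path $p$ in $\tau_2$ to an fd-path $p'$ in $\tau'$ with central split $\theta'$ satisfying $V(\theta_2)=V(\theta')\cap V(\tau_2)$, then apply the inductive hypothesis to lift $p'$ to an fd-path $\tilde p$ in $\tau_1$ with central split $\theta_1$ satisfying $V(\theta')=V(\theta_1)\cap V(\tau')$. Combining gives $V(\theta_2)=V(\theta_1)\cap V(\tau_1)\cap V(\tau_2)=V(\theta_1)\cap V(\tau_2)$, the last equality because $V(\tau_2)\subset V(\tau_1)$.

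The main obstacle is the single-move step: we must check that the lift $\tilde p$ is embedded in the thickened $\tau_1$ even when $p$ runs over the diagonal branch several times, and that the resulting $\theta_1$ still has $\dim V(\theta_1)=\dim V(\tau_1)$ (so that $\tilde p$ is genuinely an fd-path rather than a generic splitting path). The first point reduces to the multiplicity bounds of Lemma \ref{two-to-one}, and the second follows from a dimension count using Lemma \ref{dimension count} applied to $\theta_2$ and the local modification near $b$: the only way a fd-path of $\tau_2$ could lift to a non-fd path of $\tau_1$ would force an orientability mismatch between $\theta_2$ and the lamination cone, contradicting $\dim V(\theta_2)=\dim V(\tau_2)=\dim V(\tau_1)$.
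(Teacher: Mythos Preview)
Your overall strategy---reduce to a single move by induction and push the splitting path forward along the carrying map $\tau_2\to\tau_1$---matches the paper's, but your single-move step contains a real error and some unnecessary complications.

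First, the lemma is stated for an arbitrary splitting path, not an fd-path; the paper's proof never invokes Lemma~\ref{two-to-one} or any fd-specific fact, and you do not need to either. For a single split, the thickened $\tau_2$ sits inside the thickened $\tau_1$ (a split is literally cutting along a short arc), so an embedded legal path in thickened $\tau_2$ maps to an embedded legal path in thickened $\tau_1$ automatically. No multiplicity bound is required. The only adjustment needed is at the \emph{endpoints}: if an endpoint of $p$ lies at a switch that is active in the move, the image may fail to terminate at a switch of $\tau_1$, and one extends the image along the fold to reach the nearby switch. You do not mention this extension step, while the paper does.

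Second, and more seriously, your argument for the identity $V(\theta_2)=V(\theta_1)\cap V(\tau_2)$ via ``disjoint active branches'' and Corollary~\ref{commuting moves} fails precisely in the case you highlight. If $p$ crosses the diagonal of $\tau_2$, then the lift $\tilde p$ crosses $b$ in $\tau_1$, and the central split of $\tau_1$ along $\tilde p$ and the split of $\tau_1$ along $b$ share the active branch $b$; they do not commute in the sense of Corollary~\ref{commuting moves}. The paper avoids this by a direct geometric observation: if the path crosses $b$ then cutting $\tau_1$ open along $\tilde p$ already performs the split of $b$, so in fact $\theta_1=\theta_2$ and the intersection formula is trivial. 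If the path misses $b$, then $b$ remains a large branch of $\theta_1$ and $\theta_2$ is the split of $\theta_1$ along $b$, which again gives the formula immediately. Replacing your commuting-moves paragraph with this two-case identification fixes the gap. Finally, there is no need to verify that $\tilde p$ is again an fd-path or that $\dim V(\theta_1)=\dim V(\tau_1)$: the lemma only asks for a splitting path, and the intersection formula is what is used downstream.
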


\begin{proof}
As usual via induction we can reduce this to the case when
$\tau_2\sscm\tau_1$ is a single move. In $\tau_2$ the splitting path
starts and ends at large half branches. If these large half branches
and their adjacent half branches are not active in the move then the
composition of the splitting path with the carrying map is a splitting
path. If not the carrying map is a ``fold'' along the switch and, in
$\tau_1$, we extend the path along the fold.

If the move is a right or left split along a large branch $b$ in
$\tau_1$ then $\theta_1 = \theta_2$ if the path crosses $b$ in
$\tau_2$. If not that $\theta_2$ is a split of $\theta_1$ along
$b$. If the move is a central split then $\theta_2$ is a central split
of $\theta_1$. \end{proof}

One consequence of the existence of fd-paths is that there are large train tracks that do not fully carry any lamination. We will show that this is the only obstruction.

\begin{lemma}\label{split to full carry}
Let $\tau$ be a recurrent train track and let $\lambda$ be a lamination in the interior of $V(\tau)$.
Then there exists a recurrent train track $\sigma$ with $\sigma\scm\tau$ and $\lambda$ is fully carried by $\sigma$.
\end{lemma}

\begin{proof}
By \cite[Proposition 8.9.2]{Thurston:book:GTTM} there exists a recurrent (in fact
birecurrent) train track $\tau'$ that fully carries $\lambda$. Note
that while fully carrying is not discussed in this proposition, one
sees that if the $\epsilon$ in the construction is chosen to be
sufficiently small then the track will be fully carrying. Then, as in Proposition \ref{splitting sequence}, we use  \cite[Theorem 2.3.1]{ph} to find a train track $\sigma$ that carries $\lambda$ with $\sigma\scm\tau'$ and $\sigma\scm\tau$. As $\tau'$ fully carries $\lambda$ so must $\sigma$. 
\end{proof}

Observe that if $\sigma\sscm\tau$ and $\dim V(\sigma) < \dim V(\tau)$ then there will be a hyperplane $P$ defined by equations that have rational coefficients and such that $P$ has positive co-dimension and $V(\sigma) \subset P\cap V(\tau)$.

\begin{prop}\label{ending laminations}
Let $\tau$ be a recurrent large train track and assume that $\lambda
\in V(\tau)$ is not contained in a rational hyperplane of positive
co-dimension. Then either $\tau$ contains an fd-path with central
split $\theta$ and $\lambda \in V(\theta)$ or $\lambda$ is an ending
lamination.
\end{prop}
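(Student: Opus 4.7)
The plan is to prove the contrapositive: assuming $\tau$ contains no fd-path whose central split carries $\lambda$, I show $[\lambda]$ is filling, hence $\lambda$ is an ending lamination.

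First, the genericity hypothesis on $\lambda$ places us in the setting of Lemma \ref{split to full carry}. If $\sigma \sscm \tau$ and $\lambda \in V(\sigma)$, then $V(\sigma)$ and its proper faces are cut out in the ambient linear space by switch equations with integer coefficients, so $\dim V(\sigma) < \dim V(\tau)$ or $\lambda \in \partial V(\sigma)$ would force $\lambda$ onto a rational hyperplane of positive codimension in $V(\tau)$, contradicting the hypothesis. Hence every such $V(\sigma)$ is a codimension-zero polytope with $\lambda$ in its relative interior. Now pick a maximal non-constant splitting sequence $\tau=\tau_1,\tau_2,\ldots$ with each $\tau_{i+1}\sscm\tau_i$ a single move and $\lambda\in V(\tau_i)$ for all $i$ (always possible by splitting at a large branch and following $\lambda$). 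By Lemma \ref{split to full carry}, $\lambda$ is fully carried by some $\tau'=\tau_N$.

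Next, I claim every move $\tau_{i+1}\sscm\tau_i$ is a left or right split. By the previous step no move can pass to a codimension-one subtrack, so each falls under one of cases (S1), (S2), (S3). A case (S3) move is the central split of a single large branch $b$ with $\tau_i$ non-orientable, $\tau_{i+1}$ orientable, and $V(\tau_{i+1})=V(\tau_i)$; by Lemma \ref{fd splits}(1), the branch $b$ is precisely an fd-path of $\tau_i$ whose central split $\tau_{i+1}$ carries $\lambda$. Applying Lemma \ref{fd-paths lift} to the dimension-preserving chain $\tau_i\sscm\tau$ would then yield an fd-path of $\tau$ with central split carrying $\lambda$, contradicting our standing assumption. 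Hence every move is an ordinary left or right split, and since such splits preserve largeness of birecurrent train tracks (standard, see \cite{ph}), the track $\tau'$ is large.

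Finally, because $\tau'$ fully carries $\lambda$ in the sense of the paper, the carrying map $[\lambda]\to\tau'$ is a homotopy equivalence inducing a bijection between complementary components that preserves side-counts and puncture-counts. Since all complementary components of the large track $\tau'$ are disks or once-punctured disks, the same holds for $[\lambda]$, so $[\lambda]$ is filling, i.e., $\lambda$ represents an ending lamination. The most delicate point is the identification, in the middle step, of a case (S3) move with a single-branch fd-path and its lift back to $\tau$; both drop out cleanly from Lemmas \ref{fd splits}(1) and \ref{fd-paths lift} once we note that a single large branch between two switches is a legal embedded path.
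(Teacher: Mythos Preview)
Your proof is correct and follows essentially the same approach as the paper's: both invoke Lemma \ref{split to full carry} to obtain a track fully carrying $\lambda$, observe that any dimension-preserving central split along the way is an fd-path on a single large branch, and then lift that fd-path back to $\tau$ via Lemma \ref{fd-paths lift}. The only cosmetic difference is that you organize the argument as a contrapositive and conclude from $\tau'$ being large, whereas the paper argues directly that $\tau$ fully carries $\lambda$; these are equivalent.
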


\begin{proof}
By Lemma \ref{split to full carry} we can find a recurrent train $\sigma$ such that $\sigma\scm\tau$ and $\sigma$ fully carries $\lambda$.
If
$\tau$ fully carries $\sigma$ then it also fully carries $\lambda$ and
$\lambda$ must be an ending lamination. If not a central split must
occur in the sequence $\sigma\scm\tau$. Let $\tau_1$ be the track in
the sequence that occurs just before the first central split and let $b$ be the large branch where the central split occurs. Then $\dim V(\sigma) = \dim V(\tau)$ for if not $\lambda$ we be contained in a rational hyperplane of positive co-dimension. Therefore the large branch $b$ is an fd-path of length one. Let 
$\theta_1$ the central split of $\tau_1$ along the fd-path $b$. By Lemma
\ref{fd-paths lift} there exists an fd-path in $\tau$ with central
split $\theta$ such that $V(\theta_1) = V(\tau) \cap V(\theta)$ so
$\lambda \in V(\theta)$.  \end{proof}

To summarize, in the presence of fd-paths it is generally not true
that for a large track $\sigma$ generic points of $V(\sigma)$ (those in the
complement of rational hyperplanes) represent ending laminations, but
this will be true in the complement of a finite collection of subcells
of $V(\sigma)$. The subcells could cover $V(\sigma)$, but for example
if $V(\sigma)$ contains one ending lamination, it contains infinitely
many. 

\begin{cor}\label{dense ending laminations}
Let $\tau$ be a birecurrent train track and $a \in S(\tau)$ a simple
closed curve. There exists a $C = C(\Sigma)$ such that either
$d(B(\tau), a) \le C$ or there exists a sequence $\lambda_i \in
P_\infty(\tau)$ such that $a$ is contained in the Hausdorff limit of
the $[\lambda_i]$.
\end{cor}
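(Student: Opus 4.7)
The plan is to reduce $\tau$ iteratively via central splits, then either obtain a distance bound in a non-large case or perturb $a$ by a filling lamination using Proposition \ref{ending laminations}. The sequence $\lambda_i$ will have the form $a+\varepsilon_i\mu$; as $\varepsilon_i\to 0$, the transverse measure of $\lambda_i$ concentrates on every transversal to $a$, which forces the geodesic supports $[\lambda_i]$ to Hausdorff-accumulate on $a$.

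If $\tau$ is not large, some complementary region of $\tau$ in $\Sigma$ is neither a disk nor a punctured disk and hence contains an essential simple closed curve $c$ disjoint from $\tau$. Every element of $S(\tau)$ is disjoint from $c$, so $d(B(\tau),a)\le 2$ in $\mathcal C(\Sigma)$, giving the first alternative.

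For $\tau$ large, I would iterate the following reduction: as long as there is an fd-path of $\tau$ whose central split $\theta$ satisfies either $V(\theta)=V(\tau)$ (which by Lemma \ref{orientation} is exactly the non-orientable-to-orientable case) or contains $a$ in the relative interior of $V(\theta)$, replace $\tau$ by $\theta$. Each step preserves birecurrence, keeps $a$ carried, and by Lemma \ref{3.8} keeps $B(\tau)$ within uniformly bounded distance; the branch count strictly drops by three, so the process terminates at a birecurrent track $\theta^*$ with $d(B(\tau),B(\theta^*))\le C_0(\Sigma)$, carrying $a$, and such that no fd-path central split of $\theta^*$ has its cell either equal to $V(\theta^*)$ or containing $a$ in its relative interior. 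If $\theta^*$ is not large, the non-large argument applied to $\theta^*$ yields $d(B(\theta^*),a)\le 2$, hence $d(B(\tau),a)\le C_0(\Sigma)+2$, the first alternative.

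If $\theta^*$ is large, choose $\mu$ in the relative interior of $V(\theta^*)$ not collinear with $a$, and set $\lambda_i:=a+\varepsilon_i\mu$ with $\varepsilon_i\downarrow 0$ chosen generically so that $\lambda_i$ is irrational and escapes every $V(\theta')$ for an fd-path central split $\theta'$ of $\theta^*$. This is possible precisely because $a$ lies on the boundary (not the relative interior) of every such $V(\theta')$: for $\mu$ generic the ray $a+\varepsilon\mu$ leaves each $V(\theta')$ for all small $\varepsilon>0$. Proposition \ref{ending laminations} then ensures $\lambda_i$ is filling, so $\lambda_i\in P_\infty(\theta^*)=P_\infty(\tau)$. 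Since $\lambda_i\to a$ in $\mathcal{ML}$, the transverse measure on any transversal to $a$ converges weakly to the point mass of $a$, so every point of $a$ is an accumulation point of $[\lambda_i]$; passing to a Hausdorff-convergent subsequence gives a geodesic lamination limit containing $a$, which is the second alternative.

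The main technical obstacle is setting up the reduction so that $a$ lies on the boundary, rather than in the relative interior, of every fd-path cell of $\theta^*$. Without this property, perturbations of $a$ might remain trapped inside some $V(\theta')$ and fail to qualify for the filling conclusion of Proposition \ref{ending laminations}; the iterative replacement step is precisely designed to avoid this.
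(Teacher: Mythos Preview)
There is a genuine gap in the perturbation step. Your termination condition guarantees only that $a$ is not in the \emph{relative interior} of any fd-path cell $V(\theta')$ of $\theta^*$; it does not prevent $a$ from lying in $\partial V(\theta')$. But each $V(\theta')$ is a full-dimensional subpolytope of $V(\theta^*)$ (that is what ``fd'' means), so if $a\in\partial V(\theta')$ then the tangent cone of $V(\theta')$ at $a$ is a half-space of full dimension, and for an open set of directions $\mu$ in the interior of $V(\theta^*)$ the ray $a+\varepsilon\mu$ \emph{enters} $V(\theta')$ rather than leaving it. Your sentence ``for $\mu$ generic the ray $a+\varepsilon\mu$ leaves each $V(\theta')$ for all small $\varepsilon>0$'' is therefore unjustified: with several such $\theta'$ the ``exiting'' half-spaces can have empty common intersection, i.e.\ the union $\bigcup V(\theta')$ may contain a full neighbourhood of $a$ in $V(\theta^*)$ even though $a$ is in no individual interior. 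In that situation every perturbation $a+\varepsilon_i\mu$ lands in some $V(\theta')$, and Proposition~\ref{ending laminations} gives no information.

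The paper avoids this by recursing whenever $a\in V(\theta_i)$ (not merely when $a$ is in the interior), and then uses the orientability dichotomy of Lemma~\ref{fd splits} to bound the recursion depth at two: the first fd-split produces a connected orientable track, and any further fd-split of an orientable track is disconnected, hence small. Your argument can be repaired along the same lines by strengthening the reduction rule to ``replace $\tau$ by $\theta$ whenever $a\in V(\theta)$'': termination still follows from the branch-count drop, and at the terminal track $\theta^*$ one genuinely has $a\notin\bigcup V(\theta')$, so the perturbation works. As written, however, the proof does not go through.
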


\begin{proof}
By Lemma \ref{two-to-one}, $\tau$ contains finitely many
fd-paths. Assume there are $k\ge 0$ such paths. We begin by splitting
on each of these paths to obtain $k$ new tracks which we label
$\theta_1,\dots, \theta_k$. If $a$ is in the complement of $\cup
V(\theta_i)$ then the corollary follows from Proposition \ref{ending
  laminations} applied to a sequence of laminations in $V(\tau)$
converging to $a$ and not contained in proper rational planes. If not
$a \in S(\theta_i)$ for some $i$. If $\theta_i$ is small then
$d(B(\theta_i), a) \le 2$ and $d(B(\theta_i), B(\tau))$ is uniformly
bounded since an fd-path is at worst two-to-one by Lemma
\ref{two-to-one}. Therefore $d(B(\tau),a)$ is uniformly bounded.

If $\theta_i$ is large then in it is connected and, by Lemma \ref{fd
  splits}, orientable. As in the previous paragraph we split along all
fd-paths to get a collection of tracks $\theta^i_1, \dots,
\theta^i_{k_i}$. Since $\theta_i$ is orientable, Lemma \ref{fd splits}
implies that the $\theta^i_j$ are disconnected and hence small. If $a
\in \theta^i_j$ for some $j$ then $a$ is uniformly close to
$B(\theta^i_j)$, and therefore to $B(\tau)$, as above. If not, we
again apply Proposition \ref{ending laminations}.
\end{proof}

\end{appendices}
\bibliography{./ref}
\end{document}